\providecommand{\U}[1]{\protect \rule{.1in}{.1in}}
\newtheorem{theorem}{Theorem}[section]
\newtheorem{definition}[theorem]{Definition}
\newtheorem{example}[theorem]{Example}
\newtheorem{lemma}[theorem]{Lemma}
\newtheorem{proposition}[theorem]{Proposition}
\newtheorem{remark}[theorem]{Remark}
\newenvironment{proof}[1][Proof]{\noindent \textbf{#1.} }{\  $\Box$}
\numberwithin{equation}{section}
\begin{document}

\title{Maximum principle for stochastic optimal control problem under convex expectation}
\author{Xiaojuan Li\thanks{Department of Mathematics, Qilu Normal University, Jinan
250200, China. lxj110055@126.com. }
\and Mingshang Hu \thanks{Zhongtai Securities Institute for Financial Studies,
Shandong University, Jinan, Shandong 250100, China. humingshang@sdu.edu.cn.
Research supported by NSF (No. 12326603, 11671231).} }
\maketitle

\textbf{Abstract}. In this paper, we study a stochastic optimal control
problem under a type of consistent convex expectation dominated by
$G$-expectation. By the separation theorem for convex sets, we get the
representation theorems for this convex expectation and conditional convex
expectation. Based on these results, we obtain the variational equation for
cost functional by weak convergence and discretization methods. Furthermore,
we establish the maximum principle which is sufficient under usual convex
assumptions. Finally, we study the linear quadratic control problem by using
the obtained maximum principle.

{\textbf{Key words}. }maximum principle, backward stochastic differential
equation, $G$-expectation, convex expectation

\textbf{AMS subject classifications.} 93E20, 60H10, 35K15

\addcontentsline{toc}{section}{\hspace*{1.8em}Abstract}

\section{Introduction}

In economics and finance, there is a great deal of research on volatility
uncertainty (see \cite{DM, DK, EJ-2, FPW, MPZ, Peng2004} and the references
therein), which is characterized by a family of nondominated probability
measures. In particular, Peng \cite{Peng2004} proposed a consistent nonlinear
expectation method for the first time. On the basis of this work, Peng
\cite{P07a, P08a} established the theory of $G$-expectation $(\mathbb{\hat{E}%
}_{t})_{t\geq0}$, which contains the results of stochastic differential
equation driven by $G$-Brownian motion $B=(B^{1},\ldots,B^{d})^{T}$ ($G$-SDE)
and can be used to deal with problems involving volatility uncertainty. The
representation theorems for $G$-expectation $\mathbb{\hat{E}}$ and conditional
$G$-expectation $\mathbb{\hat{E}}_{t}$ were first obtained in \cite{DHP11} and
\cite{STZ} respectively. Hu et al. \cite{HJPS1, HJPS} established the theory
of backward stochastic differential equation driven by $G$-Brownian motion
($G$-BSDE). By the different formulation and method, Soner et al. \cite{STZ11}
established the theory of a new type of fully nonlinear BSDE, called 2BSDE.
For the latest developments in $G$-BSDE and 2BSDE, the reader may refer to
\cite{HJL1, HYH, HTW, LRT, PZ} and the references therein.

Under $G$-expectation $(\mathbb{\hat{E}}_{t})_{t\geq0}$ (sublinear
expectation), Hu and Ji \cite{HJ1} studied the following forward-backward
control system:
\begin{equation}
\left \{
\begin{array}
[c]{rl}%
dX_{t}^{u}= & b(t,X_{t}^{u},u_{t})dt+\sum_{i,j=1}^{d}h^{ij}(t,X_{t}^{u}%
,u_{t})d\langle B^{i},B^{j}\rangle_{t}+\sum_{i=1}^{d}\sigma^{i}(t,X_{t}%
^{u},u_{t})dB_{t}^{i},\\
X_{0}^{u}= & x_{0}\in \mathbb{R}^{n},
\end{array}
\right.  \label{new-e1}%
\end{equation}%
\begin{equation}
\left \{
\begin{array}
[c]{rl}%
dY_{t}^{u}= & -f(t,X_{t}^{u},Y_{t}^{u},Z_{t}^{u},u_{t})dt-\sum_{i,j=1}%
^{d}g^{ij}(t,X_{t}^{u},Y_{t}^{u},Z_{t}^{u},u_{t})d\langle B^{i},B^{j}%
\rangle_{t}\\
& +Z_{t}^{u}dB_{t}+dK_{t}^{u},\\
Y_{T}^{u}= & \Phi(X_{T}^{u}),
\end{array}
\right.  \label{new-e2}%
\end{equation}
where the control domain $U$ is a given nonempty convex set of $\mathbb{R}%
^{m}$, and a control $u$ is admissible means that the above forward-backward
SDE driven by $G$-Brownian motion has a unique solution with some
integrability. The cost functional is defined by $J(u):=Y_{0}^{u}$. The
optimal control problem is to minimize $J(u)$ over $\mathcal{U}[0,T]$, where
$\mathcal{U}[0,T]$ is the set of admissible controls. By the linearization and
weak convergence methods, the variational inequality and maximum principle
were obtained on a reference probability $P^{\ast}\in \mathcal{P}$ in
\cite{HJ1}, where $\mathcal{P}$ represents $G$-expectation $\mathbb{\hat{E}}$.

In this paper, we consider the following BSDE under a consistent convex
$\tilde{G}$-expectation $(\mathbb{\tilde{E}}_{t})_{t\geq0}$, which is
dominated by $G$-expectation $(\mathbb{\hat{E}}_{t})_{t\geq0}$,
\begin{equation}
Y_{t}^{u}=\mathbb{\tilde{E}}_{t}\left[  \Phi(X_{T}^{u})+\int_{t}^{T}%
f(s,X_{s}^{u},Y_{s}^{u},u_{s})ds+\sum_{i,j=1}^{d}\int_{t}^{T}g^{ij}%
(s,X_{s}^{u},Y_{s}^{u},u_{s})d\langle B^{i},B^{j}\rangle_{s}\right]  .
\label{new-e4}%
\end{equation}
The reasons for considering convex expectations have been explained in detail
in F\"{o}llmer and Schied \cite{FS}. In particular, the wellposedness of BSDE
under convex $\tilde{G}$-expectation containing $(Y,Z,K)$ remains open. Our
control system consists of (\ref{new-e1}) and (\ref{new-e4}). The cost
functional is defined by $J(u):=Y_{0}^{u}$, and the optimal control problem is
to minimize $J(u)$ over $\mathcal{U}[0,T]$.

In order to derive the variational equation for cost functional by weak
convergence method, we need the representation theorem for convex $\tilde{G}%
$-expectation which is not given in the literature. So we first obtain the
representation theorem for $\mathbb{\tilde{E}}$ by the separation theorem for
convex sets, and then further obtain the representation theorem for
$\mathbb{\tilde{E}}_{t}$. On the other hand, since the linear BSDE under
convex $\tilde{G}$-expectation has no explicit solution (see Remark 4.9 in
\cite{HSW}), the standard linearization method fails under convex $\tilde{G}%
$-expectation. To overcome this difficulty, we obtain two important properties
(Propositions \ref{new-pro-3} and \ref{new-pro-4}) about the representation of
consistent convex $\tilde{G}$-expectation. On this basis, the key to get the
variational equation for cost functional is to prove inequality (\ref{ne-25})
by applying the linearization method under a probability. By a new
discretization method, we get the inequality (\ref{ne-25}). Furthermore, we
obtain the variational inequality and maximum principle under a reference probability.

This paper is organized as follows. In Section 2, we recall some basic results
of $G$-expectation and give the formulation of the control problem under
convex $\tilde{G}$-expectation. In Section 3, we obtain the representation
theorems for $\tilde{G}$-expectation and conditional $\tilde{G}$-expectation.
In Section 4, we obtain the variational inequality and maximum principle for
our control problem. In Section 5, we study the linear quadratic control
problem by using the obtained maximum principle.

\section{Preliminaries and formulation of the control problem}

We recall some basic notions and results of $G$-expectation. The readers may
refer to \cite{P2019, HJPS1, HJPS} for more details.

For each fixed $T>0$, let $\Omega_{T}=C_{0}([0,T];\mathbb{R}^{d})$ be the
space of $\mathbb{R}^{d}$-valued continuous functions on $[0,T]$ with
$\omega_{0}=0$. The canonical process $B_{t}(\omega):=\omega_{t}$, for
$\omega \in \Omega_{T}$ and $t\in \lbrack0,T]$, denote $B=(B^{1},\ldots
,B^{d})^{T}$. For each given $t\in \lbrack0,T]$, set%
\[
Lip(\Omega_{t}):=\{ \varphi(B_{t_{1}},B_{t_{2}},\ldots,B_{t_{N}}):N\geq
1,t_{1}<\cdots<t_{N}\leq t,\varphi \in C_{b.Lip}(\mathbb{R}^{d\times N})\},
\]
where $C_{b.Lip}(\mathbb{R}^{d\times N})$ denotes the space of bounded
Lipschitz functions on $\mathbb{R}^{d\times N}$.

For each given monotonic and sublinear function $G:\mathbb{S}_{d}%
\rightarrow \mathbb{R}$, where $\mathbb{S}_{d}$ denotes the set of $d\times d$
symmetric matrices, Peng \cite{P07a, P08a} constructed a consistent sublinear
expectation space $(\Omega_{T},Lip(\Omega_{T}),\mathbb{\hat{E}},(\mathbb{\hat
{E}}_{t})_{t\in \lbrack0,T]})$, called $G$-expectation space. Moreover, for
each given $\tilde{G}:\mathbb{S}_{d}\rightarrow \mathbb{R}$ satisfying%
\[
\left \{
\begin{array}
[c]{l}%
\tilde{G}(0)=0,\\
\tilde{G}(A_{1})\leq \tilde{G}(A_{2})\text{ if }A_{1}\leq A_{2},\\
\tilde{G}(\alpha A_{1}+(1-\alpha)A_{2})\leq \alpha \tilde{G}(A_{1}%
)+(1-\alpha)\tilde{G}(A_{2})\text{ for }\alpha \in \lbrack0,1],\\
\tilde{G}(A_{1})-\tilde{G}(A_{2})\leq G(A_{1}-A_{2}),
\end{array}
\right.
\]
Peng \cite{P2019} constructed a consistent convex expectation space
$(\Omega_{T},Lip(\Omega_{T}),\mathbb{\tilde{E}},(\mathbb{\tilde{E}}_{t}%
)_{t\in \lbrack0,T]})$, called $\tilde{G}$-expectation space. For example,%
\[
\mathbb{\tilde{E}}_{t_{N-1}}[\varphi(B_{t_{1}},\ldots,B_{t_{N-1}},B_{t_{N}%
}-B_{t_{N-1}})]=\psi(B_{t_{1}},\ldots,B_{t_{N-1}}),
\]
where $\psi(x_{1},\ldots,x_{N-1})=u^{\varphi(x_{1},\ldots,x_{N-1},\cdot
)}(t_{N}-t_{N-1},0)$ and $u^{\varphi(x_{1},\ldots,x_{N-1},\cdot)}$ is the
viscosity solution (see \cite{CIP}) of the following PDE:
\[
\left \{
\begin{array}
[c]{l}%
\partial_{t}u-\tilde{G}(D_{x}^{2}u)=0,\\
u(0,x)=\varphi(x_{1},\ldots,x_{N-1},x).
\end{array}
\right.
\]
Specially, $\tilde{G}$-expectation is dominated by $G$-expectation in the
following sense:%
\begin{equation}
\mathbb{\tilde{E}}_{t}[X_{1}]-\mathbb{\tilde{E}}_{t}[X_{2}]\leq \mathbb{\hat
{E}}_{t}[X_{1}-X_{2}]\text{ for }X_{1},X_{2}\in Lip(\Omega_{T}),\text{ }%
t\in \lbrack0,T]. \label{ne-1}%
\end{equation}

For each $t\in \lbrack0,T]$ and $p\geq1$, denote by $L_{G}^{p}(\Omega_{t})$ the
completion of $Lip(\Omega_{t})$ under the norm $||X||_{L_{G}^{p}%
}:=(\mathbb{\hat{E}}[|X|^{p}])^{1/p}$. $\mathbb{\hat{E}}_{t}$ and
$\mathbb{\tilde{E}}_{t}$ can be continuously extended to $L_{G}^{1}(\Omega
_{T})$ under the norm $||\cdot||_{L_{G}^{1}}$, and the domination relation
(\ref{ne-1}) still holds.

The following result is the representation theorem for $G$-expectation.

\begin{theorem}
\label{th-re-1}(\cite{DHP11, HP09}) There exists a unique weakly compact and
convex set of probability measures $\mathcal{P}$ on $(\Omega_{T}%
,\mathcal{F}_{T})$ such that%
\[
\mathbb{\hat{E}}[X]=\sup_{P\in \mathcal{P}}E_{P}[X]\text{ for all }X\in
L_{G}^{1}(\Omega_{T}),
\]
where $\mathcal{F}_{t}=\mathcal{B}(\Omega_{t})=\sigma(\{B_{s}:s\leq t\})$ for
$t\leq T$.
\end{theorem}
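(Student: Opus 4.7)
The plan is to prove existence by constructing $\mathcal{P}$ through a combination of Hahn--Banach separation and a tightness/moment argument, and then to prove uniqueness by separating weakly compact convex sets of measures. Since $\mathbb{\hat{E}}$ on $Lip(\Omega_T)$ is sublinear, monotone, constant-preserving, and satisfies $|\mathbb{\hat{E}}[X]-\mathbb{\hat{E}}[Y]|\leq \|X-Y\|_\infty$, it extends uniquely to a sublinear, monotone functional on $C_b(\Omega_T)$ (using density of $Lip(\Omega_T)$ in $C_b(\Omega_T)$ on the Polish space $\Omega_T$, endowed with the uniform norm on compacts).

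First I would apply Hahn--Banach on $C_b(\Omega_T)$: for each $X_0\in C_b(\Omega_T)$, there is a linear functional $\ell$ with $\ell\leq \mathbb{\hat{E}}$ and $\ell(X_0)=\mathbb{\hat{E}}[X_0]$. Monotonicity of $\mathbb{\hat{E}}$ forces $\ell\geq 0$ on nonnegative functions, and $\ell(1)=1$. To realize such $\ell$ as a bona fide countably additive probability measure rather than a merely finitely additive mean, I would invoke tightness: from the moment estimate $\mathbb{\hat{E}}[|B_t-B_s|^p]\leq C_p|t-s|^{p/2}$ (which follows from the definition of $\mathbb{\hat{E}}$ via the $G$-heat equation) and Kolmogorov's criterion, one obtains an increasing sequence of compact subsets $K_n\subset \Omega_T$ with $\mathbb{\hat{E}}[(1-d(\cdot,K_n))^+]\to 1$. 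Any Hahn--Banach functional $\ell$ dominated by $\mathbb{\hat{E}}$ therefore concentrates on $\bigcup_n K_n$, and by a Daniell--Stone or Riesz representation argument on compacts it corresponds to a Radon probability measure $P_\ell$ on $(\Omega_T,\mathcal{F}_T)$.

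I would then set $\mathcal{P}$ to be the weak closure of the convex hull of the collection $\{P_\ell\}$. Prohorov's theorem together with the uniform tightness coming from the moment estimates yields weak compactness, and by construction $\mathbb{\hat{E}}[X]=\sup_{P\in\mathcal{P}}E_P[X]$ on $C_b(\Omega_T)$, hence on $Lip(\Omega_T)$. The extension of this identity from $Lip(\Omega_T)$ to $L^1_G(\Omega_T)$ is obtained by a capacity/density argument: for $X\in L^1_G(\Omega_T)$ pick $X_n\in Lip(\Omega_T)$ with $\mathbb{\hat{E}}[|X_n-X|]\to 0$, note that $|\sup_P E_P[X_n]-\sup_P E_P[X]|\leq \sup_P E_P[|X_n-X|]\leq \mathbb{\hat{E}}[|X_n-X|]\to 0$, and pass to the limit on both sides.

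For uniqueness, suppose $\mathcal{P}'$ is another weakly compact convex set enjoying the representation. If there were $P_0\in\mathcal{P}\setminus\mathcal{P}'$, then by the Hahn--Banach separation theorem in the locally convex space of finite signed measures (with the topology induced by $C_b(\Omega_T)$) there would exist $X\in C_b(\Omega_T)\subset L^1_G(\Omega_T)$ with $E_{P_0}[X]>\sup_{Q\in\mathcal{P}'}E_Q[X]=\mathbb{\hat{E}}[X]\geq E_{P_0}[X]$, a contradiction; by symmetry $\mathcal{P}=\mathcal{P}'$. The main obstacle is the passage from Hahn--Banach functionals on $C_b(\Omega_T)$ to countably additive probability measures, where tightness via $G$-Brownian moment estimates is essential, followed by the capacity-based extension from $Lip(\Omega_T)$ to the full space $L^1_G(\Omega_T)$.
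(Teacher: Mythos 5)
This theorem is not proved in the paper: it is quoted verbatim from \cite{DHP11, HP09}, so there is no in-paper argument to compare against. Your outline is essentially the standard proof from those references (Hahn--Banach to produce dominated linear functionals, tightness from the $G$-Brownian moment estimates to upgrade them to countably additive measures, Prohorov for weak compactness, density for the passage from $Lip(\Omega_T)$ to $L_G^1(\Omega_T)$, and separation of convex sets for uniqueness), and the overall architecture is sound.

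Two steps, however, do not work as written. First, the tightness statement $\mathbb{\hat{E}}[(1-d(\cdot,K_n))^+]\to 1$ is the wrong inequality: since $\ell\leq\mathbb{\hat{E}}$ only bounds $\ell$ from above, knowing that the \emph{upper} expectation of the bump functions tends to $1$ gives no control on $\ell(K_n^c)$. What you need is that the upper expectation of the complement vanishes, i.e. $\mathbb{\hat{E}}\left[1-(1-d(\cdot,K_n))^+\right]\to 0$ (equivalently, the lower expectation of the bumps tends to $1$); only then does $\ell(1-\phi_n)\leq\mathbb{\hat{E}}[1-\phi_n]\to 0$ hold uniformly over all dominated functionals. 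The Kolmogorov-type modulus-of-continuity estimate does deliver this stronger form, so the slip is correctable, but as stated the inference ``therefore $\ell$ concentrates on $\bigcup_n K_n$'' is broken. Second, $Lip(\Omega_T)$ is \emph{not} dense in $C_b(\Omega_T)$ for the uniform norm on $\Omega_T=C_0([0,T];\mathbb{R}^d)$ (cylinder Lipschitz functions only approximate uniformly on compact subsets of path space), so the claimed ``unique extension by density'' of $\mathbb{\hat{E}}$ to $C_b(\Omega_T)$ is not available in the form you invoke; in \cite{DHP11} this is handled by first establishing tightness and then showing $C_b(\Omega_T)\subset L_G^1(\Omega_T)$ via the capacity norm, which is also what your uniqueness argument implicitly needs when it separates with a test function $X\in C_b(\Omega_T)$. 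With those two repairs the proof goes through and coincides with the cited one.
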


The capacity associated with $\mathcal{P}$ is defined by%
\[
c(A):=\sup_{P\in \mathcal{P}}P(A)\text{ for }A\in \mathcal{F}_{T}.
\]
A set $A\in \mathcal{F}_{T}$ is polar if $c(A)=0$. A property holds
\textquotedblleft quasi-surely" (q.s. for short) if it holds outside a polar
set, and we do not distinguish two random variables $X$ and $Y$ if $X=Y$ q.s.

\begin{definition}
Let $M_{G}^{0}(0,T)$ be the space of simple processes in the following form:
for each integer $k\geq1$ and $0=t_{0}<\cdots<t_{k}=T$,%
\[
\eta_{t}=\sum_{i=0}^{k-1}\xi_{i}I_{[t_{i},t_{i+1})}(t),
\]
where $\xi_{i}\in Lip(\Omega_{t_{i}})$ for $i=0,1,\ldots,k-1$.
\end{definition}

For each fixed $p\geq1$, denote by $M_{G}^{p}(0,T)$ the completion of
$M_{G}^{0}(0,T)$ under the norm $||\eta||_{M_{G}^{p}}:=\left(  \mathbb{\hat
{E}}[\int_{0}^{T}|\eta_{t}|^{p}dt]\right)  ^{1/p}$. For each $\eta^{i}\in
M_{G}^{2}(0,T)$, $i=1,\ldots,d$, denote $\eta=(\eta^{1},\ldots,\eta^{d}%
)^{T}\in M_{G}^{2}(0,T;\mathbb{R}^{d})$, the $G$-It\^{o} integral $\int
_{0}^{T}\eta_{t}^{T}dB_{t}$ is well defined.

Now we give the formulation of the control problem. Let the control domain $U$
be a given nonempty convex set of $\mathbb{R}^{m}$. The set of admissible
controls is denoted by%
\[
\mathcal{U}[0,T]=M_{G}^{2}(0,T;U):=\{u\in M_{G}^{2}(0,T;\mathbb{R}^{m}%
):u_{t}\in U\text{ for }t\in \lbrack0,T]\}.
\]
Consider the followng forward-backward control system: for each $u\in
\mathcal{U}[0,T]$,%

\begin{equation}
\left \{
\begin{array}
[c]{rl}%
dX_{t}^{u}= & b(t,X_{t}^{u},u_{t})dt+\sum_{i,j=1}^{d}h^{ij}(t,X_{t}^{u}%
,u_{t})d\langle B^{i},B^{j}\rangle_{t}+\sum_{i=1}^{d}\sigma^{i}(t,X_{t}%
^{u},u_{t})dB_{t}^{i},\\
X_{0}^{u}= & x_{0}\in \mathbb{R}^{n},
\end{array}
\right.  \label{ne-2}%
\end{equation}%
\begin{equation}
Y_{t}^{u}=\mathbb{\tilde{E}}_{t}\left[  \Phi(X_{T}^{u})+\int_{t}^{T}%
f(s,X_{s}^{u},Y_{s}^{u},u_{s})ds+\sum_{i,j=1}^{d}\int_{t}^{T}g^{ij}%
(s,X_{s}^{u},Y_{s}^{u},u_{s})d\langle B^{i},B^{j}\rangle_{s}\right]  ,
\label{ne-3}%
\end{equation}
where $t\in \lbrack0,T]$, $\langle B\rangle=(\langle B^{i},B^{j}\rangle
)_{i,j=1}^{d}$ is the quadratic variation of $B$, for $i$, $j=1,\ldots,d,$
\[%
\begin{array}
[c]{l}%
b,h^{ij},\sigma^{i}:[0,T]\times \Omega_{T}\times \mathbb{R}^{n}\times
U\rightarrow \mathbb{R}^{n},\\
f,g^{ij}:[0,T]\times \Omega_{T}\times \mathbb{R}^{n}\times \mathbb{R}\times
U\rightarrow \mathbb{R},\\
\Phi:\Omega_{T}\times \mathbb{R}^{n}\rightarrow \mathbb{R}.
\end{array}
\]
In this paper, we need the following assumptions:

\begin{description}
\item[(A1)] For $(x,y,v)\in \mathbb{R}^{n}\times \mathbb{R}\times U$ and $i$,
$j=1,\ldots,d$, $(b(t,x,v))_{t\in \lbrack0,T]}$, $(h^{ij}(t,x,v))_{t\in
\lbrack0,T]}$, $(\sigma^{i}(t,x,v))_{t\in \lbrack0,T]}\in M_{G}^{2}%
(0,T;\mathbb{R}^{n})$, $(f(t,x,y,v))_{t\in \lbrack0,T]}$, $(g^{ij}%
(t,x,y,v))_{t\in \lbrack0,T]}\in M_{G}^{1}(0,T)$, $\Phi(x)\in L_{G}^{1}%
(\Omega_{T})$.

\item[(A2)] For $i$, $j=1,\ldots,d$, $b$, $h^{ij}$, $\sigma^{i}$, $f$,
$g^{ij}$ and $\Phi$ are differentiable in $(x,y,v)$.

\item[(A3)] For $i$, $j=1,\ldots,d$, the derivatives of $b$, $h^{ij}$,
$\sigma^{i}$ in $(x,v)$ and the derivatives of $f$, $g^{ij}$ in $y$ are bounded.

\item[(A4)] There exists a constant $L>0$ such that for $i$, $j=1,\ldots,d$,
$t\leq T$, $(x,y,v)\in \mathbb{R}^{n}\times \mathbb{R}\times U$,%
\[
(|f_{x}|+|f_{v}|+|g_{x}^{ij}|+|g_{v}^{ij}|)(t,x,y,v)+|\Phi_{x}(x)|\leq
L(1+|x|+|v|).
\]

\item[(A5)] There exists a modulus of continuity $\bar{\omega}:[0,\infty
)\rightarrow \lbrack0,\infty)$ such that for each $t\leq T$, $x_{1}$, $x_{2}%
\in \mathbb{R}^{n}$, $y_{1}$, $y_{2}\in \mathbb{R}$, $v_{1}$, $v_{2}\in U$,%
\[
|\phi(t,x_{1},y_{1},v_{1})-\phi(t,x_{2},y_{2},v_{2})|\leq \bar{\omega}%
(|x_{1}-x_{2}|+|y_{1}-y_{2}|+|v_{1}-v_{2}|),
\]
where $\phi$ is the derivative of $b$, $h^{ij}$, $\sigma^{i}$, $f$, $g^{ij}$,
$\Phi$ in $(x,y,v)$ for $i$, $j=1,\ldots,d$.
\end{description}

Under the assumptions (A1)-(A4), for each given $u\in \mathcal{U}[0,T]$, the
above forward-backward control system\ (\ref{ne-2})-(\ref{ne-3}) has a unique
solution $(X^{u},Y^{u})\in M_{G}^{2}(0,T;\mathbb{R}^{n})\times M_{G}^{1}(0,T)$
(see \cite{P2019}).

The cost functional is defined by%
\begin{equation}
J(u):=Y_{0}^{u}. \label{nne-4}%
\end{equation}
The related stochastic optimal control problem is to minimize $J(u)$ over
$\mathcal{U}[0,T]$. If there exists a $u^{\ast}\in \mathcal{U}[0,T]$ such that%
\begin{equation}
J(u^{\ast})=\min_{u\in \mathcal{U}[0,T]}J(u), \label{ne-4}%
\end{equation}
we call $u^{\ast}$ an optimal control. In order to obtain the maximum
principle, we first establish the representation theorem for $\tilde{G}$-expectation.

\section{Representation theorem for $\tilde{G}$-expectation}

Since $\mathcal{P}$ in Theorem \ref{th-re-1} is nondominated, i.e., we cannot
find a probability measure $Q$ on $(\Omega_{T},\mathcal{B}(\Omega_{T}))$ such
that $P\ll Q$ for any $P\in \mathcal{P}$, the representation theorem for convex
expectation $\mathbb{\tilde{E}}$ is different from that in \cite{FS}. For each
$P\in \mathcal{P}$ and $t\leq T$, define%
\begin{equation}
\alpha_{0}^{t}(P):=\sup_{Y\in L_{G}^{1}(\Omega_{t})}\left(  E_{P}%
[Y]-\mathbb{\tilde{E}}[Y]\right)  . \label{ne-5}%
\end{equation}
Since $E_{P}[0]-\mathbb{\tilde{E}}[0]=0$, we have $\alpha_{0}^{t}(P)\in
\lbrack0,\infty]$.

\begin{theorem}
\label{th-re-2}Let $\mathcal{P}$ be given in Theorem \ref{th-re-1}. Then, for
each fixed $t\leq T$, we have%
\begin{equation}
\mathbb{\tilde{E}}[X]=\sup_{P\in \mathcal{P}}\left(  E_{P}[X]-\alpha_{0}%
^{t}(P)\right)  ,\text{ }\forall X\in L_{G}^{1}(\Omega_{t}), \label{ne-8}%
\end{equation}
where $\alpha_{0}^{t}(P)$ is defined in (\ref{ne-5}).
\end{theorem}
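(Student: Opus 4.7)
The inequality $\mathbb{\tilde{E}}[X]\ge\sup_{P\in\mathcal P}(E_{P}[X]-\alpha_{0}^{t}(P))$ is built into the definition (\ref{ne-5}): for every $P\in\mathcal P$, $\alpha_{0}^{t}(P)\ge E_{P}[X]-\mathbb{\tilde{E}}[X]$. The content is the reverse inequality, which I will obtain by a Fenchel--Moreau / Hahn--Banach argument. Using the domination (\ref{ne-1}) with $X_{2}=0$ and $\mathbb{\tilde{E}}[0]=0$, I first note $|\mathbb{\tilde{E}}[Y_{1}]-\mathbb{\tilde{E}}[Y_{2}]|\le\mathbb{\hat{E}}[|Y_{1}-Y_{2}|]=\|Y_{1}-Y_{2}\|_{L_{G}^{1}}$, so $\mathbb{\tilde{E}}$ is continuous and convex on the Banach space $L_{G}^{1}(\Omega_{t})$ and its epigraph $A=\{(Y,r)\in L_{G}^{1}(\Omega_{t})\times\mathbb R:r\ge\mathbb{\tilde{E}}[Y]\}$ is closed convex.

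Fix $X\in L_{G}^{1}(\Omega_{t})$ and $\varepsilon>0$. Since $(X,\mathbb{\tilde{E}}[X]-\varepsilon)\notin A$, the Hahn--Banach strict separation theorem furnishes $\ell\in(L_{G}^{1}(\Omega_{t}))^{*}$, $\lambda\in\mathbb R$ and $c\in\mathbb R$ with $\ell(Y)+\lambda r\ge c$ on $A$ and $\ell(X)+\lambda(\mathbb{\tilde{E}}[X]-\varepsilon)<c$. Sending $r\to+\infty$ inside $A$ forces $\lambda\ge 0$, and taking $(X,\mathbb{\tilde{E}}[X])\in A$ rules out $\lambda=0$. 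Setting $\mu:=-\ell/\lambda$ and $\alpha:=-c/\lambda$ yields a continuous linear $\mu$ on $L_{G}^{1}(\Omega_{t})$ with $\alpha\ge 0$ (evaluate at $Y=0$) such that
\[
\mu(Y)-\mathbb{\tilde{E}}[Y]\le\alpha\ \ \forall Y\in L_{G}^{1}(\Omega_{t}),\qquad \mu(X)-\mathbb{\tilde{E}}[X]>\alpha-\varepsilon.
\]
Combining with $\mathbb{\tilde{E}}\le\mathbb{\hat{E}}$ (from (\ref{ne-1})) one has $\mu(Y)\le\mathbb{\hat{E}}[Y]+\alpha$; replacing $Y$ by $nY$ and sending $n\to\infty$, linearity of $\mu$ and positive homogeneity of $\mathbb{\hat{E}}$ upgrade this to $\mu\le\mathbb{\hat{E}}$. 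I then extend $\mu$ via the analytic Hahn--Banach theorem to a functional $\tilde\mu$ on $L_{G}^{1}(\Omega_{T})$ still dominated by $\mathbb{\hat{E}}$. By Theorem~\ref{th-re-1} the sublinear $\mathbb{\hat{E}}$ is the support function of the weakly compact convex set $\{E_{P}:P\in\mathcal P\}$, so a bipolar / strict-separation argument in $(L_{G}^{1}(\Omega_{T}))^{*}$ forces $\tilde\mu=E_{P^{*}}$ for some $P^{*}\in\mathcal P$. Restricting back, $\mu=E_{P^{*}}$ on $L_{G}^{1}(\Omega_{t})$, hence $\alpha_{0}^{t}(P^{*})=\sup_{Y}(E_{P^{*}}[Y]-\mathbb{\tilde{E}}[Y])\le\alpha$ and
\[
E_{P^{*}}[X]-\alpha_{0}^{t}(P^{*})\ \ge\ \mu(X)-\alpha\ >\ \mathbb{\tilde{E}}[X]-\varepsilon.
\]
Taking the supremum over $\mathcal P$ and letting $\varepsilon\downarrow 0$ gives the reverse inequality.

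The genuine obstacle is the identification step, where the abstract Hahn--Banach functional $\tilde\mu$ must be realized as $E_{P^{*}}$ with $P^{*}\in\mathcal P$. Because $L_{G}^{1}(\Omega_{T})$ is not an $L^{1}$ space over a dominated reference measure, classical Riesz representation is unavailable; the entire leverage comes from the weak compactness of $\mathcal P$ in Theorem~\ref{th-re-1}, which makes $\{E_{P}:P\in\mathcal P\}$ a weak-$*$ closed convex set with support function $\mathbb{\hat{E}}$, so the bipolar theorem applies. Everything else is the standard convex-duality pattern, as in \cite{DHP11,HP09,FS}.
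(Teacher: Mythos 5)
Your argument is correct and follows essentially the same route as the paper: separate the epigraph $A$ of $\mathbb{\tilde{E}}$ in $L_{G}^{1}(\Omega_{t})\times\mathbb{R}$ by a continuous linear functional, normalize by the (necessarily positive) coefficient of the real coordinate, use the domination (\ref{ne-1}) together with a scaling argument to show the resulting linear functional is dominated by $\mathbb{\hat{E}}$, and invoke Theorem \ref{th-re-1} to identify it with $E_{P^{\ast}}$ for some $P^{\ast}\in\mathcal{P}$. The one substantive difference is the separation step: the paper separates the \emph{boundary} point $(X,\mathbb{\tilde{E}}[X])$ from the interior $A^{\circ}$ (legitimate because continuity of $\mathbb{\tilde{E}}$ makes $A^{\circ}$ nonempty), which produces a supporting hyperplane at $X$ and hence a measure $Q\in\mathcal{P}$ that \emph{attains} the supremum, $\mathbb{\tilde{E}}[X]=E_{Q}[X]-\alpha_{0}^{t}(Q)$; you instead strictly separate the exterior point $(X,\mathbb{\tilde{E}}[X]-\varepsilon)$ and let $\varepsilon\downarrow 0$, which proves the identity (\ref{ne-8}) but not attainment. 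That loss matters downstream: the paper appeals to the proof of Theorem \ref{th-re-2} to conclude that $\mathcal{P}^{\ast}$ is nonempty in part (3) of Proposition \ref{pro-3}, so with your version nonemptiness would have to be recovered separately, e.g.\ from the weak compactness of $\mathcal{P}$ combined with parts (1)--(2) of that proposition (convexity and weak lower semicontinuity of $\alpha_{0}^{T}$). On the identification step you are in fact more explicit than the paper, which simply asserts the existence of $Q$; your Hahn--Banach extension to $L_{G}^{1}(\Omega_{T})$ followed by weak-$\ast$ separation against the compact convex set $\{E_{P}:P\in\mathcal{P}\}$ is a correct way to fill that in.
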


\begin{proof}
For each fixed $X\in L_{G}^{1}(\Omega_{t})$, by the definition of $\alpha
_{0}^{t}(P)$, we have%
\[
\alpha_{0}^{t}(P)\geq E_{P}[X]-\mathbb{\tilde{E}}[X]\text{ for each }%
P\in \mathcal{P}\text{,}%
\]
which implies
\[
\mathbb{\tilde{E}}[X]\geq \sup_{P\in \mathcal{P}}\left(  E_{P}[X]-\alpha_{0}%
^{t}(P)\right)  .
\]
In the following, we show that there exists a $Q\in \mathcal{P}$ such that
$\mathbb{\tilde{E}}[X]=E_{Q}[X]-\alpha_{0}^{t}(Q)$.

The space $L_{G}^{1}(\Omega_{t})\times \mathbb{R}$ is a Banach space under the
norm $\mathbb{\hat{E}}[|Y|]+|a|$ for $Y\in L_{G}^{1}(\Omega_{t})$ and
$a\in \mathbb{R}$. It is easy to check that%
\[
A=\{(Y,\beta):Y\in L_{G}^{1}(\Omega_{t})\text{, }\beta \geq \mathbb{\tilde{E}%
}[Y]\}
\]
is a closed convex set and the interior of $A$ is
\[
A^{\circ}=\{(Y,\beta):Y\in L_{G}^{1}(\Omega_{t})\text{, }\beta>\mathbb{\tilde
{E}}[Y]\}.
\]
Since $A^{\circ}\cap \{(X,\mathbb{\tilde{E}}[X])\}=\emptyset$, by the
separation theorem for convex sets (see Theorem 3.7 in Chapter IV in
\cite{Con}), there exists a non-zero continuous linear functional $L$ on
$L_{G}^{1}(\Omega_{t})\times \mathbb{R}$ such that%
\begin{equation}
L(Y,\beta)\geq L(X,\mathbb{\tilde{E}}[X])\text{ for any }(Y,\beta)\in A.
\label{ne-6}%
\end{equation}
Note that $L(Y,\beta)=L(Y,0)+\beta L(0,1)$, $L(-Y,0)=-L(Y,0)$ and $\beta
\geq \mathbb{\tilde{E}}[Y]$, we deduce $L(0,1)>0$ by (\ref{ne-6}). Thus we
obtain%
\[
L(Y,\mathbb{\tilde{E}}[Y])=L(Y,0)+\mathbb{\tilde{E}}[Y]L(0,1)\geq
L(X,\mathbb{\tilde{E}}[X])\text{ for any }Y\in L_{G}^{1}(\Omega_{t}),
\]
which implies%
\[
\mathbb{\tilde{E}}[Y]\geq(L(0,1))^{-1}[-L(Y,0)+L(X,\mathbb{\tilde{E}%
}[X])]\text{ for any }Y\in L_{G}^{1}(\Omega_{t}).
\]
Set $b=(L(0,1))^{-1}L(X,\mathbb{\tilde{E}}[X])\in \mathbb{R}$ and define
$E:L_{G}^{1}(\Omega_{t})\rightarrow \mathbb{R}$ by%
\[
E[Y]:=-(L(0,1))^{-1}L(Y,0)\text{ for any }Y\in L_{G}^{1}(\Omega_{t}).
\]
It is clear that $E$ is a linear functional on $L_{G}^{1}(\Omega_{t})$ and%
\begin{equation}
\mathbb{\tilde{E}}[X]=E[X]+b\text{, }\mathbb{\tilde{E}}[Y]\geq E[Y]+b\text{
for any }Y\in L_{G}^{1}(\Omega_{t}). \label{ne-7}%
\end{equation}
Now we prove that $E$ is a linear expectation on $L_{G}^{1}(\Omega_{t})$ and
dominated by $\mathbb{\hat{E}}$. Since
\[
\lambda=\mathbb{\tilde{E}}[\lambda]\geq \lambda E[1]+b\text{ and }%
-\lambda=\mathbb{\tilde{E}}[-\lambda]\geq-\lambda E[1]+b
\]
for each $\lambda>0$, we get $E[1]=1$ by taking $\lambda \rightarrow \infty$.
For each $Y\geq0$, noting that $0\geq \mathbb{\tilde{E}}[-\lambda
Y]\geq-\lambda E[Y]+b$ for each $\lambda>0$, then we obtain $E[Y]\geq0$ by
taking $\lambda \rightarrow \infty$. Since $\mathbb{\tilde{E}}$ is dominated by
$\mathbb{\hat{E}}$, we get%
\[
\lambda \mathbb{\hat{E}}[Y]=\mathbb{\hat{E}}[\lambda Y]\geq \mathbb{\tilde{E}%
}[\lambda Y]\geq \lambda E[Y]+b
\]
for each $\lambda>0$ and $Y\in L_{G}^{1}(\Omega_{t})$, which implies
$E[Y]\leq \mathbb{\hat{E}}[Y]$ for each $Y\in L_{G}^{1}(\Omega_{t})$ by taking
$\lambda \rightarrow \infty$. Thus there exists a $Q\in \mathcal{P}$ such that
$E[Y]=E_{Q}[Y]$ for each $Y\in L_{G}^{1}(\Omega_{t})$. By (\ref{ne-5}) and
(\ref{ne-7}), it is easy to verify that $\alpha_{0}^{t}(Q)=-b$. Thus we obtain
(\ref{ne-8}).
\end{proof}

\begin{remark}
For each fixed $X\in L_{G}^{1}(\Omega_{t})$ with $t<T$. If $P_{1}%
\in \mathcal{P}$ satisfies $\mathbb{\tilde{E}}[X]=E_{P_{1}}[X]-\alpha_{0}%
^{t}(P_{1})$, then we cannot deduce $\alpha_{0}^{T}(P_{1})=\alpha_{0}%
^{t}(P_{1})$ by (\ref{ne-5}). But if $P_{2}\in \mathcal{P}$ satisfies
$\mathbb{\tilde{E}}[X]=E_{P_{2}}[X]-\alpha_{0}^{T}(P_{2})$, then we can deduce
$\alpha_{0}^{t}(P_{2})=\alpha_{0}^{T}(P_{2})$ by (\ref{ne-5}).
\end{remark}

Based on the above representation theorem, we have the following results.

\begin{proposition}
\label{pro-3}Let $\{X_{n}:n\geq1\}$ be a sequence in $L_{G}^{1}(\Omega_{T})$
such that $\mathbb{\hat{E}}[|X_{n}-X|]\rightarrow0$ as $n\rightarrow \infty$.
Set%
\[
\mathcal{P}^{\ast}=\{P\in \mathcal{P}:E_{P}[X]-\alpha_{0}^{T}(P)=\mathbb{\tilde
{E}}[X]\},
\]%
\[
\mathcal{P}_{n}^{\ast}=\{P\in \mathcal{P}:E_{P}[X_{n}]-\alpha_{0}%
^{T}(P)=\mathbb{\tilde{E}}[X_{n}]\}.
\]

\begin{description}
\item[(1)] For each given $P_{1}$, $P_{2}\in \mathcal{P}$ and $\lambda
\in \lbrack0,1]$, we have $\alpha_{0}^{T}(\lambda P_{1}+(1-\lambda)P_{2}%
)\leq \lambda \alpha_{0}^{T}(P_{1})+(1-\lambda)\alpha_{0}^{T}(P_{2})$.

\item[(2)] Let $P_{n}\in \mathcal{P}$, $n\geq1$, converge weakly to
$P\in \mathcal{P}$. Then $\alpha_{0}^{T}(P)\leq \underset{n\rightarrow \infty
}{\lim \inf}\alpha_{0}^{T}(P_{n})$.

\item[(3)] $\mathcal{P}^{\ast}$ is a nonempty weakly compact and convex set.

\item[(4)] Let $P_{n}\in \mathcal{P}_{n}^{\ast}$, $n\geq1$, converge weakly to
$P\in \mathcal{P}$. Then $P\in \mathcal{P}^{\ast}$.
\end{description}
\end{proposition}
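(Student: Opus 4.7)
The plan is to prove the four items in order, using throughout the dual formula $\alpha_0^T(P)=\sup_{Y\in L_G^1(\Omega_T)}(E_P[Y]-\mathbb{\tilde{E}}[Y])$ from (\ref{ne-5}) and one auxiliary fact: \emph{for each $Y\in L_G^1(\Omega_T)$ the map $P\mapsto E_P[Y]$ is weakly continuous on $\mathcal{P}$}. For $Y\in Lip(\Omega_T)$ this is immediate because $Y$ is a bounded continuous cylinder function on $\Omega_T$; for general $Y\in L_G^1(\Omega_T)$, I would approximate $Y$ in the $\|\cdot\|_{L_G^1}$-norm by $Y_k\in Lip(\Omega_T)$ and pass to the limit using the bound $|E_Q[Y-Y_k]|\leq \mathbb{\hat{E}}[|Y-Y_k|]$, which holds uniformly in $Q\in\mathcal{P}$ by Theorem \ref{th-re-1}. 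Part (1) is then automatic: for each fixed $Y$ the quantity $E_P[Y]-\mathbb{\tilde{E}}[Y]$ is affine in $P$, so its pointwise supremum $\alpha_0^T$ is convex, which gives exactly the stated inequality.

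For part (2), for each fixed $Y\in L_G^1(\Omega_T)$ the auxiliary fact yields $E_{P_n}[Y]-\mathbb{\tilde{E}}[Y]\to E_P[Y]-\mathbb{\tilde{E}}[Y]$, so $\liminf_n \alpha_0^T(P_n)\geq E_P[Y]-\mathbb{\tilde{E}}[Y]$, and taking the supremum over $Y\in L_G^1(\Omega_T)$ yields $\liminf_n \alpha_0^T(P_n)\geq \alpha_0^T(P)$.

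For part (3), nonemptiness is exactly the existence of the optimiser $Q$ produced by the separation argument in Theorem \ref{th-re-2} (applied with $t=T$). Convexity follows from (1) together with convexity of $\mathcal{P}$: for $P=\lambda P_1+(1-\lambda)P_2$ with $P_i\in\mathcal{P}^*$,
\[ E_P[X]-\alpha_0^T(P)\geq\lambda(E_{P_1}[X]-\alpha_0^T(P_1))+(1-\lambda)(E_{P_2}[X]-\alpha_0^T(P_2))=\mathbb{\tilde{E}}[X], \]
while the reverse inequality is already in (\ref{ne-8}). For weak compactness, since $\mathcal{P}$ is weakly compact it suffices to show $\mathcal{P}^*$ is weakly closed: if $Q_n\in\mathcal{P}^*$ converge weakly to $Q\in\mathcal{P}$, then $E_{Q_n}[X]\to E_Q[X]$ by the auxiliary fact and $\alpha_0^T(Q)\leq\liminf_n\alpha_0^T(Q_n)$ by (2), whence $\mathbb{\tilde{E}}[X]=\lim_n(E_{Q_n}[X]-\alpha_0^T(Q_n))\leq E_Q[X]-\alpha_0^T(Q)$, which combined with (\ref{ne-8}) forces equality, i.e.\ $Q\in\mathcal{P}^*$.

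Finally, for part (4), the domination (\ref{ne-1}) makes $\mathbb{\tilde{E}}$ Lipschitz on $L_G^1(\Omega_T)$, so $\mathbb{\tilde{E}}[X_n]\to\mathbb{\tilde{E}}[X]$. Splitting $E_{P_n}[X_n]-E_P[X]=E_{P_n}[X_n-X]+(E_{P_n}[X]-E_P[X])$ and using $|E_{P_n}[X_n-X]|\leq\mathbb{\hat{E}}[|X_n-X|]\to 0$ together with the auxiliary fact applied to $X$ gives $E_{P_n}[X_n]\to E_P[X]$; consequently $\alpha_0^T(P_n)=E_{P_n}[X_n]-\mathbb{\tilde{E}}[X_n]\to E_P[X]-\mathbb{\tilde{E}}[X]$. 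Combining this with (2) yields $\alpha_0^T(P)\leq E_P[X]-\mathbb{\tilde{E}}[X]$, while (\ref{ne-8}) supplies the reverse inequality, so $P\in\mathcal{P}^*$. The main technical point underlying the argument is the auxiliary continuity fact: it is routine for cylinder functions but must be extended to all of $L_G^1(\Omega_T)$ with care, because $\mathcal{P}$ is nondominated and the only uniform control on the measures in $\mathcal{P}$ is the Choquet-type bound supplied by $\mathbb{\hat{E}}$.
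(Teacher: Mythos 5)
Your proposal is correct and follows essentially the same route as the paper: convexity of $\alpha_0^T$ from the affine-in-$P$ representation, lower semicontinuity via weak continuity of $P\mapsto E_P[Y]$, nonemptiness from the separation argument in Theorem \ref{th-re-2}, and the same splitting $E_{P_n}[X_n]-E_P[X]=E_{P_n}[X_n-X]+(E_{P_n}[X]-E_P[X])$ for (4). The only cosmetic difference is that you prove the auxiliary continuity fact by approximation from $Lip(\Omega_T)$ with the uniform bound $|E_Q[Y-Y_k]|\leq\mathbb{\hat{E}}[|Y-Y_k|]$, whereas the paper simply cites Lemma 29 of \cite{DHP11} for it.
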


\begin{proof}
(1) By the definition of $\alpha_{0}^{T}(\cdot)$ in (\ref{ne-5}), for any
$Y\in L_{G}^{1}(\Omega_{T})$, we have%
\[
E_{P^{\lambda}}[Y]-\mathbb{\tilde{E}}[Y]=\lambda(E_{P_{1}}[Y]-\mathbb{\tilde
{E}}[Y])+(1-\lambda)(E_{P_{2}}[Y]-\mathbb{\tilde{E}}[Y])\leq \lambda \alpha
_{0}^{T}(P_{1})+(1-\lambda)\alpha_{0}^{T}(P_{2}),
\]
where $P^{\lambda}=\lambda P_{1}+(1-\lambda)P_{2}$. Then we obtain%
\[
\alpha_{0}^{T}(\lambda P_{1}+(1-\lambda)P_{2})=\sup_{Y\in L_{G}^{1}(\Omega
_{T})}\left(  E_{P^{\lambda}}[Y]-\mathbb{\tilde{E}}[Y]\right)  \leq
\lambda \alpha_{0}^{T}(P_{1})+(1-\lambda)\alpha_{0}^{T}(P_{2}).
\]

(2) For any given $Y\in L_{G}^{1}(\Omega_{T})$, by Lemma 29 in \cite{DHP11}
and (\ref{ne-5}), we have%
\[
E_{P}[Y]-\mathbb{\tilde{E}}[Y]=\lim_{n\rightarrow \infty}\left(  E_{P_{n}%
}[Y]-\mathbb{\tilde{E}}[Y]\right)  \leq \underset{n\rightarrow \infty}{\lim \inf
}\alpha_{0}^{T}(P_{n}).
\]
Then we get%
\[
\alpha_{0}^{T}(P)=\sup_{Y\in L_{G}^{1}(\Omega_{T})}\left(  E_{P}%
[Y]-\mathbb{\tilde{E}}[Y]\right)  \leq \underset{n\rightarrow \infty}{\lim \inf
}\alpha(P_{n}).
\]

(3) By the proof of Theorem \ref{th-re-2}, we know that $\mathcal{P}^{\ast}$
is nonempty. By (1) and (2), it is easy to check that $\mathcal{P}^{\ast}$ is
weakly compact and convex.

(4) Since%
\[
|E_{P_{n}}[X_{n}]-E_{P}[X]|\leq E_{P_{n}}[|X_{n}-X|]+|E_{P_{n}}[X]-E_{P}%
[X]|\leq \mathbb{\hat{E}}[|X_{n}-X|]+|E_{P_{n}}[X]-E_{P}[X]|,
\]
we get $E_{P_{n}}[X_{n}]\rightarrow E_{P}[X]$ as $n\rightarrow \infty$. It
follows from $P_{n}\in \mathcal{P}_{n}^{\ast}$ and (2) that $E_{P}%
[X]-\alpha_{0}^{T}(P)\geq \mathbb{\tilde{E}}[X]$. By Theorem \ref{th-re-2}, we
obtain $P\in \mathcal{P}^{\ast}$.
\end{proof}

Now we study the representation theorem for conditional $\tilde{G}%
$-expectation $\mathbb{\tilde{E}}_{t}$. For each fixed $P\in \mathcal{P}$, set%
\begin{equation}
\mathcal{P}(t,P)=\{Q\in \mathcal{P}:Q(A)=P(A)\text{ for any }A\in
\mathcal{F}_{t}\}, \label{ne-9}%
\end{equation}
and define%
\begin{equation}
\alpha_{t}^{T}(P):=\underset{Y\in L_{G}^{1}(\Omega_{T})}{ess\sup}^{P}%
(E_{P}[Y|\mathcal{F}_{t}]-\mathbb{\tilde{E}}_{t}[Y]),\text{ }P\text{-a.s.,}
\label{ne-10}%
\end{equation}
where the $ess\sup$ is obtained under the probability $P$.

\begin{theorem}
\label{new-th-re-1}For each given $P\in \mathcal{P}$, let $\mathcal{P}(t,P)$ be
defined in (\ref{ne-9}). Then, for each $X\in L_{G}^{1}(\Omega_{T})$, we have%
\begin{equation}
\mathbb{\tilde{E}}_{t}[X]=\underset{Q\in \mathcal{P}(t,P)}{ess\sup}^{P}%
(E_{Q}[X|\mathcal{F}_{t}]-\alpha_{t}^{T}(Q)),\text{ }P\text{-a.s.,}
\label{ne-13}%
\end{equation}
where $\alpha_{t}^{T}(Q)$ is defined in (\ref{ne-10}).
\end{theorem}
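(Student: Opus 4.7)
The plan is to prove the two inequalities separately. For the ``$\ge$'' direction, I would feed $Y=X$ directly into the definition (\ref{ne-10}) of $\alpha_t^T(Q)$: for each $Q\in\mathcal{P}(t,P)$ this yields $\alpha_t^T(Q)\ge E_Q[X|\mathcal{F}_t]-\mathbb{\tilde{E}}_t[X]$ $P$-a.s., because $Q|_{\mathcal{F}_t}=P|_{\mathcal{F}_t}$ and both sides are $\mathcal{F}_t$-measurable, so the $Q$-a.s.\ statement transfers to a $P$-a.s.\ one. Rearranging and taking the essential supremum gives $\mathbb{\tilde{E}}_t[X]\ge \underset{Q\in\mathcal{P}(t,P)}{ess\sup}^{P}(E_Q[X|\mathcal{F}_t]-\alpha_t^T(Q))$.

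For the ``$\le$'' direction, I would adapt the separation argument of Theorem \ref{th-re-2} to a weighted conditional setting. For each nonnegative bounded $\xi\in L_G^1(\Omega_t)$, introduce
\[
\rho_\xi(Y):=E_P[\xi\mathbb{\tilde{E}}_t[Y]],\qquad Y\in L_G^1(\Omega_T).
\]
This functional is convex (since $\xi\ge 0$ and $\mathbb{\tilde{E}}_t$ is convex) and $\|\cdot\|_{L_G^1}$-continuous: by (\ref{ne-1}), $|\rho_\xi(Y_1)-\rho_\xi(Y_2)|\le \|\xi\|_\infty \mathbb{\hat{E}}[|Y_1-Y_2|]$. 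Mimicking Theorem \ref{th-re-2}, I would separate $\{(Y,\beta)\in L_G^1(\Omega_T)\times\mathbb{R}:\beta\ge\rho_\xi(Y)\}$ from $(X,\rho_\xi(X))$ to obtain a continuous linear $E^\xi$ on $L_G^1(\Omega_T)$ and a scalar $b_\xi$ with $\rho_\xi(Y)\ge E^\xi(Y)-b_\xi$ for every $Y$ and equality at $Y=X$. The same tests on $Y=\pm\lambda$ and $Y\ge 0$ as in Theorem \ref{th-re-2} force $E^\xi(1)=E_P[\xi]$ and $0\le E^\xi\le \|\xi\|_\infty\mathbb{\hat{E}}$, so Theorem \ref{th-re-1} furnishes $Q_\xi\in\mathcal{P}$ with $E^\xi(Y)=E_{Q_\xi}[\xi Y]$.

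Next I would identify $Q_\xi\in\mathcal{P}(t,P)$ and $b_\xi=E_P[\xi\alpha_t^T(Q_\xi)]$ by testing the separation against $\mathcal{F}_t$-measurable perturbations of $X$: since $\mathbb{\tilde{E}}_t[X+\zeta]=\mathbb{\tilde{E}}_t[X]+\zeta$ for bounded $\zeta\in L_G^1(\Omega_t)$, applying the inequality to $X\pm\zeta$ (with equality at $X$) forces $E_P[\xi\zeta]=E_{Q_\xi}[\xi\zeta]$, i.e.\ $Q_\xi|_{\mathcal{F}_t}=P|_{\mathcal{F}_t}$, and then the defining identity of $\alpha_t^T(Q_\xi)$ together with the equality case at $Y=X$ pins down $b_\xi=E_P[\xi\alpha_t^T(Q_\xi)]$. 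Specializing $\xi=I_A$ for each $A\in\mathcal{F}_t$ with $P(A)>0$ produces some $Q_A\in\mathcal{P}(t,P)$ with $E_P[I_A\mathbb{\tilde{E}}_t[X]]=E_P[I_A(E_{Q_A}[X|\mathcal{F}_t]-\alpha_t^T(Q_A))]$; combined with upward directedness of $\{E_Q[X|\mathcal{F}_t]-\alpha_t^T(Q):Q\in\mathcal{P}(t,P)\}$ under $\mathcal{F}_t$-pasting (analogous to Proposition \ref{pro-3}(1), using stability of $\mathcal{P}$ under pasting) this family of integrated identities upgrades to the pointwise $P$-a.s.\ equality. The main obstacle will be precisely these two identification steps together with the upward-directedness argument: the identifications hinge on the translation property $\mathbb{\tilde{E}}_t[X+\zeta]=\mathbb{\tilde{E}}_t[X]+\zeta$, while the directedness relies on stability of $\mathcal{P}$ under $\mathcal{F}_t$-pasting, a standard feature of the $G$-framework.
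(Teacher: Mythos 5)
Your ``$\ge$'' direction is fine and is the same easy half as in the paper. The ``$\le$'' direction is an attractive idea --- a weighted conditional version of the separation argument of Theorem \ref{th-re-2} --- and it is genuinely different from the paper's route (the paper instead argues in three steps: first for $X=\phi(B^t_{t_1},\dots,B^t_{t_N})$ by transporting the optimizer $Q_1$ of Theorem \ref{th-re-2} into $\mathcal{P}(t,P)$ via Lemma 18 of \cite{HP21} and checking $\alpha_t^T(Q^{\ast})=\alpha_0^T(Q_1)$, then for general cylinder functions by freezing the $\mathcal{F}_t$-variables, then by density). However, your argument has two genuine gaps at exactly the points you yourself flag as ``the main obstacle'', and the justifications you offer there do not work.

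First, the representation $E^\xi(Y)=E_{Q_\xi}[\xi Y]$ with $Q_\xi\in\mathcal{P}$ does not follow from Theorem \ref{th-re-1}: the bounds $E^\xi(1)=E_P[\xi]$ and $0\le E^\xi\le\|\xi\|_\infty\mathbb{\hat{E}}$ do not even identify $E^\xi$ with a multiple of $E_Q$ for some $Q\in\mathcal{P}$ (the normalized functional $E^\xi/E_P[\xi]$ is dominated only by $(\|\xi\|_\infty/E_P[\xi])\mathbb{\hat{E}}$), and they say nothing about the representing measure having density exactly $\xi$ with respect to a member of $\mathcal{P}$. To reach the form $E_{Q_\xi}[\xi\,\cdot\,]$ you would need the sharper domination $E^\xi(Y)\le E_P[\xi\mathbb{\hat{E}}_t[Y]]$ together with the conditional representation of $\mathbb{\hat{E}}_t$ over $\mathcal{P}(t,P)$ and the weak compactness and convexity of the family $\{E_Q[\xi\,\cdot\,]:Q\in\mathcal{P}(t,P)\}$, none of which appears in your proposal. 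Second, the identification $b_\xi=E_P[\xi\alpha_t^T(Q_\xi)]$ is not a consequence of the translation property: by construction $b_\xi=\sup_{Y}E_P[\xi(E_{Q_\xi}[Y|\mathcal{F}_t]-\mathbb{\tilde{E}}_t[Y])]$, whereas $E_P[\xi\alpha_t^T(Q_\xi)]$ is the integral of the essential supremum, and interchanging the supremum over $Y$ with $E_P[\xi\,\cdot\,]$ is nontrivial precisely because $\mathbb{\tilde{E}}_t$ is only convex, so the family $\{E_{Q_\xi}[Y|\mathcal{F}_t]-\mathbb{\tilde{E}}_t[Y]\}$ is not obviously upward directed in $Y$. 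That interchange is exactly the content of (\ref{ne-14}) in Proposition \ref{new-pro-3}, whose proof requires the Tietze-extension pasting construction; you would have to import that argument here. (Once both steps are in place, note that taking $\xi\equiv 1$ already yields the pointwise identity from the integrated one, since you have a pointwise inequality in one direction, so the final $I_A$-pasting step is unnecessary.) Until these two steps are supplied, the ``$\le$'' direction is not established.
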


\begin{proof}
The proof is divided into three steps.

Step 1: For each fixed $X=\phi(B_{t_{1}}^{t},\ldots,B_{t_{N}}^{t})$, where
$t<t_{1}<\cdots<t_{N}\leq T$, $\phi \in C_{b.Lip}(\mathbb{R}^{d\times N})$ and
$B_{s}^{t}=B_{s}-B_{t}$ for $s\geq t$. By the proof of Theorem \ref{th-re-2},
there exists a $Q_{1}\in \mathcal{P}$ such that $\mathbb{\tilde{E}}%
[X]=E_{Q_{1}}[X]-\alpha_{0}^{T}(Q_{1})$. For each $Y\in Lip(\Omega_{T})$, we
can find $0<s_{1}<\cdots<s_{i}\leq t$, $t<t_{1}<\cdots<t_{j}\leq T$ and
$\varphi \in C_{b.Lip}(\mathbb{R}^{d\times(i+j)})$ such that
\begin{equation}
Y=\varphi(B_{s_{1}},\ldots,B_{s_{i}},B_{t_{1}}^{t},\ldots,B_{t_{j}}^{t}).
\label{ne-11}%
\end{equation}
By Lemma 18 in \cite{HP21}, we can find a $Q^{\ast}\in \mathcal{P}(t,P)$ such
that, for each $Y$ in (\ref{ne-11}),%
\begin{equation}
E_{Q^{\ast}}[Y|\mathcal{F}_{t}]=\psi(B_{s_{1}},\ldots,B_{s_{i}}),\text{ where
}\psi(x_{1},\ldots,x_{i})=E_{Q_{1}}[\varphi(x_{1},\ldots,x_{i},B_{t_{1}}%
^{t},\ldots,B_{t_{j}}^{t})]. \label{ne-12}%
\end{equation}
Thus
\[
\underset{Y\in L_{G}^{1}(\Omega_{T})}{ess\sup}^{P}(E_{Q^{\ast}}[Y|\mathcal{F}%
_{t}]-\mathbb{\tilde{E}}_{t}[Y])=\underset{Y\in Lip(\Omega_{T})}{ess\sup}%
^{P}(E_{Q^{\ast}}[Y|\mathcal{F}_{t}]-\mathbb{\tilde{E}}_{t}[Y])\leq \alpha
_{0}^{T}(Q_{1}),\text{ }P\text{-a.s.}%
\]
Since $E_{Q^{\ast}}[X|\mathcal{F}_{t}]=E_{Q_{1}}[X]$ and $\mathbb{\tilde{E}%
}_{t}[X]=\mathbb{\tilde{E}}[X]$, we obtain $\alpha_{t}^{T}(Q^{\ast}%
)=\alpha_{0}^{T}(Q_{1})$, $P$-a.s. Then we get $\mathbb{\tilde{E}}%
_{t}[X]=E_{Q^{\ast}}[X|\mathcal{F}_{t}]-\alpha_{t}^{T}(Q^{\ast})$. By
(\ref{ne-10}), it is easy to verify that%
\[
\mathbb{\tilde{E}}_{t}[X]\geq \underset{Q\in \mathcal{P}(t,P)}{ess\sup}%
^{P}(E_{Q}[X|\mathcal{F}_{t}]-\alpha_{t}^{T}(Q)),\text{ }P\text{-a.s.}%
\]
Thus we obtain (\ref{ne-13}) for this $X$.

Step 2: For each fixed $X=\phi(B_{s_{1}},\ldots,B_{s_{k}},B_{t_{1}}^{t}%
,\ldots,B_{t_{N}}^{t})$, where $0<s_{1}<\cdots<s_{k}\leq t$, $t<t_{1}%
<\cdots<t_{N}\leq T$ and $\phi \in C_{b.Lip}(\mathbb{R}^{d\times(k+N)})$. For
each $(x_{1},\ldots,x_{k})\in \mathbb{R}^{d\times k}$, define%
\[
L(x_{1},\ldots,x_{k})=\underset{Q\in \mathcal{P}(t,P)}{ess\sup}^{P}(E_{Q}%
[\phi(x_{1},\ldots,x_{k},B_{t_{1}}^{t},\ldots,B_{t_{N}}^{t})|\mathcal{F}%
_{t}]-\alpha_{t}^{T}(Q)),\text{ }P\text{-a.s.}%
\]
By the similar proof of Lemma 19 in \cite{HP21}, we can get%
\[
\underset{Q\in \mathcal{P}(t,P)}{ess\sup}^{P}(E_{Q}[X|\mathcal{F}_{t}%
]-\alpha_{t}^{T}(Q))=L(B_{s_{1}},\ldots,B_{s_{k}}),\text{ }P\text{-a.s.}%
\]
It follows from Step 1 that $L(x_{1},\ldots,x_{k})=\mathbb{\tilde{E}}%
[\phi(x_{1},\ldots,x_{k},B_{t_{1}}^{t},\ldots,B_{t_{N}}^{t})]$. By the
definition of $\mathbb{\tilde{E}}_{t}$, we obtain $\mathbb{\tilde{E}}%
_{t}[X]=L(B_{s_{1}},\ldots,B_{s_{k}})$. Thus we obtain (\ref{ne-13}) for this
$X$.

Step 3: For each fixed $X\in L_{G}^{1}(\Omega_{T})$. There exists a squence
$X_{n}\in Lip(\Omega_{T})$, $n\geq1$, such that $\mathbb{\hat{E}}%
[|X_{n}-X|]\rightarrow0$ as $n\rightarrow \infty$. Note that%
\[
|\mathbb{\tilde{E}}_{t}[X_{n}]-\mathbb{\tilde{E}}_{t}[X]|\leq \mathbb{\hat{E}%
}_{t}[|X_{n}-X|],
\]%
\begin{align*}
&  \left \vert \underset{Q\in \mathcal{P}(t,P)}{ess\sup}^{P}(E_{Q}%
[X_{n}|\mathcal{F}_{t}]-\alpha_{t}^{T}(Q))-\underset{Q\in \mathcal{P}%
(t,P)}{ess\sup}^{P}(E_{Q}[X|\mathcal{F}_{t}]-\alpha_{t}^{T}(Q))\right \vert \\
&  \leq \underset{Q\in \mathcal{P}(t,P)}{ess\sup}^{P}E_{Q}[|X_{n}-X||\mathcal{F}%
_{t}]\\
&  \leq \mathbb{\hat{E}}_{t}[|X_{n}-X|],\text{ }P\text{-a.s.}%
\end{align*}
Thus we obtain (\ref{ne-13}) for this $X$ by Step 2. The proof is complete.
\end{proof}

The following two propositions are important to derive the maximum principle.

\begin{proposition}
\label{new-pro-3}Let $P\in \mathcal{P}$ be given. Then we have%
\begin{equation}
E_{P}[\alpha_{t}^{T}(P)]=\sup_{X\in L_{G}^{1}(\Omega_{T})}E_{P}\left[
E_{P}[X|\mathcal{F}_{t}]-\mathbb{\tilde{E}}_{t}[X]\right]  , \label{ne-14}%
\end{equation}%
\begin{equation}
\alpha_{0}^{T}(P)=\alpha_{0}^{t}(P)+E_{P}[\alpha_{t}^{T}(P)]. \label{ne-15}%
\end{equation}

\end{proposition}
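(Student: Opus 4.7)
The plan is to prove (\ref{ne-14}) first and then obtain (\ref{ne-15}) by a direct test-function construction that combines (\ref{ne-14}), the tower property $\mathbb{\tilde{E}}[\mathbb{\tilde{E}}_{t}[Y]]=\mathbb{\tilde{E}}[Y]$, and the translation invariance $\mathbb{\tilde{E}}_{t}[Y+Z]=\mathbb{\tilde{E}}_{t}[Y]+Z$ for $Z\in L_{G}^{1}(\Omega_{t})$.

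For (\ref{ne-14}), the direction $E_{P}[\alpha_{t}^{T}(P)]\geq\sup_{X}E_{P}[E_{P}[X|\mathcal{F}_{t}]-\mathbb{\tilde{E}}_{t}[X]]$ is immediate from (\ref{ne-10}) upon taking $E_{P}$. The substantive direction $\leq$ I would establish by showing that the family $\mathcal{H}:=\{h(Y):=E_{P}[Y|\mathcal{F}_{t}]-\mathbb{\tilde{E}}_{t}[Y]:Y\in L_{G}^{1}(\Omega_{T})\}$ is upward-directed $P$-a.s. Given $Y_{1},Y_{2}\in L_{G}^{1}(\Omega_{T})$, let $A:=\{h(Y_{1})\geq h(Y_{2})\}\in\mathcal{F}_{t}$ and approximate $\mathbf{1}_{A}$ in $L^{1}(P)$ by $\xi_{n}\in L_{G}^{1}(\Omega_{t})$ with values in $[0,1]$. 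Setting $Y_{3}^{n}:=\xi_{n}Y_{1}+(1-\xi_{n})Y_{2}\in L_{G}^{1}(\Omega_{T})$, the linearity of $E_{P}[\cdot|\mathcal{F}_{t}]$ together with convexity of $\mathbb{\tilde{E}}_{t}$ under $\mathcal{F}_{t}$-measurable convex weights yields
\[
h(Y_{3}^{n})\geq\xi_{n}h(Y_{1})+(1-\xi_{n})h(Y_{2}).
\]
Dominated convergence as $n\to\infty$ shows that $h(Y_{1})\vee h(Y_{2})$ is reached in $E_{P}$-expectation by a sequence in $\mathcal{H}$; the classical lemma on essential suprema over upward-directed families then produces $\tilde{Y}_{n}\in L_{G}^{1}(\Omega_{T})$ with $h(\tilde{Y}_{n})\nearrow\alpha_{t}^{T}(P)$ $P$-a.s., and monotone convergence closes the bound.

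For (\ref{ne-15}), the upper bound follows from decomposition: for any $Y\in L_{G}^{1}(\Omega_{T})$ the tower property gives
\[
E_{P}[Y]-\mathbb{\tilde{E}}[Y]=E_{P}[h(Y)]+\bigl(E_{P}[\mathbb{\tilde{E}}_{t}[Y]]-\mathbb{\tilde{E}}[\mathbb{\tilde{E}}_{t}[Y]]\bigr)\leq E_{P}[\alpha_{t}^{T}(P)]+\alpha_{0}^{t}(P),
\]
using (\ref{ne-14}) on the first term and (\ref{ne-5}) applied to $\mathbb{\tilde{E}}_{t}[Y]\in L_{G}^{1}(\Omega_{t})$ on the second. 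The reverse inequality I would obtain by a single test function: for $\varepsilon>0$, pick $Y^{*}\in L_{G}^{1}(\Omega_{T})$ via (\ref{ne-14}) with $E_{P}[h(Y^{*})]\geq E_{P}[\alpha_{t}^{T}(P)]-\varepsilon$ and $Z^{*}\in L_{G}^{1}(\Omega_{t})$ via (\ref{ne-5}) with $E_{P}[Z^{*}]-\mathbb{\tilde{E}}[Z^{*}]\geq\alpha_{0}^{t}(P)-\varepsilon$, and set $Y:=Y^{*}-\mathbb{\tilde{E}}_{t}[Y^{*}]+Z^{*}\in L_{G}^{1}(\Omega_{T})$. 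Translation invariance forces $\mathbb{\tilde{E}}_{t}[Y]=Z^{*}$, so $\mathbb{\tilde{E}}[Y]=\mathbb{\tilde{E}}[Z^{*}]$ and
\[
E_{P}[Y]-\mathbb{\tilde{E}}[Y]=E_{P}[h(Y^{*})]+(E_{P}[Z^{*}]-\mathbb{\tilde{E}}[Z^{*}])\geq E_{P}[\alpha_{t}^{T}(P)]+\alpha_{0}^{t}(P)-2\varepsilon,
\]
after which $\varepsilon\downarrow0$ concludes.

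The main obstacle I expect is the upward-directedness step in (\ref{ne-14}): it requires simultaneously the convexity of $\mathbb{\tilde{E}}_{t}$ under $\mathcal{F}_{t}$-measurable $[0,1]$-valued weights and a density statement placing suitable $\xi_{n}\to\mathbf{1}_{A}$ inside $L_{G}^{1}(\Omega_{t})$. If the convex-weight inequality is not directly available on $L_{G}^{1}$, I would discretise $\xi_{n}$ as simple $\mathcal{F}_{t}$-measurable functions and reduce to the local property of $\mathbb{\tilde{E}}_{t}$; if the density is delicate, I would first verify (\ref{ne-14}) for $Y\in Lip(\Omega_{T})$, where the PDE characterisation of $\mathbb{\tilde{E}}_{t}$ supplies explicit convexity and continuity, and extend by the $L_{G}^{1}$-continuity of $\mathbb{\tilde{E}}_{t}$ noted after (\ref{ne-1}).
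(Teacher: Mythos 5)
Your treatment of (\ref{ne-15}) is essentially the paper's: the authors prove it in one chain of identities by writing $\alpha_{0}^{T}(P)$ as a supremum over $X+Y_{1}$ with $X\in L_{G}^{1}(\Omega_{T})$, $Y_{1}\in L_{G}^{1}(\Omega_{t})$, and splitting $E_{P}[X+Y_{1}]-\mathbb{\tilde{E}}[X+Y_{1}]$ exactly as you do via the tower property and translation invariance; your test function $Y=Y^{\ast}-\mathbb{\tilde{E}}_{t}[Y^{\ast}]+Z^{\ast}$ is the special case $X=Y^{\ast}$, $Y_{1}=Z^{\ast}-\mathbb{\tilde{E}}_{t}[Y^{\ast}]$ of their decomposition. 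For (\ref{ne-14}) your route is genuinely different, and it is there that the gaps sit. The paper also starts from a sequence $X_{n}$ realizing the $ess\sup$ and the partition $A_{i}$ on which each term attains $Y_{n}=\vee_{i\leq n}h(X_{i})$, but it then builds a \emph{single} test random variable $L^{k}=\sum_{i}f_{i}^{k}X_{i}$ with disjointly supported $[0,1]$-valued cutoffs $f_{i}^{k}\in L_{G}^{1}(\Omega_{t})$ equal to $1$ on large compact subsets $B_{i}^{k}\subset A_{i}$, and controls both $|E_{P}[L^{k}|\mathcal{F}_{t}]-E_{P}[X_{i}|\mathcal{F}_{t}]|$ and $|\mathbb{\tilde{E}}_{t}[L^{k}]-\mathbb{\tilde{E}}_{t}[X_{i}]|$ on $A_{i}$ by $\mathbb{\hat{E}}_{t}[\sum_{j}|X_{j}|]I_{A_{i}\setminus B_{i}^{k}}$, the second bound coming from the domination $|\mathbb{\tilde{E}}_{t}[\xi]-\mathbb{\tilde{E}}_{t}[\eta]|\leq\mathbb{\hat{E}}_{t}[|\xi-\eta|]$ plus the known local property of the \emph{sublinear} $\mathbb{\hat{E}}_{t}$. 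This is the crucial trick: no structural property of $\mathbb{\tilde{E}}_{t}$ beyond domination is ever invoked.

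Your argument, by contrast, leans on two things that are not available off the shelf. First, the inequality $\mathbb{\tilde{E}}_{t}[\xi_{n}Y_{1}+(1-\xi_{n})Y_{2}]\leq\xi_{n}\mathbb{\tilde{E}}_{t}[Y_{1}]+(1-\xi_{n})\mathbb{\tilde{E}}_{t}[Y_{2}]$ for random $\mathcal{F}_{t}$-measurable weights $\xi_{n}\in L_{G}^{1}(\Omega_{t})$ is nowhere established in the paper or its references; it is provable (for $Lip$ random variables it follows by freezing the $\Omega_{t}$-variables in the definition of $\mathbb{\tilde{E}}_{t}$ and using convexity of $\mathbb{\tilde{E}}$ pointwise, then extending by density, with some care about products in $L_{G}^{1}$), but you would have to prove it, and the reduction you sketch ``to the local property of $\mathbb{\tilde{E}}_{t}$'' runs into the same obstruction it is meant to avoid, since $I_{A}X$ for a general Borel $A\in\mathcal{F}_{t}$ need not lie in $L_{G}^{1}(\Omega_{T})$. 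Second, even granting that inequality, you only obtain $h(Y_{3}^{n})\geq\xi_{n}h(Y_{1})+(1-\xi_{n})h(Y_{2})$ with $\xi_{n}\to I_{A}$ in $L^{1}(P)$, i.e.\ the family $\mathcal{H}$ is upward directed only \emph{approximately and in $E_{P}$-expectation}, not pointwise; the classical essential-supremum lemma you invoke requires genuine pointwise directedness and does not apply verbatim. This is repairable (iterate the construction to show $E_{P}[\vee_{i\leq n}h(X_{i})]\leq\sup_{Y}E_{P}[h(Y)]$ for each $n$ and conclude by monotone convergence — which is in effect what the paper does directly), but as written the step ``produces $\tilde{Y}_{n}$ with $h(\tilde{Y}_{n})\nearrow\alpha_{t}^{T}(P)$ $P$-a.s.'' is not justified. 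In short: same skeleton for (\ref{ne-15}), a workable but incomplete alternative for (\ref{ne-14}) whose two missing lemmas are precisely what the paper's disjoint-support construction and domination by $\mathbb{\hat{E}}_{t}$ are designed to avoid.
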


\begin{proof}
By the property of $ess\sup$, we know that there exists a sequence $X_{n}\in
L_{G}^{1}(\Omega_{T})$, $n\geq1$, such that%
\[
\alpha_{t}^{T}(P)=\sup_{n\geq1}(E_{P}[X_{n}|\mathcal{F}_{t}]-\mathbb{\tilde
{E}}_{t}[X_{n}]),\text{ }P\text{-a.s.}%
\]
Set $Y_{n}=\vee_{i=1}^{n}(E_{P}[X_{i}|\mathcal{F}_{t}]-\mathbb{\tilde{E}}%
_{t}[X_{i}])$, $n\geq1$. It is clear that $Y_{n}\uparrow \alpha_{t}^{T}(P)$ as
$n\rightarrow \infty$, $P$-a.s. For each fixed $n$, set%
\[
A_{i}=\{E_{P}[X_{i}|\mathcal{F}_{t}]-\mathbb{\tilde{E}}_{t}[X_{i}]=Y_{n}\}
\backslash \cup_{j=1}^{i-1}A_{j},\text{ }i=1,\ldots,n,
\]
where $\cup_{j=1}^{0}A_{j}=\emptyset$. Then $Y_{n}=\sum_{i=1}^{n}(E_{P}%
[X_{i}|\mathcal{F}_{t}]-\mathbb{\tilde{E}}_{t}[X_{i}])I_{A_{i}}$. For each
$k\geq1$, there exist compact sets $B_{i}^{k}\subset A_{i}$ and $B_{i}^{k}%
\in \mathcal{F}_{t}$, $i=1$,$\ldots$,$n$, such that $P(A_{i}\backslash
B_{i}^{k})<k^{-1}$. By Tietze's extension theorem, there exist continuous
functions $f_{i}^{k}\in L_{G}^{1}(\Omega_{t})$, $i=1$,$\ldots$,$n$, such that%
\[
0\leq f_{i}^{k}\leq1\text{, }f_{i}^{k}|_{B_{i}^{k}}=1\text{ and }\{f_{i}%
^{k}\not =0\} \cap \{f_{j}^{k}\not =0\}=\emptyset \text{ for }i\not =j\text{.}%
\]
Set $L^{k}=\sum_{i=1}^{n}f_{i}^{k}X_{i}$. It is obvious that $L^{k}\in
L_{G}^{1}(\Omega_{T})$. Note that%
\[
|E_{P}[L^{k}|\mathcal{F}_{t}]-E_{P}[X_{i}|\mathcal{F}_{t}]|I_{A_{i}}%
\leq \mathbb{\hat{E}}_{t}\left[  \sum_{j=1}^{n}|X_{j}|\right]  I_{A_{i}%
\backslash B_{i}^{k}},
\]%
\[
|\mathbb{\tilde{E}}_{t}[L^{k}]-\mathbb{\tilde{E}}_{t}[X_{i}]|I_{A_{i}}%
\leq \mathbb{\hat{E}}_{t}\left[  \sum_{j=1}^{n}|X_{j}|\right]  I_{A_{i}%
\backslash B_{i}^{k}}.
\]
Then we obtain
\[
|E_{P}[L^{k}|\mathcal{F}_{t}]-\mathbb{\tilde{E}}_{t}[L^{k}]-Y_{n}%
|\leq2\mathbb{\hat{E}}_{t}\left[  \sum_{j=1}^{n}|X_{j}|\right]  \sum_{i=1}%
^{n}I_{A_{i}\backslash B_{i}^{k}}.
\]
Taking $k\rightarrow \infty$, we get
\[
E_{P}[Y_{n}]\leq \sup_{X\in L_{G}^{1}(\Omega_{T})}E_{P}\left[  E_{P}%
[X|\mathcal{F}_{t}]-\mathbb{\tilde{E}}_{t}[X]\right]  .
\]
Since $Y_{n}\uparrow \alpha_{t}^{T}(P)$, $P$-a.s., we deduce%
\[
E_{P}[\alpha_{t}^{T}(P)]\leq \sup_{X\in L_{G}^{1}(\Omega_{T})}E_{P}\left[
E_{P}[X|\mathcal{F}_{t}]-\mathbb{\tilde{E}}_{t}[X]\right]  .
\]
By the definition of $\alpha_{t}^{T}(P)$, it is easy to get%
\[
E_{P}[\alpha_{t}^{T}(P)]\geq \sup_{X\in L_{G}^{1}(\Omega_{T})}E_{P}\left[
E_{P}[X|\mathcal{F}_{t}]-\mathbb{\tilde{E}}_{t}[X]\right]  .
\]
Thus we obtain (\ref{ne-14}). By the definition of $\alpha_{0}^{T}(P)$,
$\alpha_{0}^{t}(P)$ and (\ref{ne-14}), we have%
\begin{align*}
\alpha_{0}^{T}(P)  &  =\sup \{E_{P}[X]-\mathbb{\tilde{E}}[X]:X\in L_{G}%
^{1}(\Omega_{T})\} \\
&  =\sup \{E_{P}[X+Y_{1}]-\mathbb{\tilde{E}}[X+Y_{1}]:X\in L_{G}^{1}(\Omega
_{T}),Y_{1}\in L_{G}^{1}(\Omega_{t})\} \\
&  =\sup \{E_{P}[E_{P}[X|\mathcal{F}_{t}]-\mathbb{\tilde{E}}_{t}[X]]+E_{P}%
[\mathbb{\tilde{E}}_{t}[X]+Y_{1}]-\mathbb{\tilde{E}}[\mathbb{\tilde{E}}%
_{t}[X]+Y_{1}]:X\in L_{G}^{1}(\Omega_{T}),Y_{1}\in L_{G}^{1}(\Omega_{t})\} \\
&  =\sup_{X\in L_{G}^{1}(\Omega_{T})}E_{P}\left[  E_{P}[X|\mathcal{F}%
_{t}]-\mathbb{\tilde{E}}_{t}[X]\right]  +\sup_{Y\in L_{G}^{1}(\Omega_{t}%
)}(E_{P}[Y]-\mathbb{\tilde{E}}[Y])\\
&  =E_{P}[\alpha_{t}^{T}(P)]+\alpha_{0}^{t}(P).
\end{align*}
The proof is complete.
\end{proof}

\begin{remark}
\label{new-re-4}Similar to the proof of Proposition \ref{new-pro-3}, we can
further obtain%
\[
\alpha_{t}^{T}(P)=\alpha_{t}^{s}(P)+E_{P}[\alpha_{s}^{T}(P)|\mathcal{F}%
_{t}]\text{ for }t\leq s,\text{ }P\text{-a.s., }%
\]
where%
\[
\alpha_{t}^{s}(P):=\underset{Y\in L_{G}^{1}(\Omega_{s})}{ess\sup}^{P}%
(E_{P}[Y|\mathcal{F}_{t}]-\mathbb{\tilde{E}}_{t}[Y]),\text{ }P\text{-a.s.}%
\]

\end{remark}

\begin{proposition}
\label{new-pro-4}Let $X\in L_{G}^{1}(\Omega_{T})$ be given and let
$P\in \mathcal{P}$ satisfy $\mathbb{\tilde{E}}[X]=E_{P}[X]-\alpha_{0}^{T}(P)$.
Then we have%
\begin{equation}
\mathbb{\tilde{E}}_{t}[X]=E_{P}[X|\mathcal{F}_{t}]-\alpha_{t}^{T}(P),\text{
}P\text{-a.s.,} \label{ne-16}%
\end{equation}%
\begin{equation}
\mathbb{\tilde{E}}\left[  \mathbb{\tilde{E}}_{t}[X]\right]  =E_{P}\left[
\mathbb{\tilde{E}}_{t}[X]\right]  -\alpha_{0}^{t}(P). \label{ne-17}%
\end{equation}

\end{proposition}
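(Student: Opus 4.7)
The plan is to sandwich the two sides of (\ref{ne-16}) in $L^{1}(P)$ by combining Theorem \ref{new-th-re-1}, Proposition \ref{new-pro-3}, and the tower property of $\mathbb{\tilde{E}}$, and then upgrade equality in $P$-mean to $P$-a.s.\ equality using a one-sided pointwise bound.

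First, since $P\in \mathcal{P}(t,P)$ trivially, Theorem \ref{new-th-re-1} delivers the pointwise inequality
\[
\mathbb{\tilde{E}}_{t}[X]\geq E_{P}[X|\mathcal{F}_{t}]-\alpha_{t}^{T}(P),\quad P\text{-a.s.,}
\]
and integrating under $P$ yields the lower bound $E_{P}[\mathbb{\tilde{E}}_{t}[X]]\geq E_{P}[X]-E_{P}[\alpha_{t}^{T}(P)]$. For the matching upper bound, I would apply the tower property $\mathbb{\tilde{E}}[X]=\mathbb{\tilde{E}}[\mathbb{\tilde{E}}_{t}[X]]$ of the consistent convex $\tilde{G}$-expectation. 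Since $\mathbb{\tilde{E}}_{t}[X]\in L_{G}^{1}(\Omega_{t})$, the definition (\ref{ne-5}) of $\alpha_{0}^{t}(P)$ gives $\mathbb{\tilde{E}}[\mathbb{\tilde{E}}_{t}[X]]\geq E_{P}[\mathbb{\tilde{E}}_{t}[X]]-\alpha_{0}^{t}(P)$. Using the hypothesis $\mathbb{\tilde{E}}[X]=E_{P}[X]-\alpha_{0}^{T}(P)$ and the decomposition $\alpha_{0}^{T}(P)=\alpha_{0}^{t}(P)+E_{P}[\alpha_{t}^{T}(P)]$ from Proposition \ref{new-pro-3}, this rearranges to
\[
E_{P}[\mathbb{\tilde{E}}_{t}[X]]\leq E_{P}[X]-E_{P}[\alpha_{t}^{T}(P)].
\]

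Combining the two bounds gives $E_{P}[\mathbb{\tilde{E}}_{t}[X]]=E_{P}[X]-E_{P}[\alpha_{t}^{T}(P)]$, i.e.\ $E_{P}\bigl[\mathbb{\tilde{E}}_{t}[X]-(E_{P}[X|\mathcal{F}_{t}]-\alpha_{t}^{T}(P))\bigr]=0$. Together with the pointwise inequality from the first step, the integrand is a nonnegative random variable with zero $P$-mean, so it vanishes $P$-a.s., which is exactly (\ref{ne-16}). To obtain (\ref{ne-17}) I would then take $E_{P}$ of (\ref{ne-16}) to get $E_{P}[\mathbb{\tilde{E}}_{t}[X]]=E_{P}[X]-E_{P}[\alpha_{t}^{T}(P)]$, substitute $E_{P}[\alpha_{t}^{T}(P)]=\alpha_{0}^{T}(P)-\alpha_{0}^{t}(P)$ from Proposition \ref{new-pro-3}, and combine with $\mathbb{\tilde{E}}[\mathbb{\tilde{E}}_{t}[X]]=\mathbb{\tilde{E}}[X]=E_{P}[X]-\alpha_{0}^{T}(P)$ to read off $\mathbb{\tilde{E}}[\mathbb{\tilde{E}}_{t}[X]]=E_{P}[\mathbb{\tilde{E}}_{t}[X]]-\alpha_{0}^{t}(P)$.

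The only subtle point is making sure the chain of inequalities closes tightly: the lower bound in $P$-mean comes from Theorem \ref{new-th-re-1} with the choice $Q=P$, while the upper bound comes from the hypothesis on $X$ pushed through the tower property, and these two match precisely because of the additive decomposition in Proposition \ref{new-pro-3}. Once this algebraic accounting is set up correctly, the upgrade from $L^{1}(P)$-equality to $P$-a.s.\ equality is automatic from the already established one-sided pointwise bound, so no delicate measurability or selection argument is needed beyond what is used in Theorem \ref{new-th-re-1}.
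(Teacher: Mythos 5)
Your proof is correct and follows essentially the same route as the paper: both arguments rest on the pointwise inequality from Theorem \ref{new-th-re-1}, the inequality $\mathbb{\tilde{E}}[\mathbb{\tilde{E}}_{t}[X]]\geq E_{P}[\mathbb{\tilde{E}}_{t}[X]]-\alpha_{0}^{t}(P)$, and the decomposition $\alpha_{0}^{T}(P)=\alpha_{0}^{t}(P)+E_{P}[\alpha_{t}^{T}(P)]$ from Proposition \ref{new-pro-3}. The only difference is presentational: the paper closes the sandwich by contradiction, whereas you do it directly and then upgrade the zero-mean nonnegative gap to a $P$-a.s.\ identity.
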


\begin{proof}
It follows from Theorems \ref{new-th-re-1} and \ref{th-re-2} that
\[
\mathbb{\tilde{E}}_{t}[X]\geq E_{P}[X|\mathcal{F}_{t}]-\alpha_{t}%
^{T}(P),\text{ }P\text{-a.s.,}%
\]%
\[
\mathbb{\tilde{E}}[X]=\mathbb{\tilde{E}}\left[  \mathbb{\tilde{E}}%
_{t}[X]\right]  \geq E_{P}\left[  \mathbb{\tilde{E}}_{t}[X]\right]
-\alpha_{0}^{t}(P).
\]
If (\ref{ne-16}) and (\ref{ne-17}) donot hold at the same time, then we get%
\[
\mathbb{\tilde{E}}[X]>E_{P}[X]-\alpha_{0}^{t}(P)-E_{P}[\alpha_{t}^{T}(P)].
\]
By Proposition \ref{new-pro-3}, we obtain $\mathbb{\tilde{E}}[X]>E_{P}%
[X]-\alpha_{0}^{T}(P)$, which contradicts the assumption. Thus we obtain
(\ref{ne-16}) and (\ref{ne-17}).
\end{proof}

\section{Maximum principle}

\subsection{Variational equation for cost functional}

Let $u^{\ast}\in \mathcal{U}[0,T]$ be an optimal control for control problem
(\ref{ne-4}) and let $(X^{\ast},Y^{\ast})$ be the corresponding solution for
(\ref{ne-2})-(\ref{ne-3}). For each fixed $u\in \mathcal{U}[0,T]$, set
\[
u^{\varepsilon}=(1-\varepsilon)u^{\ast}+\varepsilon u=u^{\ast}+\varepsilon
(u-u^{\ast})\text{ for }\varepsilon \in \lbrack0,1].
\]
Since $U$ is convex, we have $u^{\varepsilon}\in \mathcal{U}[0,T]$. The
solution for (\ref{ne-2})-(\ref{ne-3}) corresponding to $u^{\varepsilon}$ is
denoted by $(X^{\varepsilon},Y^{\varepsilon})$. In the following, the constant
$C$ will change from line to line in the proof.

Set
\[
b_{x}(\cdot)=\left[
\begin{array}
[c]{ccc}%
b_{1x_{1}}(\cdot) & \cdots & b_{1x_{n}}(\cdot)\\
\vdots &  & \vdots \\
b_{nx_{1}}(\cdot) & \cdots & b_{nx_{n}}(\cdot)
\end{array}
\right]  .
\]
Under this notation, $f_{x}(\cdot)=(f_{x_{1}}(\cdot),\ldots,f_{x_{n}}%
(\cdot))\in \mathbb{R}^{1\times n}$. The following variational equation for
$G$-SDE (\ref{ne-2}) was obtained in \cite{HJ1}.

\begin{proposition}
\label{pro-4}(\cite{HJ1})Let assumptions (A1)-(A5) hold. Then%
\[
\sup_{t\leq T}\mathbb{\hat{E}}\left[  |\tilde{X}_{t}^{\varepsilon}%
|^{2}\right]  \leq C\text{ and }\lim_{\varepsilon \downarrow0}\sup_{t\leq
T}\mathbb{\hat{E}}\left[  |\tilde{X}_{t}^{\varepsilon}|^{2}\right]  =0,
\]
where $\tilde{X}_{t}^{\varepsilon}=\varepsilon^{-1}(X_{t}^{\varepsilon}%
-X_{t}^{\ast})-\hat{X}_{t}$, the constant $C>0$ is independent of
$\varepsilon$,%
\[
\left \{
\begin{array}
[c]{rl}%
d\hat{X}_{t}= & [b_{x}(t)\hat{X}_{t}+b_{v}(t)(u_{t}-u_{t}^{\ast}%
)]dt+\sum_{i,j=1}^{d}[h_{x}^{ij}(t)\hat{X}_{t}+h_{v}^{ij}(t)(u_{t}-u_{t}%
^{\ast})]d\langle B^{i},B^{j}\rangle_{t}\\
& +\sum_{i=1}^{d}[\sigma_{x}^{i}(t)\hat{X}_{t}+\sigma_{v}^{i}(t)(u_{t}%
-u_{t}^{\ast})]dB_{t}^{i},\\
\hat{X}_{0}= & 0,
\end{array}
\right.
\]
$b_{x}(t)=b_{x}(t,X_{t}^{\ast},u_{t}^{\ast})$, similar definition for
$b_{v}(t)$, $h_{x}^{ij}(t)$, $h_{v}^{ij}(t)$, $\sigma_{x}^{i}(t)$ and
$\sigma_{v}^{i}(t)$.
\end{proposition}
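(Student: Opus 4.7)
My plan is to carry out the standard Taylor-expansion-plus-Gronwall scheme adapted to the $G$-SDE setting. First I would write
\[
X^{\varepsilon}_t - X^{\ast}_t = \varepsilon(\hat{X}_t + \tilde{X}^{\varepsilon}_t),
\]
and, for each coefficient $\phi \in \{b, h^{ij}, \sigma^i\}$, decompose
\[
\phi(t,X^{\varepsilon}_t,u^{\varepsilon}_t) - \phi(t,X^{\ast}_t,u^{\ast}_t) = \bar{\phi}^{\varepsilon}_x(t)(X^{\varepsilon}_t - X^{\ast}_t) + \varepsilon \bar{\phi}^{\varepsilon}_v(t)(u_t - u^{\ast}_t)
\]
via the mean-value theorem, where $\bar{\phi}^{\varepsilon}_x(t) := \int_0^1 \phi_x(t, X^{\ast} + \lambda(X^{\varepsilon} - X^{\ast}), u^{\varepsilon}) \, d\lambda$ and analogously for $\bar{\phi}^{\varepsilon}_v$. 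Subtracting the $G$-SDE satisfied by $\hat{X}$ from the difference of the $G$-SDEs for $X^{\varepsilon}$ and $X^{\ast}$ and dividing by $\varepsilon$, I would obtain a closed linear $G$-SDE for $\tilde{X}^{\varepsilon}$ with zero initial condition, whose drift, quadratic-variation and diffusion coefficients are $\bar{b}^{\varepsilon}_x$, $\bar{h}^{ij,\varepsilon}_x$, $\bar{\sigma}^{i,\varepsilon}_x$, plus a forcing term $R^{\varepsilon}$ collecting expressions like $[\bar{b}^{\varepsilon}_x(t) - b_x(t, X^{\ast}_t, u^{\ast}_t)]\hat{X}_t$ and $[\bar{b}^{\varepsilon}_v(t) - b_v(t, X^{\ast}_t, u^{\ast}_t)](u_t - u^{\ast}_t)$, with analogues for $h^{ij}$ and $\sigma^i$.

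For the uniform bound $\sup_{t\leq T}\mathbb{\hat{E}}[|\tilde{X}^{\varepsilon}_t|^2] \leq C$, I would apply $G$-It\^o's formula to $|\tilde{X}^{\varepsilon}_t|^2$, use (A3) to control the coefficients $\bar{\phi}^{\varepsilon}_x$ uniformly in $\varepsilon$, invoke the BDG inequality for $G$-Brownian motion to handle the stochastic integral, and close with a Gronwall-type lemma. The a priori estimate $\sup_{t\leq T}\mathbb{\hat{E}}[|X^{\varepsilon}_t - X^{\ast}_t|^2] \leq C \varepsilon^2$, together with the analogous bound $\sup_{t\leq T}\mathbb{\hat{E}}[|\hat{X}_t|^2] \leq C$, both proved by the same technique, give in particular that $R^{\varepsilon}$ is bounded in $M_G^2(0,T)$ uniformly in $\varepsilon$, yielding the constant $C$.

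The vanishing $\sup_{t\leq T}\mathbb{\hat{E}}[|\tilde{X}^{\varepsilon}_t|^2] \to 0$ reduces, after the same Gronwall argument, to showing that $\mathbb{\hat{E}}\bigl[\int_0^T |R^{\varepsilon}_s|^2 \, ds\bigr] \to 0$ as $\varepsilon \downarrow 0$. This is precisely where the uniform-continuity assumption (A5) is used: the modulus $\bar{\omega}$ applied to $|X^{\varepsilon}_t - X^{\ast}_t| + \varepsilon|u_t - u^{\ast}_t|$ tends to $0$ quasi-surely along any subsequence (thanks to $\sup_t \mathbb{\hat{E}}[|X^{\varepsilon}_t - X^{\ast}_t|^2] \to 0$), while the terms are controlled by $C(1 + |\hat{X}|^2 + |u - u^{\ast}|^2)$, which lies in $M_G^1(0,T)$. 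A dominated-convergence argument under $\mathbb{\hat{E}}$ then produces the conclusion.

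The main obstacle, compared with the classical probabilistic setting, is the passage to the limit under the sublinear expectation: dominated convergence for $\mathbb{\hat{E}}$ requires not just quasi-sure convergence but also a suitably integrable dominating random variable in the completed space $L_G^1$. Verifying that $\bar{\phi}^{\varepsilon}_x - \phi_x(\cdot, X^{\ast}, u^{\ast}) \to 0$ in $M_G^2(0,T)$ typically needs to be reduced, by approximation of $\phi_x$ by cylindrical Lipschitz functions of finitely many increments of $B$, to a capacity estimate. Once this technicality is dispatched, the remainder of the argument is the routine linear-SDE Gronwall calculation.
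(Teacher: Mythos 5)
Your proposal is correct and is essentially the argument of \cite{HJ1}, which is all the paper itself relies on (Proposition \ref{pro-4} is imported without proof): the mean-value decomposition of the coefficients, the closed linear $G$-SDE for $\tilde{X}^{\varepsilon}$ with forcing $R^{\varepsilon}$, the BDG--Gronwall estimate reducing everything to $\mathbb{\hat{E}}[\int_0^T|R^{\varepsilon}_s|^2ds]$, with (A3) giving the uniform bound and (A5) the vanishing limit, is exactly the scheme used there and mirrored in this paper's own proof of Lemma \ref{le-5}. The one step to tighten is the final limit: quasi-sure convergence plus an $L_G^1$ dominator does not by itself license dominated convergence under $\mathbb{\hat{E}}$; the clean route is to split on the sets $\{|X_s^{\varepsilon}-X_s^{\ast}|\leq N\varepsilon\}$ and $\{|u_s-u_s^{\ast}|\leq N\}$, bound the complements via Chebyshev's inequality (yielding $CN^{-2}$ uniformly in $\varepsilon$), and invoke $\lim_{\lambda\rightarrow\infty}\mathbb{\hat{E}}[\int_0^T(\Theta_s-\Theta_s\wedge\lambda)\,ds]=0$ for the dominating process $\Theta=C(1+|\hat{X}|^2+|u-u^{\ast}|^2)\in M_G^1(0,T)$, exactly as is done for $|\Lambda^{\varepsilon}-\Lambda|$ in the proof of Lemma \ref{le-5}.
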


In order to obtain the variational equation for cost functional, we need the
following estimates. For simplicity of presentation, we use the following
notations: for $\alpha \in \lbrack0,1]$,
\[
\varphi_{l}(s,\alpha)=\varphi_{l}(s,X_{s}^{\ast}+\alpha(X_{s}^{\varepsilon
}-X_{s}^{\ast}),Y_{s}^{\ast}+\alpha(Y_{s}^{\varepsilon}-Y_{s}^{\ast}%
),u_{s}^{\ast}+\alpha \varepsilon(u_{s}-u_{s}^{\ast}))\text{,}%
\]%
\[
\varphi_{l}(s)=\varphi_{l}(s,X_{s}^{\ast},Y_{s}^{\ast},u_{s}^{\ast}),
\]
where $\varphi=f$, $g^{ij}$ and $l=x$, $y$, $v$. Set%
\[
\Lambda_{t}^{\varepsilon}=\exp \left(  \int_{0}^{t}\left(  \int_{0}^{1}%
f_{y}(s,\alpha)d\alpha \right)  ds+\sum_{i,j=1}^{d}\int_{0}^{t}\left(  \int
_{0}^{1}g_{y}^{ij}(s,\alpha)d\alpha \right)  d\langle B^{i},B^{j}\rangle
_{s}\right)  ,
\]%
\begin{equation}
\Lambda_{t}=\exp \left(  \int_{0}^{t}f_{y}(s)ds+\sum_{i,j=1}^{d}\int_{0}%
^{t}g_{y}^{ij}(s)d\langle B^{i},B^{j}\rangle_{s}\right)  . \label{ne-19}%
\end{equation}
Since $f_{y}$, $g_{y}^{ij}$ are bounded and $d\langle B^{i}\rangle_{s}\leq
Cds$ for $i$, $j=1$,$\ldots$,$d$, we have $\Lambda_{t}^{\varepsilon}%
+\Lambda_{t}\leq C$ for $t\in \lbrack0,T]$.

\begin{lemma}
\label{le-5}Let assumptions (A1)-(A5) hold. Then

\begin{description}
\item[(1)] $\sup_{t\leq T}\mathbb{\hat{E}}[|Y_{t}^{\varepsilon}-Y_{t}^{\ast
}|]\leq C\varepsilon$, where $C$ is independent of $\varepsilon$.

\item[(2)] $\mathbb{\hat{E}}\left[  |\Phi_{x}(X_{T}^{\ast})\hat{X}%
_{T}||\Lambda_{T}^{\varepsilon}-\Lambda_{T}|+\int_{0}^{T}\Pi_{t}|\Lambda
_{t}^{\varepsilon}-\Lambda_{t}|dt\right]  \rightarrow0$ as $\varepsilon
\rightarrow0$, where
\[
\Pi_{t}=|f_{x}(t)\hat{X}_{t}|+|f_{v}(t)(u_{t}-u_{t}^{\ast})|+\sum_{i,j=1}%
^{d}(|g_{x}^{ij}(t)\hat{X}_{t}|+|g_{v}^{ij}(t)(u_{t}-u_{t}^{\ast})|).
\]

\end{description}
\end{lemma}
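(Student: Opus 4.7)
For Part (1), the plan is to exploit the domination relation (\ref{ne-1}): since both $Y^\varepsilon$ and $Y^\ast$ are $\tilde{\mathbb{E}}_t$-BSDEs with the same structure, their difference satisfies
\[
|Y_t^\varepsilon - Y_t^\ast| \leq \hat{\mathbb{E}}_t\!\left[|\Phi(X_T^\varepsilon) - \Phi(X_T^\ast)| + \int_t^T|f^\varepsilon_s - f^\ast_s|\,ds + \sum_{i,j}\int_t^T|g^{ij,\varepsilon}_s - g^{ij,\ast}_s|\,d\langle B^i,B^j\rangle_s\right].
\]
I would expand each increment by the mean-value theorem and split into terms involving $x,v$-derivatives (which grow at most linearly by (A4)) and $y$-derivatives (which are bounded by (A3)). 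Combining Cauchy--Schwarz with the a priori estimate $\hat{\mathbb{E}}[|X_s^\varepsilon - X_s^\ast|^2]\leq C\varepsilon^2$, a direct consequence of Proposition \ref{pro-4}, and using the sublinear-expectation tower $\hat{\mathbb{E}}[\hat{\mathbb{E}}_t[\cdot]]=\hat{\mathbb{E}}[\cdot]$, one obtains $\hat{\mathbb{E}}[|Y_t^\varepsilon - Y_t^\ast|] \leq C\varepsilon + C\int_t^T \hat{\mathbb{E}}[|Y_s^\varepsilon - Y_s^\ast|]\,ds$. A reverse Gronwall inequality then delivers the linear rate $C\varepsilon$ uniformly in $t\in[0,T]$.

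For Part (2), the starting observation is that $\Lambda_t^\varepsilon$ and $\Lambda_t$ are uniformly bounded by a constant $C$ (as recorded after (\ref{ne-19})). Writing them as exponentials of $A_t^\varepsilon$ and $A_t$ respectively, the elementary inequality $|e^a - e^b|\leq C|a-b|$ on the relevant range reduces matters to controlling $|A_t^\varepsilon - A_t|$. Applying (A5) to the integrands yields
\[
|A_T^\varepsilon - A_T| \leq C\int_0^T \bar\omega\!\left(|X_s^\varepsilon - X_s^\ast| + |Y_s^\varepsilon - Y_s^\ast| + \varepsilon|u_s - u_s^\ast|\right) ds,
\]
and the argument of $\bar\omega$ converges to $0$ in $L^1(\hat{\mathbb{E}})$ by Proposition \ref{pro-4} and Part (1). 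I would then handle the nonlinear modulus by a truncation: split on $\{R_\varepsilon\leq M\}$ and $\{R_\varepsilon>M\}$, use monotonicity of $\bar\omega$ on the first piece and Chebyshev together with the $L^2$-bound on $R_\varepsilon$ on the second, and let $M=M(\varepsilon)\downarrow 0$ at an appropriate rate. This gives $\hat{\mathbb{E}}[|\Lambda_t^\varepsilon - \Lambda_t|]\to 0$, and since the difference is also bounded by $2C$, the convergence self-improves to $L^2(\hat{\mathbb{E}})$.

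To conclude, I would apply Cauchy--Schwarz:
\[
\hat{\mathbb{E}}[|\Phi_x(X_T^\ast)\hat{X}_T|\,|\Lambda_T^\varepsilon - \Lambda_T|] \leq \bigl(\hat{\mathbb{E}}[|\Phi_x(X_T^\ast)\hat{X}_T|^2]\bigr)^{1/2}\bigl(\hat{\mathbb{E}}[|\Lambda_T^\varepsilon - \Lambda_T|^2]\bigr)^{1/2},
\]
where the first factor is finite by (A4) combined with standard $L^p$-estimates on $X^\ast$ and $\hat{X}$ under $\hat{\mathbb{E}}$, and the second factor vanishes by the previous step. The integral term $\int_0^T \Pi_t|\Lambda_t^\varepsilon - \Lambda_t|\,dt$ is treated analogously via Fubini, plus pointwise-in-$t$ convergence together with the bounded-dominated upper bound $C\Pi_t\in L^1([0,T]\times\hat{\mathbb{E}})$.

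The main obstacle is Part (2): in the $G$-framework the usual dominated convergence is delicate because $\bar\omega$ is merely a continuous modulus (not necessarily linear) and the nondominated family $\mathcal{P}$ rules out standard measure-theoretic DCT arguments. The truncation must be calibrated to work uniformly in $t\in[0,T]$ so that it can be combined with Fubini for the $\int_0^T \Pi_t|\Lambda_t^\varepsilon - \Lambda_t|\,dt$ piece.
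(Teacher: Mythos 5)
Your Part (1) is essentially the paper's argument: domination (\ref{ne-1}) plus (A3)--(A4) to bound $|Y^{\varepsilon}_t-Y^{\ast}_t|$ by a conditional $G$-expectation, then Gronwall, Cauchy--Schwarz and the estimates $\sup_t\mathbb{\hat{E}}[|X_t^{\varepsilon}-X_t^{\ast}|^2]\le C\varepsilon^2$ from Proposition \ref{pro-4}. (The paper applies a conditional Gronwall inequality before taking expectations, you apply a scalar Gronwall after; both work.) The core of your Part (2) -- the Lipschitz bound $|\Lambda^{\varepsilon}_t-\Lambda_t|\le C|A^{\varepsilon}_t-A_t|$, assumption (A5), and a truncation/Chebyshev split on the size of the argument of $\bar\omega$ -- also matches the paper's treatment of (\ref{ne-18}).

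The genuine gap is in your final step, where you multiply $|\Lambda^{\varepsilon}-\Lambda|$ back against $|\Phi_x(X_T^{\ast})\hat{X}_T|$ and $\Pi_t$ by Cauchy--Schwarz. This requires $\mathbb{\hat{E}}[|\Phi_x(X_T^{\ast})\hat{X}_T|^2]<\infty$ and $\mathbb{\hat{E}}[\int_0^T\Pi_t^2\,dt]<\infty$, and these are \emph{not} available under (A1)--(A5): by (A4) one has $|\Phi_x(X_T^{\ast})\hat{X}_T|\le L(1+|X_T^{\ast}|)|\hat{X}_T|$, a product of two random variables each of which is only in $L_G^2$ (admissible controls lie in $M_G^2$ only, so $X^{\ast}$ and $\hat{X}$ carry no moments beyond order $2$); similarly $\Pi_t$ is a product of $L^2$ quantities, hence only $L^1$. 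So the "standard $L^p$-estimates" you invoke for the first Cauchy--Schwarz factor do not hold, and the same problem defeats the Fubini/dominated argument for the integral term. The paper sidesteps this by truncating the $\varepsilon$-\emph{independent} factors: since $|\Phi_x(X_T^{\ast})\hat{X}_T|$ and $\Pi_t$ are fixed integrable random variables, $\mathbb{\hat{E}}[(|\Phi_x(X_T^{\ast})\hat{X}_T|-|\Phi_x(X_T^{\ast})\hat{X}_T|\wedge\lambda)+\int_0^T(\Pi_t-\Pi_t\wedge\lambda)\,dt]\to 0$ as $\lambda\to\infty$ uniformly in $\varepsilon$ (using also $|\Lambda^{\varepsilon}-\Lambda|\le C$), and for each fixed $\lambda$ the truncated factors are bounded, so only $\mathbb{\hat{E}}[|\Lambda^{\varepsilon}_T-\Lambda_T|+\int_0^T|\Lambda^{\varepsilon}_t-\Lambda_t|\,dt]\to 0$ is needed -- which is exactly what your modulus-of-continuity argument delivers. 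Replace the Cauchy--Schwarz step by this truncation and your proof closes.
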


\begin{proof}
By (\ref{ne-1}) and (A4), we get%
\[
|Y_{t}^{\varepsilon}-Y_{t}^{\ast}|\leq C\mathbb{\hat{E}}_{t}\left[
\xi^{\varepsilon}|X_{T}^{\varepsilon}-X_{T}^{\ast}|+\int_{t}^{T}%
[|Y_{s}^{\varepsilon}-Y_{s}^{\ast}|+\eta_{s}^{\varepsilon}(|X_{s}%
^{\varepsilon}-X_{s}^{\ast}|+\varepsilon|u_{s}-u_{s}^{\ast}|)]ds\right]  ,
\]
where $\xi^{\varepsilon}=1+|X_{T}^{\varepsilon}|+|X_{T}^{\ast}|$, $\eta
_{s}^{\varepsilon}=1+|X_{s}^{\varepsilon}|+|X_{s}^{\ast}|+|u_{s}|+|u_{s}%
^{\ast}|$, the constant $C>0$ is independent of $\varepsilon$. By the Gronwall
inequality (see Theorem 3.10 in \cite{HJPS}), we obtain%
\[
|Y_{t}^{\varepsilon}-Y_{t}^{\ast}|\leq C\mathbb{\hat{E}}_{t}\left[
\xi^{\varepsilon}|X_{T}^{\varepsilon}-X_{T}^{\ast}|+\int_{t}^{T}\eta
_{s}^{\varepsilon}(|X_{s}^{\varepsilon}-X_{s}^{\ast}|+\varepsilon|u_{s}%
-u_{s}^{\ast}|)ds\right]  .
\]
Thus%
\begin{align*}
&  \sup_{t\leq T}\mathbb{\hat{E}}[|Y_{t}^{\varepsilon}-Y_{t}^{\ast}|]\\
&  \leq C\mathbb{\hat{E}}\left[  \xi^{\varepsilon}|X_{T}^{\varepsilon}%
-X_{T}^{\ast}|+\int_{0}^{T}\eta_{s}^{\varepsilon}(|X_{s}^{\varepsilon}%
-X_{s}^{\ast}|+\varepsilon|u_{s}-u_{s}^{\ast}|)ds\right] \\
&  \leq C\left[  (\mathbb{\hat{E}}[|\xi^{\varepsilon}|^{2}])^{1/2}%
(\mathbb{\hat{E}}[|X_{T}^{\varepsilon}-X_{T}^{\ast}|^{2}])^{1/2}+\sqrt{2}%
\int_{0}^{T}(\mathbb{\hat{E}}[|\eta_{s}^{\varepsilon}|^{2}])^{1/2}%
(\mathbb{\hat{E}}[|X_{s}^{\varepsilon}-X_{s}^{\ast}|^{2}+\varepsilon^{2}%
|u_{s}-u_{s}^{\ast}|^{2}])^{1/2}ds\right]  .
\end{align*}
By Proposition \ref{pro-4}, we deduce%
\[
\sup_{t\leq T}\mathbb{\hat{E}}[|X_{t}^{\varepsilon}|^{2}]\leq C\text{ and
}\sup_{t\leq T}\mathbb{\hat{E}}[|X_{t}^{\varepsilon}-X_{t}^{\ast}|^{2}]\leq
C\varepsilon^{2},
\]
where the constant $C>0$ is independent of $\varepsilon$. Thus we obtain (1).

Since $\Lambda_{t}^{\varepsilon}+\Lambda_{t}\leq C$ for $t\in \lbrack0,T]$ and%
\[
\lim_{\lambda \rightarrow \infty}\mathbb{\hat{E}}\left[  (|\Phi_{x}(X_{T}^{\ast
})\hat{X}_{T}|-|\Phi_{x}(X_{T}^{\ast})\hat{X}_{T}|\wedge \lambda)+\int_{0}%
^{T}(\Pi_{t}-\Pi_{t}\wedge \lambda)dt\right]  =0,
\]
(2) is equivalent to%
\begin{equation}
\lim_{\varepsilon \rightarrow0}\mathbb{\hat{E}}\left[  |\Lambda_{T}%
^{\varepsilon}-\Lambda_{T}|+\int_{0}^{T}|\Lambda_{t}^{\varepsilon}-\Lambda
_{t}|dt\right]  =0. \label{ne-18}%
\end{equation}
It follows from $\Lambda^{\varepsilon}+\Lambda \leq C$ that
\[
|\Lambda_{t}^{\varepsilon}-\Lambda_{t}|\leq C\left(  \int_{0}^{t}\int_{0}%
^{1}|f_{y}(s,\alpha)-f_{y}(s)|d\alpha ds+\sum_{i,j=1}^{d}\int_{0}^{t}\int
_{0}^{1}|g_{y}^{ij}(s,\alpha)-g_{y}^{ij}(s)|d\alpha ds\right)  ,
\]
where the constant $C>0$ is independent of $\varepsilon$. For each fixed
$N>1$, set%
\[
A_{s}=\{|X_{s}^{\varepsilon}-X_{s}^{\ast}|\leq N\varepsilon \} \text{, }%
C_{s}=\{|Y_{s}^{\varepsilon}-Y_{s}^{\ast}|\leq N\varepsilon \} \text{, }%
D_{s}=\{|u_{s}-u_{s}^{\ast}|\leq N\}.
\]
By (A5), we obtain%
\[
|\Lambda_{t}^{\varepsilon}-\Lambda_{t}|\leq C\left(  \bar{\omega
}(3N\varepsilon)+\int_{0}^{t}(I_{A_{s}^{c}}+I_{C_{s}^{c}}+I_{D_{s}^{c}%
})ds\right)  .
\]
Thus by (1) we get%
\begin{align*}
\mathbb{\hat{E}}[|\Lambda_{t}^{\varepsilon}-\Lambda_{t}|]  &  \leq C\left(
\bar{\omega}(3N\varepsilon)+\int_{0}^{t}N^{-1}\mathbb{\hat{E}}[\varepsilon
^{-1}|X_{s}^{\varepsilon}-X_{s}^{\ast}|+\varepsilon^{-1}|Y_{s}^{\varepsilon
}-Y_{s}^{\ast}|+|u_{s}-u_{s}^{\ast}|]ds\right) \\
&  \leq C(\bar{\omega}(3N\varepsilon)+N^{-1}),
\end{align*}
where the constant $C>0$ is independent of $\varepsilon$ and $N$. Taking
$\varepsilon \rightarrow0$ and then $N\rightarrow \infty$, we obtain
(\ref{ne-18}), which implies (2).
\end{proof}

The following estimates are essentially Lemma 4.5 of \cite{HJ1}, the only
difference in the proof is that we use the estimate (1) of $|Y_{t}%
^{\varepsilon}-Y_{t}^{\ast}|$ in Lemma \ref{le-5}.

\begin{lemma}
\label{le-6}(\cite{HJ1})Let assumptions (A1)-(A5) hold. Then

\begin{description}
\item[(1)] $\mathbb{\hat{E}}[|I_{1}^{\varepsilon}|]=o(\varepsilon)$, where
$I_{1}^{\varepsilon}=\Phi(X_{T}^{\varepsilon})-\Phi(X_{T}^{\ast}%
)-\varepsilon \Phi_{x}(X_{T}^{\ast})\hat{X}_{T}$.

\item[(2)] $\mathbb{\hat{E}}\left[  \int_{0}^{T}\left(  |I_{2}^{\varepsilon
}(s)|+\sum_{i,j=1}^{d}|I_{ij}^{\varepsilon}(s)|\right)  ds\right]
=o(\varepsilon)$, where
\[
I_{2}^{\varepsilon}(s)=\int_{0}^{1}f_{x}(s,\alpha)d\alpha(X_{s}^{\varepsilon
}-X_{s}^{\ast})-\varepsilon f_{x}(s)\hat{X}_{s}+\varepsilon \int_{0}^{1}%
(f_{v}(s,\alpha)-f_{v}(s))d\alpha(u_{s}-u_{s}^{\ast}),
\]%
\[
I_{ij}^{\varepsilon}(s)=\int_{0}^{1}g_{x}^{ij}(s,\alpha)d\alpha(X_{s}%
^{\varepsilon}-X_{s}^{\ast})-\varepsilon g_{x}^{ij}(s)\hat{X}_{s}%
+\varepsilon \int_{0}^{1}(g_{v}^{ij}(s,\alpha)-g_{v}^{ij}(s))d\alpha
(u_{s}-u_{s}^{\ast}).
\]

\end{description}
\end{lemma}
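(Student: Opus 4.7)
The plan is the standard Taylor expansion plus truncation argument, specialized to the $G$-framework. Throughout, write $\Delta X_s = X_s^{\varepsilon} - X_s^{\ast}$ and $\Delta Y_s = Y_s^{\varepsilon} - Y_s^{\ast}$. The only new ingredient relative to \cite[Lemma 4.5]{HJ1} is the use of the $L_G^1$-estimate for $|\Delta Y|$ from Lemma \ref{le-5}(1); this compensates for the fact that the derivatives $f_x, f_v, g_x^{ij}, g_v^{ij}$ are evaluated at intermediate points depending on $Y$ as well as on $X$ and $v$.

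For (1), the fundamental theorem of calculus combined with the definition of $\tilde X_T^{\varepsilon}$ in Proposition \ref{pro-4} gives
\[
I_1^{\varepsilon} = \int_0^1 [\Phi_x(X_T^{\ast} + \alpha \Delta X_T) - \Phi_x(X_T^{\ast})]\,d\alpha \cdot \Delta X_T + \varepsilon \Phi_x(X_T^{\ast})\tilde X_T^{\varepsilon}.
\]
The second summand, after division by $\varepsilon$, tends to $0$ in $\mathbb{\hat{E}}[|\cdot|]$ by Cauchy--Schwarz, the linear growth (A4) on $\Phi_x$, and $\mathbb{\hat{E}}[|\tilde X_T^{\varepsilon}|^2]\to 0$ from Proposition \ref{pro-4}. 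For the first summand, fix $N>1$ and split on the event $A = \{|\Delta X_T|\leq N\varepsilon\}$: on $A$, (A5) gives an upper bound $\bar\omega(N\varepsilon)|\Delta X_T|$, contributing $O(\bar\omega(N\varepsilon)\varepsilon)$ in $\mathbb{\hat{E}}$; on $A^c$, (A4) produces a $C(1+|X_T^{\ast}|+|X_T^{\varepsilon}|)|\Delta X_T|$ bound, and Cauchy--Schwarz combined with $\mathbb{\hat{E}}[|\Delta X_T|^2]\leq C\varepsilon^2$ together with the Markov-type capacity bound $c(A^c)\leq C/N^2$ yields a contribution of order $\varepsilon/N$. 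Sending $\varepsilon \to 0$ and then $N \to \infty$ gives $\mathbb{\hat{E}}[|I_1^{\varepsilon}|] = o(\varepsilon)$.

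For (2), apply exactly the same decomposition to $\varphi \in \{f, g^{ij}\}$:
\[
\int_0^1 \varphi_x(s,\alpha)\,d\alpha \cdot \Delta X_s - \varepsilon \varphi_x(s)\hat X_s = \int_0^1 [\varphi_x(s,\alpha) - \varphi_x(s)]\,d\alpha \cdot \Delta X_s + \varepsilon \varphi_x(s)\tilde X_s^{\varepsilon},
\]
while the $v$-contribution is already in the ready form $\varepsilon\int_0^1[\varphi_v(s,\alpha) - \varphi_v(s)]\,d\alpha\cdot(u_s - u_s^{\ast})$. Since the intermediate argument of $\varphi_x, \varphi_v$ now contains $Y_s^{\ast} + \alpha \Delta Y_s$, (A5) produces a modulus in $|\Delta X_s|+|\Delta Y_s|+\varepsilon|u_s - u_s^{\ast}|$. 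The truncation event is thus $A_s \cap C_s \cap D_s$ with $A_s, C_s, D_s$ exactly as in the proof of Lemma \ref{le-5}; the capacities $c(A_s^c), c(C_s^c), c(D_s^c)$ are each $O(1/N)$ by Markov's inequality, using Proposition \ref{pro-4} for $X$, Lemma \ref{le-5}(1) for $Y$ (the new input), and the $M_G^2$-integrability of $u, u^{\ast}$ for the control increment. Integrating over $s \in [0,T]$ and using the sub-additivity of $\mathbb{\hat{E}}$, one obtains a bound of the form $C(\bar\omega(3N\varepsilon) + 1/N)\varepsilon$, and the same order-of-limits argument ($\varepsilon \to 0$, then $N \to \infty$) yields (2). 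The main obstacle is making this joint truncation uniform in $s$ and simultaneously in the three arguments of $\bar\omega$, which is precisely what the new $Y$-estimate of Lemma \ref{le-5}(1) makes possible.
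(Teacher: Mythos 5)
Your proposal is correct and follows essentially the same route as the paper, which simply defers to Lemma 4.5 of \cite{HJ1} and notes that the only modification is to invoke the estimate for $|Y_t^{\varepsilon}-Y_t^{\ast}|$ from Lemma \ref{le-5}(1); your Taylor-expansion-plus-truncation argument, with the events $A_s$, $C_s$, $D_s$ and the order of limits $\varepsilon\rightarrow 0$ then $N\rightarrow\infty$, is exactly that argument, and you correctly identify the $Y$-estimate as the one new input.
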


Now we give the variational equation for cost functional. Set%
\begin{equation}
\mathcal{P}^{\ast}=\{P\in \mathcal{P}:E_{P}[\xi^{\ast}]-\alpha_{0}%
^{T}(P)=\mathbb{\tilde{E}}[\xi^{\ast}]\}, \label{ne-20}%
\end{equation}%
\begin{equation}
\mathcal{P}_{\varepsilon}^{\ast}=\{P\in \mathcal{P}:E_{P}[\xi^{\varepsilon
}]-\alpha_{0}^{T}(P)=\mathbb{\tilde{E}}[\xi^{\varepsilon}]\}, \label{ne-21}%
\end{equation}
where%
\[
\xi^{\ast}=\Phi(X_{T}^{\ast})+\int_{0}^{T}f(s,X_{s}^{\ast},Y_{s}^{\ast}%
,u_{s}^{\ast})ds+\sum_{i,j=1}^{d}\int_{0}^{T}g^{ij}(s,X_{s}^{\ast},Y_{s}%
^{\ast},u_{s}^{\ast})d\langle B^{i},B^{j}\rangle_{s},
\]%
\[
\xi^{\varepsilon}=\Phi(X_{T}^{\varepsilon})+\int_{0}^{T}f(s,X_{s}%
^{\varepsilon},Y_{s}^{\varepsilon},u_{s}^{\varepsilon})ds+\sum_{i,j=1}^{d}%
\int_{0}^{T}g^{ij}(s,X_{s}^{\varepsilon},Y_{s}^{\varepsilon},u_{s}%
^{\varepsilon})d\langle B^{i},B^{j}\rangle_{s}.
\]

\begin{theorem}
\label{th-7}Let assumptions (A1)-(A5) hold and let $u^{\ast}\in \mathcal{U}%
[0,T]$ be an optimal control for control problem (\ref{ne-4}). Then, for each
$u\in \mathcal{U}[0,T]$, there exists a $P^{u}\in \mathcal{P}^{\ast}$ satisfying%
\[
\lim_{\varepsilon \rightarrow0}\frac{J(u^{\varepsilon})-J(u^{\ast}%
)}{\varepsilon}=\sup_{P\in \mathcal{P}^{\ast}}E_{P}[L^{u}]=E_{P^{u}}[L^{u}],
\]
where $(\Lambda_{t})_{t\leq T}$ is defined in (\ref{ne-19}) and%
\[
L^{u}=\Phi_{x}(X_{T}^{\ast})\hat{X}_{T}\Lambda_{T}+\int_{0}^{T}[f_{x}%
(s)\hat{X}_{s}+f_{v}(s)(u_{s}-u_{s}^{\ast})]\Lambda_{s}ds+\sum_{i,j=1}^{d}%
\int_{0}^{T}[g_{x}^{ij}(s)\hat{X}_{s}+g_{v}^{ij}(s)(u_{s}-u_{s}^{\ast
})]\Lambda_{s}d\langle B^{i},B^{j}\rangle_{s}.
\]

\end{theorem}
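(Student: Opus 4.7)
My plan is to establish matching one-sided bounds on $(J(u^\varepsilon)-J(u^*))/\varepsilon$ and then pass to a weak limit. Since $Y_0^u = \mathbb{\tilde{E}}[\xi^u]$ (set $t=0$ in (\ref{ne-3})), the representation (\ref{ne-8}) gives two key estimates: for any $P\in\mathcal{P}^*$, $\mathbb{\tilde{E}}[\xi^*]=E_P[\xi^*]-\alpha_0^T(P)$ and $\mathbb{\tilde{E}}[\xi^\varepsilon]\geq E_P[\xi^\varepsilon]-\alpha_0^T(P)$, so
\[
J(u^\varepsilon)-J(u^*) \geq E_P[\xi^\varepsilon-\xi^*];
\]
picking $P^\varepsilon\in\mathcal{P}_\varepsilon^*$ (nonempty by Theorem \ref{th-re-2}) and reversing the roles gives the opposite estimate
\[
J(u^\varepsilon)-J(u^*) \leq E_{P^\varepsilon}[\xi^\varepsilon-\xi^*].
\]

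The core analytic step is to show, for $Q$ equal to either $P\in\mathcal{P}^*$ or $P^\varepsilon\in\mathcal{P}_\varepsilon^*$, that $E_Q[\xi^\varepsilon-\xi^*]=\varepsilon E_Q[L^u]+o(\varepsilon)$. Taylor expanding in $(x,y,v)$ and using Lemma \ref{le-6} to absorb $I_1^\varepsilon,I_2^\varepsilon(s),I_{ij}^\varepsilon(s)$ leaves the principal linear terms together with the troublesome $Y$-coupling
\[
R^\varepsilon=\int_0^T\!\!\Bigl(\int_0^1 f_y(s,\alpha)d\alpha\Bigr)(Y_s^\varepsilon-Y_s^*)ds+\sum_{i,j=1}^d\int_0^T\!\!\Bigl(\int_0^1 g_y^{ij}(s,\alpha)d\alpha\Bigr)(Y_s^\varepsilon-Y_s^*)d\langle B^i,B^j\rangle_s.
\]
The coupling $R^\varepsilon$ is cancelled by the integrating factor $\Lambda$. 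For any $Q$ at which $\mathbb{\tilde{E}}[\xi^u]$ is attained, Proposition \ref{new-pro-4} yields the conditional identity $\mathbb{\tilde{E}}_t[\xi^u]=E_Q[\xi^u|\mathcal{F}_t]-\alpha_t^T(Q)$, so $Y_t^u+\int_0^t f\,ds+\sum\int_0^t g^{ij}d\langle B^i,B^j\rangle+\alpha_t^T(Q)$ is a $Q$-martingale. Discretizing $[0,T]$ on a mesh $\{t_k\}$, multiplying the $Q$-martingale increments of $Y^\varepsilon-Y^*$ by $\Lambda_{t_k}^\varepsilon$ and telescoping, the $f_y,g_y^{ij}$ contributions cancel against the multiplicative growth of $\Lambda^\varepsilon$; refining the mesh and then using Lemma \ref{le-5}(2) to swap $\Lambda^\varepsilon$ for $\Lambda$ produces the desired one-sided inequality (this is the inequality (\ref{ne-25}) referenced in the introduction).

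Finally, the $P^\varepsilon$ lie in the weakly compact set $\mathcal{P}$, so along a subsequence $P^{\varepsilon_n}\rightharpoonup P^u$. Proposition \ref{pro-4} with Lemma \ref{le-5}(1) gives $\xi^{\varepsilon_n}\to\xi^*$ in $L_G^1(\Omega_T)$, so Proposition \ref{pro-3}(4) forces $P^u\in\mathcal{P}^*$; moreover $E_{P^{\varepsilon_n}}[L^u]\to E_{P^u}[L^u]$ by weak convergence together with $L^u\in L_G^1(\Omega_T)$. Chaining the two bounds,
\[
\sup_{P\in\mathcal{P}^*}E_P[L^u] \leq \liminf_{\varepsilon\to 0}\frac{J(u^\varepsilon)-J(u^*)}{\varepsilon} \leq \limsup_{\varepsilon\to 0}\frac{J(u^\varepsilon)-J(u^*)}{\varepsilon} \leq E_{P^u}[L^u] \leq \sup_{P\in\mathcal{P}^*}E_P[L^u],
\]
collapses to equality and exhibits $P^u$ as a maximizer. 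The \textbf{main obstacle} is the discretization step: because well-posedness of a $(Y,Z,K)$-type decomposition for $\tilde G$-BSDEs is open, It\^o's formula cannot be applied directly to $(Y^\varepsilon-Y^*)\Lambda_t^\varepsilon$ under $P^\varepsilon$, and the variation-of-constants argument must be carried out on finite partitions with uniform-in-$\varepsilon$ control of the discretization remainder.
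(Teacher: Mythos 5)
Your overall architecture coincides with the paper's: a lower bound from an arbitrary $P\in\mathcal{P}^{\ast}$, an upper bound from $P^{\varepsilon}\in\mathcal{P}_{\varepsilon}^{\ast}$, weak compactness plus Proposition \ref{pro-3}(4) to identify the limit point $P^{u}\in\mathcal{P}^{\ast}$, and a final squeeze identical to (\ref{ne-27})--(\ref{ne-28}); you also correctly locate the crux in a discretization argument for (\ref{ne-25}). The gap is in how you package the central step. You first discard the zero-time defect by writing $J(u^{\varepsilon})-J(u^{\ast})\geq E_{P}[\xi^{\varepsilon}-\xi^{\ast}]$ and then assert the \emph{equality} $E_{P}[\xi^{\varepsilon}-\xi^{\ast}]=\varepsilon E_{P}[L^{u}]+o(\varepsilon)$. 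Introducing $\delta_{t}=\mathbb{\tilde{E}}_{t}[\xi^{\varepsilon}]-\mathbb{\tilde{E}}_{t}[\xi^{\ast}]-E_{P}[\xi^{\varepsilon}-\xi^{\ast}|\mathcal{F}_{t}]\geq0$ as in (\ref{ne-22}), the variation-of-constants identity (\ref{ne-24}) gives
\[
E_{P}[\xi^{\varepsilon}-\xi^{\ast}]=\hat{Y}_{0}^{\varepsilon}-\delta_{0}=\varepsilon E_{P}[L^{u}]+E_{P}\left[\int_{0}^{T}\tilde{f}_{y}(s)\delta_{s}\Lambda_{s}^{\varepsilon}ds+\sum_{i,j=1}^{d}\int_{0}^{T}\tilde{g}_{y}^{ij}(s)\delta_{s}\Lambda_{s}^{\varepsilon}d\langle B^{i},B^{j}\rangle_{s}\right]+o(\varepsilon).
\]
The residual $\delta$-integral is $O(\varepsilon)$, not $o(\varepsilon)$, and carries no sign of its own because $\tilde{f}_{y}$ and $\tilde{g}_{y}^{ij}$ may be negative; only the combination $\delta_{0}+E_{P}\bigl[\int_{0}^{T}\tilde{f}_{y}(s)\delta_{s}\Lambda_{s}^{\varepsilon}ds+\cdots\bigr]$ is signed, and that is exactly the content of (\ref{ne-25}). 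So your stated equality is false, and the chain (a) drop $\delta_{0}$, (b) expand $E_{P}[\xi^{\varepsilon}-\xi^{\ast}]$, cannot close: the one-sided expansion must be proved for $\hat{Y}_{0}^{\varepsilon}=J(u^{\varepsilon})-J(u^{\ast})$ itself, keeping $\delta_{0}$ in the accounting.

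Relatedly, the positivity in your telescoping is not a consequence of "martingale increments" alone. After Abel summation one is left with $\sum_{k}E_{P}\bigl[\Lambda_{t_{k}^{N}}^{\varepsilon}\bigl(\delta_{t_{k}^{N}}-E_{P}[\delta_{t_{k+1}^{N}}|\mathcal{F}_{t_{k}^{N}}]\bigr)\bigr]$, and the $P$-supermartingale property of $\delta$ is where the structure of the penalty enters: it requires the cocycle identity $\alpha_{t}^{T}(P)=\alpha_{t}^{s}(P)+E_{P}[\alpha_{s}^{T}(P)|\mathcal{F}_{t}]$ of Remark \ref{new-re-4}, applied together with the definition of $\alpha_{t_{k}^{N}}^{t_{k+1}^{N}}(P)$ evaluated at $Y=\mathbb{\tilde{E}}_{t_{k+1}^{N}}[\xi^{\varepsilon}]$. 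Your sketch invokes Proposition \ref{new-pro-4} but never this decomposition, so the sign of each summand is not justified as written. With these two repairs your argument becomes the paper's proof; the remaining steps (Lemmas \ref{le-5} and \ref{le-6}, the role swap for $P^{\varepsilon}$, and the weak-limit identification of $P^{u}$) are handled as in the paper.
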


\begin{proof}
For each $P\in \mathcal{P}^{\ast}$, by Proposition \ref{new-pro-4} and Theorem
\ref{new-th-re-1}, we have%
\begin{equation}
\mathbb{\tilde{E}}_{t}[\xi^{\ast}]=E_{P}[\xi^{\ast}|\mathcal{F}_{t}%
]-\alpha_{t}^{T}(P),\text{ }\mathbb{\tilde{E}}_{t}[\xi^{\varepsilon}]\geq
E_{P}[\xi^{\varepsilon}|\mathcal{F}_{t}]-\alpha_{t}^{T}(P),\text{
}P\text{-a.s.} \label{ne-23}%
\end{equation}
Note that%
\begin{align*}
&  f(s,X_{s}^{\varepsilon},Y_{s}^{\varepsilon},u_{s}^{\varepsilon}%
)-f(s,X_{s}^{\ast},Y_{s}^{\ast},u_{s}^{\ast})\\
&  =\int_{0}^{1}f_{x}(s,\alpha)d\alpha(X_{s}^{\varepsilon}-X_{s}^{\ast}%
)+\int_{0}^{1}f_{y}(s,\alpha)d\alpha(Y_{s}^{\varepsilon}-Y_{s}^{\ast
})+\varepsilon \int_{0}^{1}f_{v}(s,\alpha)d\alpha(u_{s}-u_{s}^{\ast}).
\end{align*}
Then we get $P$-a.s.%
\[
\hat{Y}_{t}^{\varepsilon}\geq E_{P}\left[  \left.  I_{1}^{\varepsilon
}+\varepsilon \tilde{L}_{t}^{u}+\int_{t}^{T}\left[  \tilde{f}_{y}(s)\hat{Y}%
_{s}^{\varepsilon}+I_{2}^{\varepsilon}(s)\right]  ds+\sum_{i,j=1}^{d}\int
_{t}^{T}\left[  \tilde{g}_{y}^{ij}(s)\hat{Y}_{s}^{\varepsilon}+I_{ij}%
^{\varepsilon}(s)\right]  d\langle B^{i},B^{j}\rangle_{s}\right \vert
\mathcal{F}_{t}\right]  ,
\]
where $\hat{Y}_{t}^{\varepsilon}=Y_{t}^{\varepsilon}-Y_{t}^{\ast}$, $\tilde
{f}_{y}(s)=\int_{0}^{1}f_{y}(s,\alpha)d\alpha$, $\tilde{g}_{y}^{ij}%
(s)=\int_{0}^{1}g_{y}^{ij}(s,\alpha)d\alpha$, $I_{1}^{\varepsilon}$,
$I_{2}^{\varepsilon}(s)$ and $I_{ij}^{\varepsilon}(s)$ are defined in Lemma
\ref{le-6},
\[
\tilde{L}_{t}^{u}=\Phi_{x}(X_{T}^{\ast})\hat{X}_{T}+\int_{t}^{T}[f_{x}%
(s)\hat{X}_{s}+f_{v}(s)(u_{s}-u_{s}^{\ast})]ds+\sum_{i,j=1}^{d}\int_{t}%
^{T}[g_{x}^{ij}(s)\hat{X}_{s}+g_{v}^{ij}(s)(u_{s}-u_{s}^{\ast})]d\langle
B^{i},B^{j}\rangle_{s}.
\]
For $t\in \lbrack0,T]$, set%
\begin{equation}
\delta_{t}=\mathbb{\tilde{E}}_{t}[\xi^{\varepsilon}]-\mathbb{\tilde{E}}%
_{t}[\xi^{\ast}]-E_{P}[\xi^{\varepsilon}-\xi^{\ast}|\mathcal{F}_{t}%
]\geq0,\text{ }P\text{-a.s.} \label{ne-22}%
\end{equation}
Then we obtain $P$-a.s.%
\[
\hat{Y}_{t}^{\varepsilon}-\delta_{t}=E_{P}\left[  \left.  I_{1}^{\varepsilon
}+\varepsilon \tilde{L}_{t}^{u}+\int_{t}^{T}\left[  \tilde{f}_{y}(s)\hat{Y}%
_{s}^{\varepsilon}+I_{2}^{\varepsilon}(s)\right]  ds+\sum_{i,j=1}^{d}\int
_{t}^{T}\left[  \tilde{g}_{y}^{ij}(s)\hat{Y}_{s}^{\varepsilon}+I_{ij}%
^{\varepsilon}(s)\right]  d\langle B^{i},B^{j}\rangle_{s}\right \vert
\mathcal{F}_{t}\right]  .
\]
By the same proof as Lemma 4.6 in \cite{HSW}, we get%
\begin{equation}
\hat{Y}_{0}^{\varepsilon}=\delta_{0}+E_{P}\left[  I_{1}^{\varepsilon}%
\Lambda_{T}^{\varepsilon}+\varepsilon L_{\varepsilon}^{u}+\int_{0}^{T}\left[
\tilde{f}_{y}(s)\delta_{s}+I_{2}^{\varepsilon}(s)\right]  \Lambda
_{s}^{\varepsilon}ds+\sum_{i,j=1}^{d}\int_{0}^{T}\left[  \tilde{g}_{y}%
^{ij}(s)\delta_{s}+I_{ij}^{\varepsilon}(s)\right]  \Lambda_{s}^{\varepsilon
}d\langle B^{i},B^{j}\rangle_{s}\right]  , \label{ne-24}%
\end{equation}
where%
\[
L_{\varepsilon}^{u}=\Phi_{x}(X_{T}^{\ast})\hat{X}_{T}\Lambda_{T}^{\varepsilon
}+\int_{0}^{T}[f_{x}(s)\hat{X}_{s}+f_{v}(s)(u_{s}-u_{s}^{\ast})]\Lambda
_{s}^{\varepsilon}ds+\sum_{i,j=1}^{d}\int_{0}^{T}[g_{x}^{ij}(s)\hat{X}%
_{s}+g_{v}^{ij}(s)(u_{s}-u_{s}^{\ast})]\Lambda_{s}^{\varepsilon}d\langle
B^{i},B^{j}\rangle_{s}.
\]

Now we prove that
\begin{equation}
\delta_{0}+E_{P}\left[  \int_{0}^{T}\tilde{f}_{y}(s)\delta_{s}\Lambda
_{s}^{\varepsilon}ds+\sum_{i,j=1}^{d}\int_{0}^{T}\tilde{g}_{y}^{ij}%
(s)\delta_{s}\Lambda_{s}^{\varepsilon}d\langle B^{i},B^{j}\rangle_{s}\right]
\geq0. \label{ne-25}%
\end{equation}
For each $N\geq1$, set $t_{k}^{N}=N^{-1}kT$ for $k=0$,$\ldots$,$N$, and
$\delta_{s}^{N}=\sum_{k=1}^{N}\delta_{t_{k}^{N}}I_{(t_{k-1}^{N},t_{k}^{N}%
]}(s)$. By Lemma 2.16 in \cite{HSW} and $(E_{P}[\xi^{\varepsilon}-\xi^{\ast
}|\mathcal{F}_{t}])_{t\leq T}$ is uniformly integrable under $P$, we obtain%
\begin{align*}
&  \lim_{N\rightarrow \infty}E_{P}\left[  \int_{0}^{T}\tilde{f}_{y}%
(s)\delta_{s}^{N}\Lambda_{s}^{\varepsilon}ds+\sum_{i,j=1}^{d}\int_{0}%
^{T}\tilde{g}_{y}^{ij}(s)\delta_{s}^{N}\Lambda_{s}^{\varepsilon}d\langle
B^{i},B^{j}\rangle_{s}\right] \\
&  =E_{P}\left[  \int_{0}^{T}\tilde{f}_{y}(s)\delta_{s}\Lambda_{s}%
^{\varepsilon}ds+\sum_{i,j=1}^{d}\int_{0}^{T}\tilde{g}_{y}^{ij}(s)\delta
_{s}\Lambda_{s}^{\varepsilon}d\langle B^{i},B^{j}\rangle_{s}\right]  .
\end{align*}
Since $\int_{t_{k-1}^{N}}^{t_{k}^{N}}\tilde{f}_{y}(s)\Lambda_{s}^{\varepsilon
}ds+\sum_{i,j=1}^{d}\int_{t_{k-1}^{N}}^{t_{k}^{N}}\tilde{g}_{y}^{ij}%
(s)\Lambda_{s}^{\varepsilon}d\langle B^{i},B^{j}\rangle_{s}=\Lambda_{t_{k}%
^{N}}^{\varepsilon}-\Lambda_{t_{k-1}^{N}}^{\varepsilon}$ and $\delta_{T}=0$,
we get%
\begin{align*}
&  \delta_{0}+E_{P}\left[  \int_{0}^{T}\tilde{f}_{y}(s)\delta_{s}^{N}%
\Lambda_{s}^{\varepsilon}ds+\sum_{i,j=1}^{d}\int_{0}^{T}\tilde{g}_{y}%
^{ij}(s)\delta_{s}^{N}\Lambda_{s}^{\varepsilon}d\langle B^{i},B^{j}\rangle
_{s}\right] \\
&  =\delta_{0}+E_{P}\left[  \sum_{k=1}^{N}\delta_{t_{k}^{N}}\left(
\Lambda_{t_{k}^{N}}^{\varepsilon}-\Lambda_{t_{k-1}^{N}}^{\varepsilon}\right)
\right] \\
&  =\sum_{k=0}^{N-1}E_{P}\left[  \Lambda_{t_{k}^{N}}^{\varepsilon}\left(
\delta_{t_{k}^{N}}-E_{P}[\delta_{t_{k+1}^{N}}|\mathcal{F}_{t_{k}^{N}}]\right)
\right]  .
\end{align*}
By (\ref{ne-23}) and (\ref{ne-22}), we have $\delta_{t}=\mathbb{\tilde{E}}%
_{t}[\xi^{\varepsilon}]-E_{P}[\xi^{\varepsilon}|\mathcal{F}_{t}]+\alpha
_{t}^{T}(P)$, $P$-a.s. It follows from Remark \ref{new-re-4} that%
\[
\delta_{t_{k}^{N}}-E_{P}[\delta_{t_{k+1}^{N}}|\mathcal{F}_{t_{k}^{N}}%
]=\alpha_{t_{k}^{N}}^{t_{k+1}^{N}}(P)-\left(  E_{P}[\mathbb{\tilde{E}%
}_{t_{k+1}^{N}}[\xi^{\varepsilon}]|\mathcal{F}_{t_{k}^{N}}]-\mathbb{\tilde{E}%
}_{t_{k}^{N}}[\mathbb{\tilde{E}}_{t_{k+1}^{N}}[\xi^{\varepsilon}]]\right)
\geq0,\text{ }P\text{-a.s.}%
\]
Thus we obtain $E_{P}\left[  \Lambda_{t_{k}^{N}}^{\varepsilon}\left(
\delta_{t_{k}^{N}}-E_{P}[\delta_{t_{k+1}^{N}}|\mathcal{F}_{t_{k}^{N}}]\right)
\right]  \geq0$ for $k\leq N-1$, which implies (\ref{ne-25}) by taking
$N\rightarrow \infty$. By (\ref{ne-24}), (\ref{ne-25}), Lemmas \ref{le-5} and
\ref{le-6}, we get%
\begin{equation}
J(u^{\varepsilon})-J(u^{\ast})=\hat{Y}_{0}^{\varepsilon}\geq \varepsilon
E_{P}[L^{u}]+o(\varepsilon). \label{ne-26}%
\end{equation}
Thus we obtain%
\begin{equation}
\underset{\varepsilon \rightarrow0}{\lim \inf}\frac{J(u^{\varepsilon}%
)-J(u^{\ast})}{\varepsilon}\geq \sup_{P\in \mathcal{P}^{\ast}}E_{P}[L^{u}].
\label{ne-27}%
\end{equation}

Since $\mathcal{P}$ is weakly compact, we can choose $\varepsilon
_{k}\rightarrow0$ and $P_{k}\in \mathcal{P}_{\varepsilon_{k}}^{\ast}$ such that
$(P_{k})_{k\geq1}$ converges weakly to $P^{u}\in \mathcal{P}$ and%
\[
\underset{\varepsilon \rightarrow0}{\lim \sup}\frac{J(u^{\varepsilon}%
)-J(u^{\ast})}{\varepsilon}=\lim_{k\rightarrow \infty}\frac{J(u^{\varepsilon
_{k}})-J(u^{\ast})}{\varepsilon_{k}}.
\]
By (4) of Proposition \ref{pro-3}, we have $P^{u}\in \mathcal{P}^{\ast}$. For
$P_{k}$, by using the same analysis as in (\ref{ne-26}), we can deduce%
\[
J(u^{\varepsilon_{k}})-J(u^{\ast})\leq \varepsilon_{k}E_{P_{k}}[L^{u}%
]+o(\varepsilon_{k}).
\]
Since $L^{u}\in L_{G}^{1}(\Omega_{T})$, we have $E_{P_{k}}[L^{u}]\rightarrow
E_{P^{u}}[L^{u}]$. Thus we obtain%
\begin{equation}
\underset{\varepsilon \rightarrow0}{\lim \sup}\frac{J(u^{\varepsilon}%
)-J(u^{\ast})}{\varepsilon}\leq E_{P^{u}}[L^{u}]\leq \sup_{P\in \mathcal{P}%
^{\ast}}E_{P}[L^{u}]. \label{ne-28}%
\end{equation}
By (\ref{ne-27}) and (\ref{ne-28}), we obtain the desired result.
\end{proof}

\subsection{Maximum principle}

We first use Theorem \ref{th-7} to obtain the following variational inequality.

\begin{proposition}
\label{pro-8}Let assumptions (A1)-(A5) hold and let $u^{\ast}\in
\mathcal{U}[0,T]$ be an optimal control for control problem (\ref{ne-4}). Then
there exists a $P^{\ast}\in \mathcal{P}^{\ast}$ satisfying%
\[
\inf_{u\in \mathcal{U}[0,T]}E_{P^{\ast}}[L^{u}]\geq0,
\]
where $\mathcal{P}^{\ast}$ is defined in (\ref{ne-20}) and $L^{u}$ is defined
in Theorem \ref{th-7}.
\end{proposition}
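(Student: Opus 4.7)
The plan is to reduce the proposition to a minimax duality applied to the variational identity of Theorem \ref{th-7}. Since $u^{\ast}$ is optimal, $J(u^{\varepsilon})\geq J(u^{\ast})$ for every $\varepsilon\in[0,1]$, so for each fixed $u\in\mathcal{U}[0,T]$ Theorem \ref{th-7} gives
\[
\sup_{P\in\mathcal{P}^{\ast}}E_{P}[L^{u}]=\lim_{\varepsilon\rightarrow 0}\frac{J(u^{\varepsilon})-J(u^{\ast})}{\varepsilon}\geq 0,
\]
but the attaining $P$ depends a priori on $u$. The content of the proposition is that a single $P^{\ast}$ can be chosen uniformly in $u$, which is precisely a maximin conclusion.

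To apply a minimax theorem to $F(P,u):=E_{P}[L^{u}]$ on $\mathcal{P}^{\ast}\times\mathcal{U}[0,T]$, I would verify three structural facts. First, $\mathcal{P}^{\ast}$ is weakly compact and convex by Proposition \ref{pro-3}(3). Second, for each $u$, the map $P\mapsto E_{P}[L^{u}]$ is linear and weakly continuous; linearity is immediate, and weak continuity follows because $L^{u}\in L_{G}^{1}(\Omega_{T})$, for which the $L_{G}^{1}$-uniform continuity of $P\mapsto E_{P}[\cdot]$ along weakly convergent sequences (Lemma 29 of \cite{DHP11}, already used in the proof of Proposition \ref{pro-3}(2)(4)) applies. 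Third, for each $P$, the map $u\mapsto E_{P}[L^{u}]$ is affine on the convex set $\mathcal{U}[0,T]$: the variational $G$-SDE of Proposition \ref{pro-4} is linear in the perturbation $u-u^{\ast}$, so $\hat{X}^{u}$ depends linearly on $u-u^{\ast}$, and substituting into the explicit formula for $L^{u}$ in Theorem \ref{th-7} shows that every term is linear in $u-u^{\ast}$.

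With these in place, a Ky Fan type minimax theorem (requiring compactness only on the $P$-side, together with upper semicontinuity and concavity of $F(\cdot,u)$ and convexity of $F(P,\cdot)$) yields
\[
\max_{P\in\mathcal{P}^{\ast}}\inf_{u\in\mathcal{U}[0,T]}E_{P}[L^{u}]=\inf_{u\in\mathcal{U}[0,T]}\sup_{P\in\mathcal{P}^{\ast}}E_{P}[L^{u}]\geq 0,
\]
and the maximum on the left is attained because $P\mapsto\inf_{u}E_{P}[L^{u}]$ is the infimum of a family of weakly continuous functions, hence weakly upper semicontinuous on the weakly compact set $\mathcal{P}^{\ast}$. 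Any maximizer $P^{\ast}$ then satisfies $\inf_{u}E_{P^{\ast}}[L^{u}]\geq 0$, which is the claim.

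The main obstacle I foresee is not the abstract minimax step but the justification that $L^{u}\in L_{G}^{1}(\Omega_{T})$ with the weak continuity in $P$ and that its dependence on $u$ is genuinely affine once one unpacks the linear $G$-SDE satisfied by $\hat{X}^{u}$; these use the linear growth in (A4), the $L_{G}^{p}$-estimates for $\hat{X}^{u}$ built into Proposition \ref{pro-4}, and the boundedness of $\Lambda$ noted after (\ref{ne-19}). Once these structural checks are completed the minimax argument is purely abstract and produces the uniform $P^{\ast}$.
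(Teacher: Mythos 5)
Your proposal is correct and follows essentially the same route as the paper: both start from $\inf_{u}\sup_{P\in\mathcal{P}^{\ast}}E_{P}[L^{u}]\geq0$ (a consequence of Theorem \ref{th-7} and optimality), invoke the weak compactness and convexity of $\mathcal{P}^{\ast}$ from Proposition \ref{pro-3} together with a minimax theorem (the paper cites Sion's) to interchange $\inf$ and $\sup$, and then extract an attaining $P^{\ast}$ using weak continuity of $P\mapsto E_{P}[L^{u}]$ for $L^{u}\in L_{G}^{1}(\Omega_{T})$. Your attainment step via upper semicontinuity of $P\mapsto\inf_{u}E_{P}[L^{u}]$ is a minor cosmetic variant of the paper's sequence-extraction argument, and your explicit verification of the affine dependence of $L^{u}$ on $u$ is a hypothesis check the paper leaves implicit.
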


\begin{proof}
By Theorem \ref{th-7}, we have%
\[
\inf_{u\in \mathcal{U}[0,T]}\sup_{P\in \mathcal{P}^{\ast}}E_{P}[L^{u}]\geq0.
\]
It follows from Proposition \ref{pro-3} that $\mathcal{P}^{\ast}$ is weakly
compact and convex. By Sion's minimax theorem, we get%
\[
\sup_{P\in \mathcal{P}^{\ast}}\inf_{u\in \mathcal{U}[0,T]}E_{P}[L^{u}%
]=\inf_{u\in \mathcal{U}[0,T]}\sup_{P\in \mathcal{P}^{\ast}}E_{P}[L^{u}]\geq0.
\]
Thus we can find a sequence $(P_{k})_{k\geq1}\subset \mathcal{P}^{\ast}$ such
that $(P_{k})_{k\geq1}$ converges weakly to $P^{\ast}\in \mathcal{P}^{\ast}$
and%
\[
\inf_{u\in \mathcal{U}[0,T]}E_{P_{k}}[L^{u}]\geq-k^{-1}.
\]
Since $L^{u}\in L_{G}^{1}(\Omega_{T})$, we have $E_{P_{k}}[L^{u}]\rightarrow
E_{P^{\ast}}[L^{u}]$. Thus we obtain the desired result.
\end{proof}

Now we introduce the following adjoint equation under $P^{\ast}$.%
\begin{equation}
\left \{
\begin{array}
[c]{rl}%
dp_{t}= & -[(b_{x}(t))^{T}p_{t}+f_{y}(t)p_{t}+(f_{x}(t))^{T}]dt\\
& -\sum_{i,j=1}^{d}[(h_{x}^{ij}(t))^{T}p_{t}+g_{y}^{ij}(t)p_{t}+(\sigma
_{x}^{i}(t))^{T}q_{t}^{j}+(g_{x}^{ij}(t))^{T}]d\langle B^{i},B^{j}\rangle
_{t}\\
& +\sum_{i=1}^{d}q_{t}^{i}dB_{t}^{i}+dN_{t},\\
p_{T}= & (\Phi_{x}(X_{T}^{\ast}))^{T}.
\end{array}
\right.  \label{ne-29}%
\end{equation}
It follows from El Karoui and Huang \cite{EH} that the adjoint equation
(\ref{ne-29}) has a unique solution $(p,q,N)\in M_{P^{\ast}}^{2}%
(0,T;\mathbb{R}^{n})\times M_{P^{\ast}}^{2}(0,T;\mathbb{R}^{n\times d})\times
M_{P^{\ast}}^{2,\bot}(0,T;\mathbb{R}^{n})$, where $q=[q^{1},\ldots,q^{d}]$ and%
\[
M_{P^{\ast}}^{2}(0,T;\mathbb{R}^{n})=\left \{  \eta:\eta \text{ is }%
\mathbb{R}^{n}\text{-valued progressively measurable and }E_{P^{\ast}}\left[
\int_{0}^{T}|\eta_{t}|^{2}dt\right]  <\infty \right \}  ,
\]%
\[
M_{P^{\ast}}^{2,\bot}(0,T;\mathbb{R}^{n})=\left \{  N:N\text{ is }%
\mathbb{R}^{n}\text{-valued square integrable martingale and orthogonal to
}B\right \}  .
\]
Define the Hamiltonian $H:[0,T]\times \Omega_{T}\times \mathbb{R}^{n}%
\times \mathbb{R}\times U\times \mathbb{R}^{n}\times \mathbb{R}^{n\times
d}\rightarrow \mathbb{R}$ as follows:%
\[
H(t,x,y,v,p,q):=p^{T}b(t,x,v)+f(t,x,y,v)+\sum_{i,j=1}^{d}[p^{T}h^{ij}%
(t,x,v)+(q^{j})^{T}\sigma^{i}(t,x,v)+g^{ij}(t,x,y,v)]\gamma_{t}^{ij},
\]
where $d\langle B^{i},B^{j}\rangle_{t}=\gamma_{t}^{ij}dt$. The following
theorem is the maximum principle.

\begin{theorem}
\label{th-9}Let assumptions (A1)-(A5) hold and let $u^{\ast}\in \mathcal{U}%
[0,T]$ be an optimal control for control problem (\ref{ne-4}). Then there
exists a $P^{\ast}\in \mathcal{P}^{\ast}$ satisfying%
\[
H_{v}(t,X_{t}^{\ast},Y_{t}^{\ast},u_{t}^{\ast},p_{t},q_{t})(v-u_{t}^{\ast
})\geq0\text{, }\forall v\in U\text{, a.e., }P^{\ast}\text{-a.s.,}%
\]
where $(X^{\ast},Y^{\ast})$ is the solution of (\ref{ne-2})-(\ref{ne-3})
corresponding to $u^{\ast}$, $\mathcal{P}^{\ast}$ is defined in (\ref{ne-20}),
$(p,q,N)$ is the solution of the adjoint equation (\ref{ne-29}) under
$P^{\ast}$.
\end{theorem}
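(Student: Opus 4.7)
The plan is to convert the variational inequality of Proposition \ref{pro-8} into the stated pointwise first-order condition by applying It\^o's formula under $P^*$ to the product $\Lambda_t p_t^T \hat{X}_t$. Proposition \ref{pro-8} first furnishes a $P^* \in \mathcal{P}^*$ with $E_{P^*}[L^u] \geq 0$ for every $u \in \mathcal{U}[0,T]$. Under $P^*$ the adjoint equation (\ref{ne-29}) is a classical linear BSDE with bounded coefficients (using (A3) and the boundedness of $\gamma_t^{ij}$ under $P^*$) and square-integrable terminal datum $(\Phi_x(X_T^*))^T$; the theory of El Karoui-Huang \cite{EH} then produces the unique solution $(p,q,N)$ with $N$ a square-integrable martingale orthogonal to $B$.

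Next, I apply It\^o's formula under $P^*$ to $\Lambda_t p_t^T \hat{X}_t$, where $\hat{X}$ solves the linear SDE of Proposition \ref{pro-4} (read on $P^*$ with $d\langle B^i, B^j\rangle_t = \gamma_t^{ij} dt$), $\Lambda$ is the finite-variation exponential in (\ref{ne-19}), and $p$ satisfies (\ref{ne-29}). The drift and $\gamma^{ij}$ pieces combine so that all contributions proportional to $\hat{X}_t$ cancel; this is the defining property of the adjoint equation, and uses the symmetry of $\gamma^{ij}$ to turn the two cross-variation terms $(q^j)^T \sigma_x^i \hat{X}$ and $(q^i)^T \sigma_x^j \hat{X}$ into matching contributions. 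The stochastic integrals against $dB^i$ and against $dN$ are genuine $P^*$-martingales (by boundedness of $\Lambda$, the $L^2(P^*)$ integrability of $\hat{X}$ from Proposition \ref{pro-4} and of $(p,q)$ from El Karoui-Huang, and the orthogonality $\langle N, B\rangle = 0$), so they drop out in expectation. Using $\hat{X}_0 = 0$ and $p_T = (\Phi_x(X_T^*))^T$, and invoking symmetry of $\gamma^{ij}$ once more to match $(q^i)^T \sigma_v^j \gamma^{ij}$ with $(q^j)^T \sigma_v^i \gamma^{ij}$, one obtains
\[
E_{P^*}[L^u] = E_{P^*}\left[\int_0^T \Lambda_t \, H_v(t, X_t^*, Y_t^*, u_t^*, p_t, q_t)(u_t - u_t^*) \, dt\right].
\]

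Combined with Proposition \ref{pro-8}, this integral is $\geq 0$ for every $u \in \mathcal{U}[0,T]$. To extract the pointwise inequality, I use convex spike variations: for any $v \in U$, any $0 \leq t_1 < t_2 \leq T$, and any indicator $I_A$ with $A \in \mathcal{F}_{t_1}$ approximable in $L_G^2(\Omega_{t_1})$ by smooth cylinder functions, the control $u_s = u_s^* + I_{[t_1,t_2]}(s) I_A (v - u_s^*)$ is a pointwise convex combination of $u_s^*$ and $v$, hence lies in $\mathcal{U}[0,T]$ after smoothing $I_A$. Testing against this family, varying $A$ and $(t_1,t_2)$, using $\Lambda_t > 0$, and applying Lebesgue differentiation yields $H_v(t, X_t^*, Y_t^*, u_t^*, p_t, q_t)(v - u_t^*) \geq 0$ $dt \otimes dP^*$-a.e.; ranging $v$ over a countable dense subset of $U$ and invoking continuity of $H_v$ in $v$ closes the argument. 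The principal obstacle is the It\^o bookkeeping in the second step, particularly keeping track of the symmetrized $d\langle B^i, B^j\rangle$ indices so that $H$ emerges in the exact form stated; a secondary, more routine issue is verifying that the spike-variation controls lie in $M_G^2(0,T;U)$, which follows from a standard approximation of $\mathcal{F}_{t_1}$-measurable events by $L_G^2$-cylinder functions.
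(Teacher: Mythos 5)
Your first two steps coincide with the paper's proof: Proposition \ref{pro-8} supplies $P^{\ast}\in\mathcal{P}^{\ast}$ with $E_{P^{\ast}}[L^{u}]\geq0$ for all $u\in\mathcal{U}[0,T]$, and an application of It\^{o}'s formula under $P^{\ast}$ to $\Lambda_{t}\langle\hat{X}_{t},p_{t}\rangle$ yields
\[
E_{P^{\ast}}[L^{u}]=E_{P^{\ast}}\left[\int_{0}^{T}H_{v}(t,X_{t}^{\ast},Y_{t}^{\ast},u_{t}^{\ast},p_{t},q_{t})(u_{t}-u_{t}^{\ast})\Lambda_{t}\,dt\right].
\]
This part is fine.

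The gap is in the final localization step, which you describe as ``secondary'' and ``routine''; it is in fact the only nontrivial point remaining, and your stated justification for it is wrong. You claim that an $\mathcal{F}_{t_{1}}$-measurable indicator $I_{A}$ is ``approximable in $L_{G}^{2}(\Omega_{t_{1}})$ by smooth cylinder functions.'' It is not: $L_{G}^{2}(\Omega_{t_{1}})$ consists of quasi-continuous random variables, and the indicator of a general Borel set does not belong to this space, nor can it be approximated in the $\|\cdot\|_{L_{G}^{2}}$ (i.e.\ $\mathbb{\hat{E}}[|\cdot|^{2}]^{1/2}$) norm. Worse, the sets $A$ you actually need in order to extract the pointwise inequality are determined by the sign of $H_{v}(t,\cdot)(v-u_{t}^{\ast})$, hence by $(p_{t},q_{t})$, which are defined only as solutions of a BSDE under $P^{\ast}$ and have no quasi-continuous version; such sets live only in the $P^{\ast}$-framework. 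The correct statement, and the one the paper proves, is that $\mathcal{U}[0,T]=M_{G}^{2}(0,T;U)$ is dense in $M_{P^{\ast}}^{2}(0,T;U)$ \emph{in the $L^{2}(P^{\ast})$ norm}. Establishing this requires the quasi-continuity approximation of Proposition 4.3 in \cite{HWZ} together with Lusin's theorem under $P^{\ast}$, plus a device to force the approximating processes to take values in $U$: the paper projects onto the shrunken convex sets $U_{\delta_{k}}$ built from the relative interior of $U$, exploiting $\sup_{v_{1}\in U}\inf_{v_{2}\in U_{\delta}}|v_{1}-v_{2}|\leq C(1-\delta)$ and the $1$-Lipschitz property of the projection. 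Your phrase ``after smoothing $I_{A}$'' conceals exactly this work (one must also check the smoothed control still takes values in $U$, which in your set-up would follow from convexity only if the approximant of $I_{A}$ is kept $[0,1]$-valued). Once the density is in hand, either the paper's direct argument or your spike-variation plus Lebesgue differentiation argument closes the proof; but as written, the proposal does not contain the idea that actually makes the last step work.
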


\begin{proof}
By Proposition \ref{pro-8}, there exists a $P^{\ast}\in \mathcal{P}^{\ast}$
such that $E_{P^{\ast}}[L^{u}]\geq0$ for each $u\in \mathcal{U}[0,T]$. Applying
It\^{o}'s formula to $\Lambda_{t}\langle \hat{X}_{t},p_{t}\rangle$ under
$P^{\ast}$, we can deduce%
\[
E_{P^{\ast}}[L^{u}]=E_{P^{\ast}}\left[  \int_{0}^{T}H_{v}(t,X_{t}^{\ast}%
,Y_{t}^{\ast},u_{t}^{\ast},p_{t},q_{t})(u_{t}-u_{t}^{\ast})\Lambda
_{t}dt\right]  .
\]
Thus we only need to prove that $\mathcal{U}[0,T]=M_{G}^{2}(0,T;U)$ is dense
in $M_{P^{\ast}}^{2}(0,T;U)$. Since%
\[
\lim_{k\rightarrow \infty}E_{P^{\ast}}\left[  \int_{0}^{T}(|\eta_{t}|-|\eta
_{t}|\wedge k)^{2}dt\right]  =0\text{ for each }\eta \in M_{P^{\ast}}%
^{2}(0,T;U),
\]
we only need to prove the case that $U$ is bounded. Let $v_{0}\in$ri$(U)$ be
fixed, where ri$(U)$ is the relative interior of $U$. For each $\delta
\in \lbrack0,1)$, set%
\[
U_{\delta}=\{(1-\alpha)v_{0}+\alpha v:\forall \alpha \in \lbrack0,\delta],\text{
}\forall v\in \text{cl}(U)\backslash \text{ri}(U)\},
\]
where cl$(U)$ is the closure of $U$. By Theorem 6.1 in \cite{Ro}, we have
$U_{\delta}\subset U$. It is easy to check that $U_{\delta}$ is a closed
convex set and $\sup_{v_{1}\in U}\inf_{v_{2}\in U_{\delta}}|v_{1}-v_{2}|\leq
C(1-\delta)$, where the constant $C>0$ depends only on $U$. For each given
$\eta \in M_{P^{\ast}}^{2}(0,T;U)$, by Proposition 4.3 in \cite{HWZ} and
Lusin's theorem under $P^{\ast}$, there exists a sequence $(\eta^{k})_{k\geq
1}\subset M_{G}^{2}(0,T;\mathbb{R}^{m})$ such that $E_{P^{\ast}}\left[
\int_{0}^{T}|\eta_{t}-\eta_{t}^{k}|^{2}dt\right]  \rightarrow0$ as
$k\rightarrow \infty$. Set $\tilde{\eta}^{k}=P_{U_{\delta_{k}}}(\eta^{k})$,
where $\delta_{k}=1-k^{-1}$ and $P_{U_{\delta_{k}}}$ is the projection onto
$U_{\delta_{k}}$. Since $|P_{U_{\delta_{k}}}(v_{1})-P_{U_{\delta_{k}}}%
(v_{2})|\leq|v_{1}-v_{2}|$, we obtain that $(\tilde{\eta}^{k})_{k\geq1}%
\subset \mathcal{U}[0,T]$ and%
\begin{align*}
E_{P^{\ast}}\left[  \int_{0}^{T}|\eta_{t}-\tilde{\eta}_{t}^{k}|^{2}dt\right]
&  \leq2E_{P^{\ast}}\left[  \int_{0}^{T}(|\eta_{t}-P_{U_{\delta_{k}}}(\eta
_{t})|^{2}+|P_{U_{\delta_{k}}}(\eta_{t})-\tilde{\eta}_{t}^{k}|^{2})dt\right]
\\
&  \leq Ck^{-2}+2E_{P^{\ast}}\left[  \int_{0}^{T}|\eta_{t}-\eta_{t}^{k}%
|^{2}dt\right]  ,
\end{align*}
which implies $E_{P^{\ast}}\left[  \int_{0}^{T}|\eta_{t}-\tilde{\eta}_{t}%
^{k}|^{2}dt\right]  \rightarrow0$ as $k\rightarrow \infty$. Thus we obtain the
desired result.
\end{proof}

\subsection{Sufficient condition}

We give the following sufficient condition for optimality.

\begin{theorem}
\label{th-10}Let assumptions (A1)-(A5) hold. Assume that $u^{\ast}%
\in \mathcal{U}[0,T]$ and $P^{\ast}\in \mathcal{P}^{\ast}$ satisfy%
\[
H_{v}(t,X_{t}^{\ast},Y_{t}^{\ast},u_{t}^{\ast},p_{t},q_{t})(v-u_{t}^{\ast
})\geq0\text{, }\forall v\in U\text{, a.e., }P^{\ast}\text{-a.s.,}%
\]
where $\mathcal{P}^{\ast}$ is defined in (\ref{ne-20}), $(X^{\ast},Y^{\ast})$
is the solution of (\ref{ne-2})-(\ref{ne-3}) corresponding to $u^{\ast}$,
$(p,q,N)$ is the solution of (\ref{ne-29}) under $P^{\ast}$. Moreover, we
assume that $H(\cdot)$ is convex with respect to $x$, $y$, $v$ and $\Phi
(\cdot)$ is convex with respect to $x$. Then $u^{\ast}$ is an optimal control
for control problem (\ref{ne-4}).
\end{theorem}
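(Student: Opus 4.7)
Take an arbitrary $u\in\mathcal{U}[0,T]$ with solution $(X^u,Y^u)$ and set $\hat X_t:=X_t^u-X_t^*$, $\hat Y_t:=Y_t^u-Y_t^*$; the goal is to show $\hat Y_0\ge 0$. Since $P^*\in\mathcal{P}^*$, Proposition~\ref{new-pro-4} gives $Y_t^*=E_{P^*}[\xi^*_t\,|\,\mathcal{F}_t]-\alpha_t^T(P^*)$ while Theorem~\ref{new-th-re-1} yields $Y_t^u\ge E_{P^*}[\xi^u_t\,|\,\mathcal{F}_t]-\alpha_t^T(P^*)$, $P^*$-a.s., where $\xi^u_t:=\Phi(X_T^u)+\int_t^T f(s,X^u,Y^u,u)\,ds+\sum_{i,j}\int_t^T g^{ij}(s,X^u,Y^u,u)\,d\langle B^i,B^j\rangle_s$. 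Subtracting,
\[
\hat Y_t\ge E_{P^*}[\xi^u_t-\xi^*_t\,|\,\mathcal{F}_t],\qquad t\in[0,T],\ P^*\text{-a.s.}
\]

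Use the definition of $H$ to rewrite $\xi^u_t-\xi^*_t$ as $[\Phi(X_T^u)-\Phi(X_T^*)]+\int_t^T(H^u-H^*)\,ds-\int_t^T p^T(b^u-b^*)\,ds-\sum_{i,j}\int_t^T[p^T(h^{ij,u}-h^{ij,*})+(q^j)^T(\sigma^{i,u}-\sigma^{i,*})]\,d\langle B^i,B^j\rangle_s$. Applying It\^o's formula under $P^*$ to $p_s^T\hat X_s$ on $[t,T]$ (using the forward SDE for $\hat X$ and the adjoint~(\ref{ne-29}) for $p$) and taking conditional $P^*$-expectation identifies the $b$-, $h$-, $\sigma$-integrals with $E_{P^*}[\Phi_x(X_T^*)\hat X_T|\mathcal{F}_t]-p_t^T\hat X_t+E_{P^*}[\int_t^T H_x^*\hat X\,ds+\int_t^T f_y^*p^T\hat X\,ds+\sum_{i,j}\int_t^T g_y^{ij,*}p^T\hat X\,d\langle B^i,B^j\rangle_s|\mathcal{F}_t]$. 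Combining this with convexity of $\Phi$ ($\Phi(X_T^u)-\Phi(X_T^*)\ge p_T^T\hat X_T$) and joint convexity of $H$ in $(x,y,v)$ ($H^u-H^*\ge H_x^*\hat X+H_y^*\hat Y+H_v^*(u-u^*)$), the $H_x^*\hat X$ pieces cancel and, after regrouping $H_y^*\hat Y-f_y^*p^T\hat X-\sum_{i,j}\gamma^{ij}g_y^{ij,*}p^T\hat X=H_y^*\Delta$ with $\Delta_t:=\hat Y_t-p_t^T\hat X_t$, one obtains
\[
\Delta_t\ge E_{P^*}\!\Bigl[R_T+\int_t^T H_v^*(u_s-u_s^*)\,ds+\int_t^T H_y^*\Delta_s\,ds\,\Big|\,\mathcal{F}_t\Bigr],
\]
with $R_T:=\Phi(X_T^u)-\Phi(X_T^*)-\Phi_x(X_T^*)\hat X_T\ge 0$ (convexity of $\Phi$) and $\int_t^T H_y^*\Delta_s\,ds$ an abbreviation for $\int_t^T f_y^*\Delta_s\,ds+\sum_{i,j}\int_t^T g_y^{ij,*}\Delta_s\,d\langle B^i,B^j\rangle_s$; the maximum principle hypothesis forces $H_v^*(u-u^*)\ge 0$ a.e., $P^*$-a.s.

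Let $\Delta^0$ denote the unique solution of the associated linear BSDE under $P^*$ (equality in the above, terminal value $R_T$). Boundedness of $f_y^*,g_y^{ij,*}$ from (A3) legitimises the Feynman--Kac representation with discount $\Lambda_t$ from~(\ref{ne-19}):
\[
\Delta_t^0=E_{P^*}\!\Bigl[\tfrac{\Lambda_T}{\Lambda_t}R_T+\int_t^T\tfrac{\Lambda_s}{\Lambda_t}H_v^*(u_s-u_s^*)\,ds\,\Big|\,\mathcal{F}_t\Bigr]\ge 0,
\]
and $W:=\Delta-\Delta^0$ satisfies $W_t\ge E_{P^*}[\int_t^T H_y^*W_s\,ds\,|\,\mathcal{F}_t]$ with $W_T=0$. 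The comparison principle for linear BSDEs with bounded coefficients then gives $W_t\ge 0$, whence $\Delta_t\ge\Delta_t^0\ge 0$. At $t=0$, $\hat X_0=0$ yields $\hat Y_0=\Delta_0\ge 0$, i.e.\ $J(u)\ge J(u^*)$; since $u$ was arbitrary, $u^*$ is optimal.

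The delicate point is the comparison step, because a priori $\Delta$ is not known to be a $P^*$-semimartingale (the nonlinear-expectation process $Y^u$ may lack a Doob--Meyer decomposition under $P^*$). I would handle this either by a direct Gronwall estimate on $\sup_{s\in[t,T]}E_{P^*}[W_s^-]$ using $|H_y^*|\le C$ from (A3), or by approximating $\Delta$ with $P^*$-semimartingale super-solutions and passing to the limit, paralleling the telescoping/discretization technique used for~(\ref{ne-25}) in Theorem~\ref{th-7}.
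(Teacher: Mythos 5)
Your argument is essentially correct and uses the same three ingredients as the paper --- the representation inequality $\hat Y_t\ge E_{P^*}[\xi^u_t-\xi^*_t\,|\,\mathcal{F}_t]$ coming from Proposition \ref{new-pro-4} and Theorem \ref{new-th-re-1}, It\^o's formula pairing $p$ with $\hat X$, and convexity of $H$ and $\Phi$ --- but in a different order. The paper first runs the entire Theorem \ref{th-7} machinery on the exact (non-Taylor) remainders $\beta_4,\beta_5,\beta_6^{ij}$ to land directly on the $\Lambda$-weighted inequality (\ref{ne-31}) at $t=0$, then applies It\^o to $\Lambda_t\langle\tilde X_t,p_t\rangle$ (giving (\ref{ne-30})) and only at the end invokes convexity to see that the combined integrand is the nonnegative Hamiltonian remainder. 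You instead apply It\^o and convexity first, arriving at a backward integral inequality for $\Delta_t=\hat Y_t-p_t^T\hat X_t$, and defer the linear-BSDE comparison to the end. The trade-off is that the paper's ordering disposes of the comparison difficulty once, inside the proof of Theorem \ref{th-7} (inequality (\ref{ne-25})), whereas your ordering isolates the convexity argument cleanly but must then re-solve the comparison problem for the non-semimartingale process $\Delta$.

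On that last step, which you rightly flag as the delicate point: your first remedy does not close. From $W_t\ge E_{P^*}[\int_t^T H_y^*W_s\,ds\,|\,\mathcal{F}_t]$ you only get $E_{P^*}[W_t^-]\le C\int_t^T E_{P^*}[|W_s|]\,ds$, and since $|W|=W^++W^-$ with no a priori smallness of $W^+$, Gronwall applied to $E_{P^*}[W^-]$ alone does not yield $W^-\equiv0$ ($f_y$, $g_y^{ij}$ may be negative, so you cannot simply drop the drift either). Your second remedy is the right one and is exactly what the paper's phrase ``the same analysis as in the proof of Theorem \ref{th-7}'' hides: write $\Delta_t=d_t+E_{P^*}[R_T+\int_t^T H_v^*(u_s-u_s^*)\,ds+\int_t^T H_y^*\Delta_s\,ds\,|\,\mathcal{F}_t]$ with defect $d_t\ge0$, apply the explicit solution formula (Lemma 4.6 of \cite{HSW}) to get $\Delta_0=d_0+E_{P^*}[R_T\Lambda_T+\int_0^T H_v^*(u_s-u_s^*)\Lambda_s\,ds+\int_0^T f_y(s)d_s\Lambda_s\,ds+\sum_{i,j}\int_0^T g_y^{ij}(s)d_s\Lambda_s\,d\langle B^i,B^j\rangle_s]$, and control the $d$-terms by the telescoping argument of (\ref{ne-25}). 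For that argument you must check that $d_{t_k^N}\ge E_{P^*}[d_{t_{k+1}^N}|\mathcal{F}_{t_k^N}]$; this holds because $d_t$ decomposes as $\delta_t$ (the convex-expectation defect, handled by Remark \ref{new-re-4} and the tower property of $\mathbb{\tilde E}_t$) plus $E_{P^*}[\int_t^T(\text{pointwise nonnegative convexity remainder})\,ds\,|\,\mathcal{F}_t]$, whose partition increments are manifestly nonnegative. With that verification supplied, your proof is complete.
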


\begin{proof}
For each given $u\in \mathcal{U}[0,T]$, the solution for (\ref{ne-2}%
)-(\ref{ne-3}) corresponding to $u$ is denoted by $(X,Y)$. Set $\tilde{X}%
_{t}=X_{t}-X_{t}^{\ast}$, $\tilde{Y}_{t}=Y_{t}-Y_{t}^{\ast}$ for $t\in
\lbrack0,T]$. We have%
\[
\left \{
\begin{array}
[c]{rl}%
d\tilde{X}_{t}= & [b_{x}(t)\tilde{X}_{t}+\beta_{1}(t)]dt+\sum_{i,j=1}%
^{d}[h_{x}^{ij}(t)\tilde{X}_{t}+\beta_{2}^{ij}(t)]d\langle B^{i},B^{j}%
\rangle_{t}+\sum_{i=1}^{d}[\sigma_{x}^{i}(t)\tilde{X}_{t}+\beta_{3}%
^{i}(t)]dB_{t}^{i},\\
\tilde{X}_{0}= & 0,
\end{array}
\right.
\]
where%
\[%
\begin{array}
[c]{rl}%
\beta_{1}(t)= & b(t,X_{t},u_{t})-b(t,X_{t}^{\ast},u_{t}^{\ast})-b_{x}%
(t)\tilde{X}_{t},\\
\beta_{2}^{ij}(t)= & h^{ij}(t,X_{t},u_{t})-h^{ij}(t,X_{t}^{\ast},u_{t}^{\ast
})-h_{x}^{ij}(t)\tilde{X}_{t},\\
\beta_{3}^{i}(t)= & \sigma^{i}(t,X_{t},u_{t})-\sigma^{i}(t,X_{t}^{\ast}%
,u_{t}^{\ast})-\sigma_{x}^{i}(t)\tilde{X}_{t}.
\end{array}
\]
Applying It\^{o}'s formula to $\Lambda_{t}\langle \tilde{X}_{t},p_{t}\rangle$
under $P^{\ast}$, where $(\Lambda_{t})_{t\leq T}$ is defined in (\ref{ne-19}),
we can get%
\begin{equation}%
\begin{array}
[c]{cl}%
E_{P^{\ast}}[\Phi_{x}(X_{T}^{\ast})\tilde{X}_{T}\Lambda_{T}]= & E_{P^{\ast}%
}\left[  \int_{0}^{T}(p_{t}^{T}\beta_{1}(t)-f_{x}(t)\tilde{X}_{t})\Lambda
_{t}dt\right] \\
& +\sum_{i,j=1}^{d}E_{P^{\ast}}\left[  \int_{0}^{T}(p_{t}^{T}\beta_{2}%
^{ij}(t)+(q_{t}^{j})^{T}\beta_{3}^{i}(t)-g_{x}^{ij}(t)\tilde{X}_{t}%
)\Lambda_{t}d\langle B^{i},B^{j}\rangle_{t}\right]  .
\end{array}
\label{ne-30}%
\end{equation}
Denote $\beta_{4}(T)=\Phi(X_{T})-\Phi(X_{T}^{\ast})-\Phi_{x}(X_{T}^{\ast
})\tilde{X}_{T}$ and%
\[%
\begin{array}
[c]{rl}%
\beta_{5}(t)= & f(t,X_{t},Y_{t},u_{t})-f(t,X_{t}^{\ast},Y_{t}^{\ast}%
,u_{t}^{\ast})-f_{x}(t)\tilde{X}_{t}-f_{y}(t)\tilde{Y}_{t},\\
\beta_{6}^{ij}(t)= & g^{ij}(t,X_{t},Y_{t},u_{t})-g^{ij}(t,X_{t}^{\ast}%
,Y_{t}^{\ast},u_{t}^{\ast})-g_{x}^{ij}(t)\tilde{X}_{t}-g_{y}^{ij}(t)\tilde
{Y}_{t}.
\end{array}
\]
The same analysis as in the proof of Theorem \ref{th-7}, we can obtain%
\begin{equation}%
\begin{array}
[c]{cl}%
\tilde{Y}_{0}\geq & E_{P^{\ast}}\left[  \Phi_{x}(X_{T}^{\ast})\tilde{X}%
_{T}\Lambda_{T}+\beta_{4}(T)\Lambda_{T}+\int_{0}^{T}(f_{x}(t)\tilde{X}%
_{t}+\beta_{5}(t))\Lambda_{t}dt\right] \\
& +\sum_{i,j=1}^{d}E_{P^{\ast}}\left[  \int_{0}^{T}(g_{x}^{ij}(t)\tilde{X}%
_{t}+\beta_{6}^{ij}(t))\Lambda_{t}d\langle B^{i},B^{j}\rangle_{t}\right]  .
\end{array}
\label{ne-31}%
\end{equation}
Note that $H_{v}(t,X_{t}^{\ast},Y_{t}^{\ast},u_{t}^{\ast},p_{t},q_{t}%
)(u_{t}-u_{t}^{\ast})\geq0$, a.e., $P^{\ast}$-a.s., $H(\cdot)$ is convex with
respect to $x$, $y$, $v$ and $\Phi(\cdot)$ is convex with respect to $x$. Then
we have $\beta_{4}(T)\geq0$ and
\begin{align*}
&  p_{t}^{T}\beta_{1}(t)+\beta_{5}(t)+\sum_{i,j=1}^{d}\left[  p_{t}^{T}%
\beta_{2}^{ij}(t)+(q_{t}^{j})^{T}\beta_{3}^{i}(t)+\beta_{6}^{ij}(t)\right]
\gamma_{t}^{ij}\\
&  =H(t,X_{t},Y_{t},u_{t},p_{t},q_{t})-H(t)-H_{x}(t)\tilde{X}_{t}%
-H_{y}(t)\tilde{Y}_{t}\\
&  \geq H(t,X_{t},Y_{t},u_{t},p_{t},q_{t})-H(t)-H_{x}(t)\tilde{X}_{t}%
-H_{y}(t)\tilde{Y}_{t}-H_{v}(t)(u_{t}-u_{t}^{\ast})\\
&  \geq0,
\end{align*}
where $H(t)=H(t,X_{t}^{\ast},Y_{t}^{\ast},u_{t}^{\ast},p_{t},q_{t})$, similar
definition for $H_{x}(t)$, $H_{y}(t)$ and $H_{v}(t)$. Thus we obtain
$\tilde{Y}_{0}\geq0$ by (\ref{ne-30}), (\ref{ne-31}) and $\Lambda \geq0$, which
implies that $u^{\ast}$ is an optimal control.
\end{proof}

\section{A linear quadratic control problem}

In the following, we suppose that $d=1$ and $\tilde{G}(\cdot)$ is continuously
differentiable. Now we give a characterization of $P^{\ast}\in \mathcal{P}%
^{\ast}$ which is defined in (\ref{ne-20}).

\begin{proposition}
\label{pr-11}Let $\xi=\xi^{\prime}+\int_{0}^{T}\eta_{t}d\langle B\rangle
_{t}-\int_{0}^{T}\tilde{G}(2\eta_{t})dt$, where $\xi^{\prime}\in L_{G}%
^{1}(\Omega_{T})$ such that $\mathbb{\hat{E}}[\xi^{\prime}]=-\mathbb{\hat{E}%
}[-\xi^{\prime}]$, $\eta \in M_{G}^{1}(0,T)$. If $P\in \mathcal{P}$ satisfies
$E_{P}[\xi]-\alpha_{0}^{T}(P)=\mathbb{\tilde{E}}[\xi]$, then $\gamma
_{t}=2\tilde{G}^{\prime}(2\eta_{t})$, a.e., $P$-a.s.
\end{proposition}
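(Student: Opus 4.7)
The plan is to exploit the convex duality between $\mathbb{\tilde{E}}$ and its penalty $\alpha_0^T$: the hypothesis says that $P$ attains the sup in the representation of $\mathbb{\tilde{E}}[\xi]$, which by Legendre--Fenchel reciprocity forces $A := \int_0^T \eta_t d\langle B\rangle_t - \int_0^T \tilde{G}(2\eta_t) dt$ to be a maximizer in the sup defining $\alpha_0^T(P)$. A one-parameter perturbation $\eta \mapsto \eta + \varepsilon \beta$ then converts this optimality into a first-order condition that pins $\gamma_t$ down to $2\tilde{G}'(2\eta_t)$.

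First I would note that $\mathbb{\hat{E}}[\xi'] = -\mathbb{\hat{E}}[-\xi']$ makes $E_P[\xi']$ a constant $c$ independent of $P \in \mathcal{P}$, and applying the two-sided domination (\ref{ne-1}) to the pairs $(\xi'+X,X)$ and $(X,\xi'+X)$ then gives
\[
\mathbb{\tilde{E}}[\xi' + X] = c + \mathbb{\tilde{E}}[X], \qquad X \in L_G^1(\Omega_T).
\]
The principal technical ingredient, which I expect to be the main obstacle, is the identity
\[
\mathbb{\tilde{E}}[A_\zeta] = 0 \quad \text{for all } \zeta \in M_G^1(0,T),\quad A_\zeta := \int_0^T \zeta_t d\langle B\rangle_t - \int_0^T \tilde{G}(2\zeta_t) dt.
\]
For $\mathcal{F}_s$-measurable $\zeta$ this reduces to $\mathbb{\tilde{E}}_s[\zeta(\langle B\rangle_T - \langle B\rangle_s)] = (T-s)\tilde{G}(2\zeta)$, which follows from the defining PDE $\partial_t u - \tilde{G}(D_x^2 u) = 0$ with initial datum $\zeta x^2$ (whose solution is $\zeta x^2 + t\tilde{G}(2\zeta)$), combined with the symmetric-martingale identity $B_T^2 - \langle B\rangle_T = 2\int_0^T B_s dB_s$; tower iteration over simple step $\zeta$ and $M_G^1$-density then complete the proof.

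Combining these facts yields $\mathbb{\tilde{E}}[\xi] = c + \mathbb{\tilde{E}}[A] = c$, and from the definition of $\alpha_0^T(P)$ evaluated at $Y = A$ we obtain $\alpha_0^T(P) \geq E_P[A] - \mathbb{\tilde{E}}[A] = E_P[A]$, whence
\[
E_P[\xi] - \alpha_0^T(P) = c + E_P[A] - \alpha_0^T(P) \leq c = \mathbb{\tilde{E}}[\xi],
\]
with equality if and only if $\alpha_0^T(P) = E_P[A]$. The hypothesis thus sharpens to $\alpha_0^T(P) = E_P[A]$, i.e., $A$ realizes the supremum in the penalty representation of $\alpha_0^T(P)$.

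For the conclusion I fix an arbitrary bounded $\beta \in M_G^\infty(0,T)$ and set $A^\varepsilon := A_{\eta+\varepsilon\beta}$. The key identity gives $\mathbb{\tilde{E}}[A^\varepsilon] = 0$, so $\alpha_0^T(P) \geq E_P[A^\varepsilon]$ for every $\varepsilon \in \mathbb{R}$; combined with equality at $\varepsilon = 0$, the real function $\varepsilon \mapsto E_P[A^\varepsilon]$ is maximized at $\varepsilon = 0$. Since $\tilde{G}$ is Lipschitz (by domination through the sublinear $G$) so that $\tilde{G}'$ is bounded, and $\gamma_t$ is bounded under any $P \in \mathcal{P}$, dominated convergence justifies differentiation under $E_P$ and yields
\[
0 = \frac{d}{d\varepsilon}\bigg|_{\varepsilon=0} E_P[A^\varepsilon] = E_P\left[\int_0^T \beta_t \bigl(\gamma_t - 2\tilde{G}'(2\eta_t)\bigr) dt\right].
\]
Since $\beta$ is arbitrary in $M_G^\infty(0,T)$, a density/Lusin argument under $P$ (as in the final paragraph of the proof of Theorem \ref{th-9}, approximating the bounded process $\operatorname{sign}(\gamma_t - 2\tilde{G}'(2\eta_t))$ by $M_G^\infty$-processes) forces $\gamma_t = 2\tilde{G}'(2\eta_t)$ for a.e.\ $t$, $P$-a.s.
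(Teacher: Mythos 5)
Your proof is correct and follows essentially the same route as the paper: both reduce the hypothesis, via $\mathbb{\tilde{E}}[\xi]=\mathbb{\hat{E}}[\xi']$ and $\mathbb{\tilde{E}}[A_{\tilde\eta}]=0$ (which the paper simply cites as Proposition 1.3.8 of \cite{P2019} and Lemma 5.7 of \cite{HJL}, while you sketch a proof), to the statement that $\alpha_0^T(P)=E_P[A]\geq E_P[A_{\tilde\eta}]$ for all $\tilde\eta$, and then extract a first-order condition. The only cosmetic difference is the last step: the paper localizes directly to the pointwise inequality $\eta_t\gamma_t-\tilde{G}(2\eta_t)\geq a\gamma_t-\tilde{G}(2a)$ for all $a$, whereas you take a Gateaux derivative in a direction $\beta$ and then invoke a density argument; both are legitimate and of comparable rigor.
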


\begin{proof}
By Proposition 1.3.8 in \cite{P2019} and Lemma 5.7 in \cite{HJL}, we have%
\[
\mathbb{\tilde{E}}[\xi]=\mathbb{\hat{E}}[\xi^{\prime}]\text{ and }E_{P}%
[\xi]=\mathbb{\hat{E}}[\xi^{\prime}]+E_{P}\left[  \int_{0}^{T}\eta_{t}d\langle
B\rangle_{t}-\int_{0}^{T}\tilde{G}(2\eta_{t})dt\right]  .
\]
Thus, for any $\tilde{\eta}\in M_{G}^{1}(0,T)$, we have
\[
\alpha_{0}^{T}(P)=E_{P}\left[  \int_{0}^{T}\eta_{t}d\langle B\rangle_{t}%
-\int_{0}^{T}\tilde{G}(2\eta_{t})dt\right]  \geq E_{P}\left[  \int_{0}%
^{T}\tilde{\eta}_{t}d\langle B\rangle_{t}-\int_{0}^{T}\tilde{G}(2\tilde{\eta
}_{t})dt\right]  .
\]
Since $\tilde{\eta}$ is arbitrary, we get%
\[
\eta_{t}\gamma_{t}-\tilde{G}(2\eta_{t})\geq a\gamma_{t}-\tilde{G}(2a),\text{
}\forall a\in \mathbb{R},\text{ a.e., }P\text{-a.s.,}%
\]
which implies $\gamma_{t}=2\tilde{G}^{\prime}(2\eta_{t})$, a.e., $P$-a.s.
\end{proof}

We consider the following linear quadratic control (LQ) problem:%
\[
\left \{
\begin{array}
[c]{rl}%
dX_{t}^{u}= & [A(t)X_{t}^{u}+B(t)u_{t}+b(t)]dt+[C(t)X_{t}^{u}+D(t)u_{t}%
+\sigma(t)]dB_{t},\\
X_{0}^{u}= & x_{0}\in \mathbb{R}^{n},
\end{array}
\right.
\]%
\[
Y_{t}^{u}=\mathbb{\tilde{E}}_{t}\left[  \frac{1}{2}\langle LX_{T}^{u}%
,X_{T}^{u}\rangle+\int_{t}^{T}\{E(s)Y_{s}^{u}+\frac{1}{2}[\langle
Q(s)X_{s}^{u},X_{s}^{u}\rangle+2\langle S(s)X_{s}^{u},u_{s}\rangle+\langle
R(s)u_{s},u_{s}\rangle]\}ds\right]  ,
\]
where $\mathcal{U}[0,T]=M_{G}^{2}(0,T;\mathbb{R}^{m})$ and $A$, $B$, $b$, $C$,
$D$, $\sigma$, $E$, $Q$, $S$, $R$ ($t$ is suppressed for simplicity) are
deterministic functions satisfying the following conditions:

\begin{description}
\item[(A6)] $A$, $C\in L^{\infty}(0,T;\mathbb{R}^{n\times n})$, $B\in$
$L^{\infty}(0,T;\mathbb{R}^{n\times m}),$ $D\in C(0,T;\mathbb{R}^{n\times m}%
)$, $E\in L^{\infty}(0,T;\mathbb{R})$, $Q\in L^{\infty}(0,T;\mathbb{S}_{n})$,
$S\in L^{\infty}(0,T;\mathbb{R}^{m\times n})$, $R\in C(0,T;\mathbb{S}_{m})$,
$L\in \mathbb{S}_{n}$, $b$, $\sigma \in L^{2}(0,T;\mathbb{R}^{n}).$

\item[(A7)] $L\geq0$, $Q-S^{T}R^{-1}S\geq0$, $R\gg0$, i.e., there exists a
$\delta>0$ such that $R\geq \delta I_{m\times m}$.
\end{description}

The cost functional is $J(u):=Y_{0}^{u}$, and the LQ problem is to minimize
$J(u)$ over $\mathcal{U}[0,T]$. The following example shows that the Riccati
equation may not exist.

\begin{example}
\label{ex-12}Suppose that $n=m=d=1$, $\tilde{G}(\cdot)$ is convex and not
positive homogeneity. Consider the following LQ problem:%
\[
\left \{
\begin{array}
[c]{rl}%
dX_{t}^{u}= & X_{t}^{u}dB_{t},\text{ }X_{0}^{u}=x_{0}\in \mathbb{R},\\
Y_{t}^{u}= & \mathbb{\tilde{E}}_{t}\left[  \frac{1}{2}|X_{T}^{u}|^{2}+\int
_{t}^{T}|u_{s}|^{2}ds\right]  .
\end{array}
\right.
\]
It is clear that the optimal control $u^{\ast}=0$. Suppose that $Y_{t}^{\ast
}=\frac{1}{2}P(t)|X_{t}^{\ast}|^{2}$, where $P(\cdot)$ satisfies a Riccati
equation. Applying It\^{o}'s formula to $\frac{1}{2}P(t)|X_{t}^{\ast}|^{2}$,
we can get%
\begin{align*}
&  Y_{t}^{\ast}+\int_{t}^{T}P(s)|X_{s}^{\ast}|^{2}dB_{s}+\frac{1}{2}\int
_{t}^{T}P(s)|X_{s}^{\ast}|^{2}d\langle B\rangle_{s}-\int_{t}^{T}\tilde
{G}(P(s)|X_{s}^{\ast}|^{2})ds\\
&  =\frac{1}{2}|X_{T}^{\ast}|^{2}-\int_{t}^{T}[\frac{1}{2}\dot{P}%
(s)|X_{s}^{\ast}|^{2}+\tilde{G}(P(s)|X_{s}^{\ast}|^{2})]ds.
\end{align*}
By Lemma 5.7 in \cite{HJL}, we have $Y_{t}^{\ast}=\mathbb{\tilde{E}}%
_{t}\left[  \frac{1}{2}|X_{T}^{\ast}|^{2}-\int_{t}^{T}[\frac{1}{2}\dot
{P}(s)|X_{s}^{\ast}|^{2}+\tilde{G}(P(s)|X_{s}^{\ast}|^{2})]ds\right]  $. Thus
we deduce%
\begin{equation}
\frac{1}{2}\dot{P}(s)|X_{s}^{\ast}|^{2}+\tilde{G}(P(s)|X_{s}^{\ast}|^{2})=0.
\label{ne-32}%
\end{equation}
Since $\tilde{G}(\cdot)$ is not positive homogeneity, the relation
(\ref{ne-32}) cannot hold. Thus the Riccati equation does not exist in this case.
\end{example}

In order to obtain the Riccati equation, we need an additional assumption
which will be given in the following analysis. The Hamiltonian $H$ for the
above LQ problem is%
\[%
\begin{array}
[c]{rl}%
H(t,x,y,v,p,q)= & p^{T}(Ax+Bv+b)+q^{T}(Cx+Dv+\sigma)\gamma_{t}+Ey\\
& +\frac{1}{2}[\langle Qx,x\rangle+2\langle Sx,v\rangle+\langle Rv,v\rangle].
\end{array}
\]
Let $u^{\ast}$ be an optimal control. By Theorem \ref{th-9}, there exists a
$P^{\ast}\in \mathcal{P}^{\ast}$ satisfying%
\begin{equation}
B^{T}p_{t}+\gamma_{t}D^{T}q_{t}+SX_{t}^{\ast}+Ru_{t}^{\ast}=0\text{, a.e.,
}P^{\ast}\text{-a.s.,} \label{ne-33}%
\end{equation}
where the adjoint equation under $P^{\ast}$ is%
\[
\left \{
\begin{array}
[c]{rl}%
dp_{t}= & -[A^{T}p_{t}+Ep_{t}+\gamma_{t}C^{T}q_{t}+QX_{t}^{\ast}+S^{T}%
u_{t}^{\ast}]dt+q_{t}dB_{t}+dN_{t},\\
p_{T}= & LX_{T}^{\ast}.
\end{array}
\right.
\]
Assume $(\gamma_{t})_{t\leq T}$ is a deterministic function under $P^{\ast}$
and%
\begin{equation}
p_{t}=P(t)X_{t}^{\ast}+\varphi(t), \label{ne-34}%
\end{equation}
where $P(\cdot)\in C^{1}(0,T;\mathbb{S}_{n})$ and $\varphi(\cdot)\in
C^{1}(0,T;\mathbb{R}^{n})$. Applying It\^{o}'s formula to $P(t)X_{t}^{\ast
}+\varphi(t)$, we deduce%
\begin{equation}
\left \{
\begin{array}
[c]{l}%
q_{t}=PCX_{t}^{\ast}+PDu_{t}^{\ast}+P\sigma,\\
\dot{P}X_{t}^{\ast}+PAX_{t}^{\ast}+PBu_{t}^{\ast}+Pb+\dot{\varphi}+A^{T}%
p_{t}+Ep_{t}+\gamma_{t}C^{T}q_{t}+QX_{t}^{\ast}+S^{T}u_{t}^{\ast}=0.
\end{array}
\right.  \label{ne-35}%
\end{equation}
It follows from (\ref{ne-33})-(\ref{ne-35}) that the optimal control%
\begin{equation}
u_{t}^{\ast}=-(R+\gamma_{t}D^{T}PD)^{-1}[(B^{T}P+S+\gamma_{t}D^{T}%
PC)X_{t}^{\ast}+B^{T}\varphi+\gamma_{t}D^{T}P\sigma] \label{ne-36}%
\end{equation}
and $P(\cdot)$ satisfies the following Riccati equation%
\begin{equation}
\left \{
\begin{array}
[c]{l}%
\dot{P}+PA+A^{T}P+EP+\gamma_{t}C^{T}PC+Q\\
-(B^{T}P+S+\gamma_{t}D^{T}PC)^{T}(R+\gamma_{t}D^{T}PD)^{-1}(B^{T}%
P+S+\gamma_{t}D^{T}PC)=0,\text{ a.e. }t\in \lbrack0,T],\\
P(T)=L,
\end{array}
\right.  \label{ne-37}%
\end{equation}%
\begin{equation}
\left \{
\begin{array}
[c]{l}%
\dot{\varphi}+[A^{T}+EI_{n\times n}-(PB+S^{T}+\gamma_{t}C^{T}PD)(R+\gamma
_{t}D^{T}PD)^{-1}B^{T}]\varphi \\
+\gamma_{t}[C^{T}-(PB+S^{T}+\gamma_{t}C^{T}PD)(R+\gamma_{t}D^{T}PD)^{-1}%
D^{T}]P\sigma+Pb=0,\text{ a.e. }t\in \lbrack0,T],\\
\varphi(T)=0.
\end{array}
\right.  \label{ne-38}%
\end{equation}
By the HJB equation (5.1) in \cite{HJL}, Example \ref{ex-12} and
(\ref{ne-36}), we need the following assumption:%
\begin{equation}
C-D(R+\gamma_{t}D^{T}PD)^{-1}(B^{T}P+S+\gamma_{t}D^{T}PC)=0. \label{ne-39}%
\end{equation}

\begin{theorem}
\label{th-13}Let assumptions (A6)-(A7) hold. If there exists a measurable
deterministic function $(\gamma_{t})_{t\leq T}$ satisfying (\ref{ne-39}) and%
\begin{equation}
\gamma_{t}=2\tilde{G}^{\prime}(\langle P\lambda_{t},\lambda_{t}\rangle),
\label{ne-40}%
\end{equation}
where $\lambda_{t}=\sigma-D(R+\gamma_{t}D^{T}PD)^{-1}(B^{T}\varphi+\gamma
_{t}D^{T}P\sigma)$, $P(\cdot)$ and $\varphi(\cdot)$ are solutions for
(\ref{ne-37}) and (\ref{ne-38}), then $u^{\ast}$ defined in (\ref{ne-36}) is
an optimal control for the LQ problem.
\end{theorem}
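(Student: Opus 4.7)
The strategy is to apply the sufficient condition Theorem~\ref{th-10}, whose requirements are: convexity of $\Phi$ in $x$ and of $H$ in $(x,y,v)$; existence of $P^{\ast}\in\mathcal{P}^{\ast}$; that the adjoint equation (\ref{ne-29}) admits a solution $(p,q,N)$ under $P^{\ast}$; and the first-order inequality $H_v(v-u^{\ast}_t)\geq 0$ on $U$. Convexity is immediate: $\Phi(x)=\frac{1}{2}\langle Lx,x\rangle$ is convex since $L\geq 0$, and $H$ is affine in $y$ while its Hessian in $(x,v)$ is the symmetric block $\bigl(\begin{smallmatrix}Q & S^{T}\\ S & R\end{smallmatrix}\bigr)$, which is positive semidefinite by the Schur complement under (A7) ($R\gg 0$ and $Q-S^{T}R^{-1}S\geq 0$).

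For the adjoint and the first-order condition, I use the ansatz $p_t=P(t)X^{\ast}_t+\varphi(t)$, $q_t=P(CX^{\ast}_t+Du^{\ast}_t+\sigma)$, $N\equiv 0$, where $P,\varphi$ solve the Riccati system (\ref{ne-37})-(\ref{ne-38}). Substituting the defining formula (\ref{ne-36}) for $u^{\ast}$ and invoking hypothesis (\ref{ne-39}) collapses the diffusion coefficient of $X^{\ast}$ to the deterministic $\lambda_t$, so $q_t=P\lambda_t$ is also deterministic. A direct It\^{o} computation, performed under any probability with $d\langle B\rangle_t/dt=\gamma_t$ and using (\ref{ne-37})-(\ref{ne-38}), shows that this $(p,q,N)$ satisfies (\ref{ne-29}); this is precisely the derivation leading to the Riccati equations in the paper, run in reverse. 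Moreover, (\ref{ne-36}) is literally the identity $B^{T}p_t+\gamma_t D^{T}q_t+SX^{\ast}_t+Ru^{\ast}_t=0$, i.e.\ $H_v=0$; since $U=\mathbb{R}^m$ this yields $H_v(v-u^{\ast}_t)=0\geq 0$ for every $v\in U$.

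The main obstacle is producing $P^{\ast}\in\mathcal{P}^{\ast}$ whose quadratic-variation density equals the prescribed $\gamma_t$. Nonemptiness of $\mathcal{P}^{\ast}$ follows from Theorem~\ref{th-re-2} and Proposition~\ref{pro-3}; the real work is to control the quadratic variation, and this is where Proposition~\ref{pr-11} enters. The plan is to express $\xi^{\ast}=\Phi(X^{\ast}_T)+\int_{0}^{T}f(s,X^{\ast}_s,Y^{\ast}_s,u^{\ast}_s)\,ds$ in the form $\xi'+\int_{0}^{T}\eta_s\,d\langle B\rangle_s-\int_{0}^{T}\tilde G(2\eta_s)\,ds$ with $\mathbb{\hat{E}}[\xi']=-\mathbb{\hat{E}}[-\xi']$. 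To extract $\eta$, apply It\^{o} to the quadratic-in-state ansatz $Y^{\ast}_t=\frac{1}{2}\langle P(t)X^{\ast}_t,X^{\ast}_t\rangle+\langle \varphi(t),X^{\ast}_t\rangle+\psi(t)$, with $\psi$ chosen by an ODE so that the terminal value matches $\frac{1}{2}\langle LX^{\ast}_T,X^{\ast}_T\rangle$; using (\ref{ne-37})-(\ref{ne-38}) and the deterministic diffusion $\lambda_t$, the coefficient of $d\langle B\rangle_t$ that emerges is $\frac{1}{2}\langle P(t)\lambda_t,\lambda_t\rangle$, identifying $\eta_t=\frac{1}{2}\langle P(t)\lambda_t,\lambda_t\rangle$. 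Proposition~\ref{pr-11} then forces $\gamma_t=2\tilde G'(2\eta_t)=2\tilde G'(\langle P(t)\lambda_t,\lambda_t\rangle)$, $P^{\ast}$-a.s., which is exactly hypothesis (\ref{ne-40}). With $P^{\ast}$ so identified, every hypothesis of Theorem~\ref{th-10} is verified, and optimality of $u^{\ast}$ follows.
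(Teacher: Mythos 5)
Your proposal is correct and follows essentially the same route as the paper: verify the hypotheses of Theorem \ref{th-10} by using the quadratic ansatz $Y_{t}^{\ast}=\frac{1}{2}\langle P(t)X_{t}^{\ast},X_{t}^{\ast}\rangle+\langle\varphi(t),X_{t}^{\ast}\rangle+l(t)$ together with It\^{o}'s formula to write $\xi^{\ast}$ in the form required by Proposition \ref{pr-11}, which pins down $d\langle B\rangle_{t}=\gamma_{t}dt$ under $P^{\ast}$ via (\ref{ne-40}), and then check $H_{v}=0$ and the convexity of $H$ and $\Phi$. The only differences are cosmetic: you make explicit the Schur-complement argument for convexity and the reverse verification of the adjoint ansatz, which the paper leaves implicit.
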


\begin{proof}
Applying It\^{o}'s formula to $\frac{1}{2}\langle P(t)X_{t}^{\ast},X_{t}%
^{\ast}\rangle+\langle \varphi(t),X_{t}^{\ast}\rangle+l(t)$, where $X^{\ast}$
is the solution of (\ref{ne-2}) corresponding to $u^{\ast}$ and $l(\cdot)$
satisfies the following ODE%
\begin{equation}
\left \{
\begin{array}
[c]{l}%
\dot{l}+El+\langle \varphi,b\rangle-(B^{T}\varphi+\gamma_{t}D^{T}P\sigma
)^{T}(R+\gamma_{t}D^{T}PD)^{-1}B^{T}\varphi+\tilde{G}(\langle P\lambda
_{t},\lambda_{t}\rangle)\\
+\frac{1}{2}(B^{T}\varphi+\gamma_{t}D^{T}P\sigma)^{T}(R+\gamma_{t}%
D^{T}PD)^{-1}R(R+\gamma_{t}D^{T}PD)^{-1}(B^{T}\varphi+\gamma_{t}D^{T}%
P\sigma)=0,\\
l(T)=0,
\end{array}
\right.  \label{ne-41}%
\end{equation}
we can deduce that the solution of (\ref{ne-3}) corresponding to $u^{\ast}$ is
$Y_{t}^{\ast}=\frac{1}{2}\langle P(t)X_{t}^{\ast},X_{t}^{\ast}\rangle
+\langle \varphi(t),X_{t}^{\ast}\rangle+l(t)$ and%
\[
\xi^{\ast}=Y_{0}^{\ast}+\int_{0}^{T}\langle P(t)X_{t}^{\ast},\lambda
_{t}\rangle dB_{t}+\int_{0}^{T}\frac{1}{2}\langle P\lambda_{t},\lambda
_{t}\rangle d\langle B\rangle_{t}-\int_{0}^{T}\tilde{G}(\langle P\lambda
_{t},\lambda_{t}\rangle)dt.
\]
By Proposition \ref{pr-11}, we obtain $d\langle B\rangle_{t}=\gamma_{t}dt$
under $P^{\ast}$. It is easy to verify that
\[
H_{v}(t,X_{t}^{\ast},Y_{t}^{\ast},u_{t}^{\ast},p_{t},q_{t})=0\text{, a.e.,
}P^{\ast}\text{-a.s.}%
\]
Since $H(\cdot)$ is convex with respect to $x$, $y$, $v$ and $\Phi(\cdot)$ is
convex with respect to $x$, we obtain that $u^{\ast}$ is an optimal control by
Theorem \ref{th-10}.
\end{proof}

In the following examples, we can find $\gamma_{t}$.

\begin{example}
Suppose that $\sigma(\cdot)=B(\cdot)=0$. Then $\lambda_{t}=0$ and $\gamma
_{t}=2\tilde{G}^{\prime}(0)$ for $t\in \lbrack0,T]$.
\end{example}

\begin{example}
Suppose that $n=m=1$ and $G(a)=\frac{1}{2}(\bar{\sigma}^{2}a^{+}%
-\underline{\sigma}^{2}a^{-})$ for $a\in \mathbb{R}$. Since $\tilde{G}%
(a_{1})-\tilde{G}(a_{2})\leq G(a_{1}-a_{2})$ for each $a_{1}$, $a_{2}%
\in \mathbb{R}$, we have $2\tilde{G}^{\prime}(\cdot)\in \lbrack \underline
{\sigma}^{2},\bar{\sigma}^{2}]$. Under condition (\ref{ne-39}) and $n=m=1$, it
is easy to verify that the equations for $P(\cdot)$ and $\varphi(\cdot)$ are
independent of $\gamma_{t}$. Thus we can obtain $P(\cdot)$ and $\varphi
(\cdot)$ first. For each fixed $t\in \lbrack0,T]$, $\rho(\gamma_{t}%
):=2\tilde{G}^{\prime}(\langle P\lambda_{t},\lambda_{t}\rangle)-\gamma_{t}$ is
a continuous function with $\rho(\bar{\sigma}^{2})\leq0$ and $\rho
(\underline{\sigma}^{2})\geq0$. Then we get that $\{ \gamma_{t}\in
\lbrack \underline{\sigma}^{2},\bar{\sigma}^{2}]:\rho(\gamma_{t})=0\}$ is a
nonempty closed set. By measurable selection theorem, there exists a
measurable deterministic function $(\gamma_{t})_{t\leq T}$ satisfying
(\ref{ne-40}).
\end{example}

\end{document}